\documentclass{amsart}

\usepackage{mathptmx}
\usepackage[T1]{fontenc}
\usepackage{amsmath,amsthm,amsfonts,amssymb}
\usepackage{xcolor}
\usepackage{bbm}

\allowdisplaybreaks

\newtheorem{theorem}{Theorem}
\numberwithin{theorem}{section}
\newtheorem{corollary}[theorem]{Corollary}
\newtheorem{lemma}[theorem]{Lemma}
\newtheorem{proposition}[theorem]{Proposition}
\theoremstyle{remark}
\newtheorem{remark}[theorem]{Remark}

\usepackage{hyperref}
\hypersetup{
	colorlinks=true,
	linktocpage=true,
	linkcolor=[RGB]{161,1,49},
	citecolor=[RGB]{10,136,169},      
	urlcolor=cyan,
}

\newcommand{\R}{\mathbb{R}}
\newcommand{\Z}{\mathbb{Z}}
\newcommand{\N}{\mathbb{N}}

\newcommand{\annil}[1]{{#1}^\perp}
\newcommand{\preannil}[1]{{}^\perp{#1}}
\newcommand{\wklim}{\operatorname{w-}\lim}
\newcommand{\wkslim}{\operatorname{w*-}\lim}

\newcommand{\spanning}{\operatorname{span}}
\newcommand{\wkscspanning}{\overline{\spanning}^{*}}

\newcommand{\acts}{\curvearrowright}

\newcommand{\ran}{\operatorname{ran}}
\newcommand{\Fix}{\operatorname{Fix}}
\newcommand{\Cob}{\operatorname{Cob}}
\newcommand{\wCob}{{\overline{\Cob}}}
\newcommand{\wsCob}{\overline{\Cob}^{*}}

\newcommand{\FSet}{\mathcal{F}(S)}

\newcommand{\conv}{\operatorname{conv}}
\newcommand{\cconv}{\overline{\operatorname{conv}}}
\newcommand{\stcconv}{\overline{\operatorname{conv}}^*}
\newcommand{\ext}{\operatorname{ext}}

\title[Continuous pointwise ergodicity for semigroup actions on locally compact spaces]{Continuous pointwise ergodicity for semigroup actions on locally compact spaces}

\author{Raimundo Briceño}
\address{Facultad de Matem\'aticas, Pontificia Universidad Cat\'olica de Chile. Santiago, Chile}
\email{raimundo.briceno@uc.cl}
\urladdr{http://www.mat.uc.cl/~raimundo.briceno}

\author{Godofredo Iommi}
\address{Facultad de Matem\'aticas, Pontificia Universidad Cat\'olica de Chile. Santiago, Chile}
\email{godofredo.iommi@gmail.com}
\urladdr{http://www.mat.uc.cl/~giommi}

\subjclass[2020]{Primary 37B05, 37A30, 47A35; Secondary 43A07, 46E15, 54H15}
\date{}
\keywords{Proper semigroup action; continuous pointwise ergodicity;
mean ergodicity; F{\o}lner averages; locally compact dynamical system}
\thanks{R.B. was partially supported by ANID/FONDECYT Regular 1240508. G.I. was partially supported by ANID/FONDECYT Regular 1230100. Both authors were partially supported by Avanza UC AV25085.}

\begin{document}

\begin{abstract}
We investigate proper actions of arbitrary semigroups on separable locally compact metric spaces, where point orbits are allowed to escape to infinity. An action is pointwise uniquely ergodic when every compact orbit closure supports exactly one invariant probability measure and non-compact orbit closures support none. The associated ergodic map therefore assigns the selected probability measure to non-escaping points and the zero subprobability to escaping ones.

Under the hypothesis that compact orbit closures admit at least one invariant measure, we establish that the weak* continuity of this ergodic map together with a vanishing at infinity condition is equivalent to the mean ergodicity of the Koopman representation on the space of continuous functions vanishing at infinity. In consequence, every such function and every finite signed measure split uniquely into invariant components and limits of coboundaries. The corresponding projections are obtained by integration against the ergodic map. Because this operator-theoretic characterization avoids explicit averaging schemes, it remains applicable even to semigroups without Følner sequences.

When restricted to countable, discrete, bicancellative, and left amenable semigroups, these properties are shown to be equivalent to the uniform convergence of Følner averages and the weak-star continuity of their dual limits, extending classical results for group actions on compact spaces. Furthermore, we identify the space of ergodic measures with a compactified ergodic quotient, prove that the invariant measure simplex is Bauer, and show that these structural properties descend through proper factor maps. The theoretical framework is complemented by dynamical examples, including a continuously pointwise ergodic subshift that exhibits discontinuous entropy along the ergodic map.
\end{abstract}

\maketitle
\setcounter{tocdepth}{3}
\tableofcontents

\section{Introduction}

Unique ergodicity provides one of the clearest settings in which measure-theoretic and topological dynamics agree. Let $T\colon X\to X$ be a continuous map of a compact metric space. A classical theorem of Oxtoby \cite{oxtoby} states that $T$ has a unique invariant probability measure if and only if, for every $f\in C(X)$,
\[
\frac{1}{n}\sum_{k=0}^{n-1}f(T^kx)
\]
converge uniformly in $x \in X$ to a constant. That constant is then the integral of $f$ against the unique invariant measure, so every point is generic for it.

A natural broader question is what happens when every point is generic for some ergodic invariant measure, which may depend on the point itself. Systems with this property were studied by Dowker and Lederer \cite{dowker-lederer}, and Katznelson and Weiss \cite{katznelson1981}. More recently, Downarowicz and Weiss \cite{downarowicz2020} established several results in the context of compact $\Z$-systems. Related continuity and uniformity questions also arise in the study of uniformity, mean equicontinuity, and continuity properties of F{\o}lner averages. See \cite{hansel1974,cai2022,fuhrmann2022,xu2024,fuhrmann2025} and the references therein.

Motivated by this phenomenon, we seek an intrinsic formulation that does not presuppose an averaging scheme. While equivalent in the abelian setting, the condition that every point is generic for an ergodic measure must generally be distinguished from the requirement that every orbit closure supports exactly one invariant probability measure (see \cite{fuhrmann2025}). We adopt the second property as our foundational concept, which unambiguously assigns a measure to each point based solely on orbit closures rather than specific averaging procedures. An advantage is that this formulation naturally extends to arbitrary semigroups.

Let a semigroup $S$ act by continuous maps on a compact metric space $X$. We say the action is \emph{pointwise uniquely ergodic} if every orbit closure $\operatorname{cl}_X(Sx)$ supports a unique $S$-invariant probability measure, denoted by $\Phi(x)$. This measure is necessarily ergodic, and its uniqueness implies that for every $s \in S$ and $x\in X$ we have $\Phi(sx)=\Phi(x)$. Furthermore, the action is \emph{continuously pointwise ergodic} if the map
\[
\Phi\colon X\longrightarrow M_1^+(X,S)
\]
is weak* continuous, where $M_1^+(X,S)$ denotes the space of $S$-invariant probability measures on $X$. In such a system, while there may be many ergodic invariant measures overall, the specific measure selected by an orbit closure varies continuously with the initial point.

This formulation extends naturally to locally compact spaces. Suppose that $S$ acts by proper maps on a separable locally compact metric space $X$. We say a point $x$ \emph{escapes to infinity} if its orbit closure $\operatorname{cl}_X(Sx)$ is non-compact. In this setting, pointwise unique ergodicity means that every compact orbit closure supports a unique invariant probability measure, while every non-compact orbit closure supports none. The pointwise ergodic map $\Phi$ then takes values in the space of invariant subprobability measures, given by
\[
\Phi(x)=
\begin{cases}
\mu_x, & \text{if } \operatorname{cl}_X(Sx) \text{ is compact},\\
0,     & \text{if } \operatorname{cl}_X(Sx) \text{ is non-compact}.
\end{cases}
\]
As before, the action is \emph{continuously pointwise ergodic} if this map is weak* continuous. For our results in the locally compact setting, we additionally require that $\Phi$ \emph{vanish at infinity}, meaning that the assignment
\[
x\longmapsto\int_X f\,d\Phi(x)
\]
belongs to $C_0(X)$ for every $f\in C_0(X)$, where $C_0(X)$ is the space of continuous functions vanishing at infinity.

Our main objective is to characterize continuous pointwise ergodicity in operator theoretic and measure theoretic terms, both in the compact and the locally compact settings. In previous work, this property has been closely tied to explicit averaging schemes. For compact $\mathbb{Z}$-systems, Downarowicz and Weiss showed that continuous pointwise ergodicity is equivalent to the uniform convergence of ergodic averages \cite{downarowicz2020}. Related extensions and operator convergence formulations for countable discrete amenable groups and more general amenable group actions appear in \cite{xu2024,fuhrmann2025}. 

The present work separates this averaging phenomenon from its underlying operator theoretic structure (see \cite{eisner2015} for an overview on this approach). We demonstrate that the fundamental property is instead the direct sum mean ergodic decomposition of the Koopman representation.  To make this precise, let $\kappa$ be the Koopman representation on $C_0(X)$, let $\Fix(\kappa)$ denote the subspace of its invariant functions, and let $\wCob(\kappa)$ be the norm closure of the linear span of its coboundaries. We say that the action is \emph{mean ergodic} if
\[
C_0(X)=\Fix(\kappa)\oplus\wCob(\kappa).
\]
On the dual space $M(X)=C_0(X)^*$, the adjoint representation $\kappa^*$ is the pushforward representation. Its fixed space is $\Fix(\kappa^*)=M(X,S)$, where $M(X,S)$ denotes the weak* closed space of finite signed $S$-invariant measures, and we write $\wsCob(\kappa^*)$ for the weak* closure of the dual coboundaries. Because this mean ergodic characterization is independent of any averaging scheme, it remains meaningful for semigroups that admit no F{\o}lner sequence. We therefore pass from the previously studied group settings to arbitrary semigroups.

Our first main result concerns compact systems. Assume that every orbit closure supports at least one invariant probability measure. Theorem~\ref{thm:main1} establishes that continuous pointwise ergodicity is equivalent to
\[
C(X)=\Fix(\kappa)\oplus\wCob(\kappa)
\]
and, by duality, to
\[
M(X)=M(X,S)\oplus\wsCob(\kappa^*).
\]
Under these conditions, the mean ergodic projection and its adjoint are given by
\[
(Pf)(x)=\int_X f\,d\Phi(x),
\qquad
P^*\nu=\int_X\Phi(x)\,d\nu(x),
\]
so in particular $P^*\delta_x=\Phi(x)$. The auxiliary measure-valued operator result in Theorem~\ref{thm:psi-general} allows us to construct the preceding projections and identify them as adjoints. We impose no amenability, cancellativity, or countability assumptions on $S$. The existence hypothesis for individual orbits is nevertheless essential. Indeed, in \S\ref{subsec:finite-example}, we provide a finite action for which the mean ergodic decomposition holds and the full system admits a globally unique invariant probability measure, yet some orbit closures support none. 

We obtain the locally compact result via one-point compactification. Let $\widehat X$ denote the one-point compactification of $X$, and let $\infty$ be the added point. Properness ensures that the action extends continuously to $\widehat X$, with $\infty$ fixed by $S$. Under the canonical correspondence between subprobability measures on $X$ and probability measures on $\widehat X$, the zero measure corresponds to the Dirac measure $\delta_\infty$. Thus, the pointwise ergodic map extends to a map
\[
\Phi_\infty\colon\widehat X\longrightarrow M_1^+(\widehat X,S)
\]
for the compactified system. The weak* continuity of $\Phi$, combined with the condition that it vanishes at infinity, is precisely equivalent to the weak* continuity of $\Phi_\infty$. Theorem~\ref{thm:locally-compact-main} therefore generalizes the compact characterization, showing that continuous pointwise ergodicity with vanishing at infinity is equivalent to the corresponding mean ergodic decompositions on $C_0(X)$ and $M(X)$.

Amenability is required only when the mean ergodic projection is realized through concrete averages. Suppose $S$ is a countable, discrete, and bicancellative semigroup that admits a left F{\o}lner sequence. For any non-empty finite set $F\subseteq S$, we define the average
\[
A_F:=\frac{1}{|F|}\sum_{s\in F}\kappa(s).
\]
We say the action is \emph{uniform} if, along every left F{\o}lner sequence $(F_n)_n$, the averages $A_{F_n}f$ converge in the uniform norm for all $f\in C_0(X)$. Furthermore, we call the action \emph{weak* mean continuous} if the corresponding dual limits exist on subprobabilities and depend weak* continuously on the initial measure.

In this setting, a Krylov--Bogolyubov type argument guarantees the existence of invariant probability measures on compact orbit closures. Theorem~\ref{thm:main2} establishes that continuous pointwise ergodicity with vanishing at infinity, mean ergodicity, uniformity, the dual mean ergodic decomposition, and weak* mean continuity are all equivalent properties. Moreover, along any left F{\o}lner sequence, we have
\[
A_{F_n}f\longrightarrow Pf \quad\text{in }\|\cdot\|_\infty, \qquad A_{F_n}^*\nu\longrightarrow P^*\nu \quad\text{weak*}.
\]
Therefore,  the averaging characterizations previously established for group actions extend to this semigroup setting, while the underlying structural equivalence remains valid for arbitrary semigroups.

Continuous pointwise ergodicity also admits a natural quotient interpretation. The fibers of $\Phi_\infty$ induce the compactified ergodic quotient
\[
\pi_\infty\colon\widehat X\longrightarrow \widehat X/\Phi_\infty.
\]
Theorem~\ref{thm:main-quotient} identifies this quotient space with $E_1^+(\widehat X,S)$, the space of ergodic invariant probability measures. Thus,  $M_1^+(\widehat X,S)$ is a Bauer simplex, and the pushforward by $\Phi_\infty$ yields the ergodic decomposition of any invariant probability measure. Letting $\kappa_\infty$ denote the Koopman representation of the compactified action on $C(\widehat X)$, this quotient also characterizes the operator-theoretic decomposition:
\[
\pi_\infty^*C(\widehat X/\Phi_\infty) = \Fix(\kappa_\infty), \qquad \ker((\pi_\infty)_*) = \wsCob(\kappa_\infty^*).
\]

We also investigate proper factor maps. Let $\pi\colon X\to Y$ be a proper factor map, and let $\Phi_X$ and $\Phi_Y$ denote the respective pointwise ergodic maps of the actions on $X$ and $Y$. For countable, discrete, bicancellative, and left amenable semigroups, pointwise unique ergodicity descends from $X$ to $Y$. Continuous pointwise ergodicity, vanishing at infinity, and uniformity descend as well. Moreover, Proposition~\ref{prop:pue-descends-proper-factors} establishes the natural identity
\[
\Phi_Y\circ\pi=\pi_*\circ\Phi_X,
\]
where $\pi_*$ denotes the pushforward of measures. Finally, in Remark~\ref{remark:proper_factors} we provide examples demonstrating that both properness and amenability are essential hypotheses.

The final section illustrates the various phenomena that arise within the theory. We first analyze an interval map that distinguishes pointwise unique ergodicity from continuous pointwise ergodicity, motivating a three-step surgery via invariant deletions. Next, orientation-preserving circle homeomorphisms demonstrate that pointwise unique ergodicity can depend on whether the acting semigroup is $\mathbb{N}$ or $\mathbb{Z}$. Furthermore, rigid rotations provide a setting where the ergodic quotient, the mean ergodic projection, and the invariant and coboundary spaces can be computed explicitly. Translation actions are used to model the complete escape of mass, while a subsequent finite example demonstrates that the existence hypothesis for individual orbits is indispensable. We conclude by constructing a continuously pointwise ergodic subshift whose entropy is discontinuous along the ergodic map, and by relating the dimension of the ergodic quotient to topological emergence.

The paper is organized as follows. Section~\ref{sec:preliminaries} introduces the function and measure spaces, Koopman and pushforward representations, coboundaries, and an auxiliary operator result for measure-valued maps. Section~\ref{sec:compact-case} establishes the main characterizations of continuous pointwise ergodicity for compact systems, while Section~\ref{sec:locally-compact-case} extends them to locally compact spaces using the one-point compactification. Section~\ref{section5} studies F{\o}lner averages for left amenable semigroup actions and their relation to mean ergodicity and weak* mean continuity. Section~\ref{sec:quotients-factors} develops the ergodic quotient, ergodic decompositions, and proper factor maps. Finally, Section~\ref{sec:examples-applications} presents examples and applications illustrating the scope and limitations of the preceding results.

\section{Preliminaries}
\label{sec:preliminaries}

\subsection{Spaces of functions and measures}

Let $X$ be a separable locally compact metric space and denote by $\mathcal{X}$ its corresponding Borel $\sigma$-algebra. Let $C(X)$ be the space of real-valued continuous functions $f\colon X \to \R$, and denote the Banach space of bounded continuous functions on $X$ by
\[
C_b(X):=\{f\in C(X)\mid \|f\|_\infty < \infty\},
\]
where $\|f\|_\infty = \sup_{x \in X}|f(x)|$. A function $f \in C_b(X)$ is said to \textbf{vanish at infinity} if, for every
$\epsilon>0$, the set $\{x\in X\mid |f(x)|\geq \epsilon\}$ is compact in $X$. Consider the closed and separable subspace
\[
C_0(X):=\{f\in C_b(X)\mid f\text{ vanishes at infinity}\}.
\]
If $X$ is compact, then $C_0(X)=C(X)$.

By the Riesz--Markov--Kakutani theorem, the continuous dual space $C_0(X)^*$ of $C_0(X)$ is isometrically isomorphic to $(M(X),\|\cdot\|_{\operatorname{TV}})$. Here, $M(X)$ denotes the space of finite signed regular Borel measures on $X$, and $\|\cdot\|_{\operatorname{TV}}$ is the total variation norm defined by $\|\nu\|_{\operatorname{TV}} = |\nu|(X)$, where $|\nu| = \nu^+ + \nu^-$ and $\nu = \nu^+-\nu^-$ is the Jordan decomposition of $\nu\colon \mathcal{X} \to \R$. To emphasize the measure-theoretic structure of the dual space $M(X)$, we denote the duality pairing by
\[
\langle f,\nu \rangle := \int f \, d\nu \quad \text{for}~f \in C_0(X),~\nu \in M(X).
\]

The \textbf{support} of a measure $\nu \in M(X)$, denoted by $\operatorname{supp}(\nu)$, is defined as the (closed) set of all $x \in X$ such that $|\nu|(U) > 0$ for every open neighborhood $U$ of $x$. Notice that $\operatorname{supp}(\nu) = \operatorname{supp}(\nu^+) \cup \operatorname{supp}(\nu^-)$.

Let $M^+(X)$ denote the \textbf{cone of non-negative measures}, defined by
\[
M^+(X) := \{ \nu \in M(X) \mid \nu(A) \geq 0~\text{for all}~A \in \mathcal{X}\}.
\]
The set of \textbf{probability measures} $M^+_1(X)$ is defined by
\[
M^+_1(X) := \{ \nu \in M^+(X) \mid \nu(X) = 1\},
\]
and the set of \textbf{subprobability measures} $M^+_{\leq 1}(X)$ is defined by
\[
M^+_{\leq 1}(X):=\{\nu\in M^+(X)\mid \nu(X)\leq 1\}.
\]
By the Jordan decomposition theorem, $M(X) = M^+(X) - M^+(X) = \spanning(M^+_1(X))$. 

The space $M(X)$ will usually be endowed with the weak* topology $\sigma(M(X),C_0(X))$. Thus, a net $(\nu_i)_i$ converges weak* to $\nu$ if and only if
\[
\int f\,d\nu_i\to \int f\,d\nu\qquad \text{for all } f\in C_0(X).
\]
In particular, the same characterization holds for sequences. We denote sequential weak* convergence by $\wkslim_n \nu_n=\nu$. This topology is weaker than the topology induced by $\|\cdot\|_{\operatorname{TV}}$, and because $X$ is separable, locally compact, and metrizable, the subspace $M^+_{\leq 1}(X)$ is compact and metrizable.

For each $x \in X$, let $\delta_x\colon C_0(X) \to \R$ be defined by $\delta_x(f) = f(x)$. Via the identification of $C_0(X)^*$ with $M(X)$, $\delta_x$ corresponds to the Dirac delta measure at $x$ and belongs to $M^+_1(X)$. Moreover, the map
\[
\iota_X\colon X \longrightarrow M^+_{\leq 1}(X), \qquad x \longmapsto \iota_X(x) := \delta_x,
\]
is an embedding, which we refer to as the \textbf{Dirac embedding}, and
\[
\stcconv(\iota_X(X)) = 
\begin{cases}
M^+_1(X)            &   \text{if $X$ is compact},    \\
M^+_{\leq 1}(X)     &   \text{otherwise},
\end{cases}
\]
where $\stcconv$ denotes the weak* closure of the convex hull. Observe that, in the non-compact case, the weak* topology induced by $C_0(X)$ does not see mass escaping to infinity and weak* limits of probability measures may be subprobability measures.

\subsection{Actions by proper maps}

For a locally compact metric space $Y$, a continuous map $F\colon X\to Y$ is said to be \textbf{proper} if the preimage of every compact subset of $Y$ is compact in $X$. Note that if $X$ is compact, or if $F$ is a homeomorphism, then $F$ is necessarily proper.

Let $S$ be a semigroup. An \textbf{action by proper maps} of $S$ on $X$, denoted by $S \acts X$, is a mapping $(s, x) \mapsto sx$ from $S \times X$ to $X$ such that for each fixed $s \in S$, the transformation $x \mapsto sx$ is continuous and proper, and the composition rule $(st)x = s(tx)$ holds for all $s, t \in S$ and $x \in X$. If $X$ is compact, or if $S$ is a group acting by homeomorphisms, then every action by continuous maps is an action by proper maps.

Given $x \in X$, denote by $Sx = \{sx \mid s \in S\}$ the \textbf{$S$-orbit} of $x$ and by $\operatorname{cl}_X(Sx)$ its closure inside $X$. A subset $A \subseteq X$ is \textbf{(forward) $S$-invariant} if $sA \subseteq A$ for all $s \in S$. If $A\subseteq X$ is $S$-invariant, then the restricted action on $A$, $S \acts A$, is well-defined. When $A$ is closed, it is again an action by proper maps on a locally compact metric space, and in this case we call it a \textbf{subsystem}. In particular, $S \acts \operatorname{cl}_X(Sx)$ is a subsystem for every $x$.

The \textbf{pushforward} of a measure $\nu \in M(X)$ via $s \in S$ is the measure $s_*\nu$ defined by
\[
s_*\nu(A) := \nu(s^{-1}A) \quad \text{for all}~A \in \mathcal{X}.
\]
A measure $\nu \in M(X)$ is called \textbf{$S$-invariant} if $s_*\nu = \nu$ for all $s \in S$. Denote by $M(X,S)$ the weak* closed subspace of $S$-invariant measures and by $M^+_1(X, S) := M^+_1(X) \cap M(X, S)$ and $M^+_{\leq 1}(X, S) := M^+_{\leq 1}(X) \cap M(X, S)$ the sets of $S$-invariant probability and subprobability measures, respectively. As for general measures, the Jordan decomposition theorem implies that $M(X,S) = M^+(X,S) - M^+(X,S) = \spanning(M^+_1(X,S))$, where $M^+(X,S) = M^+(X) \cap M(X, S)$. Indeed, if $\nu\in M(X,S)$, then $s_*|\nu|=|\nu|$ for every $s\in S$, since
$|s_*\nu|\leq s_*|\nu|$ and both measures have the same total mass. Hence the Jordan parts $\nu^+$ and $\nu^-$ are $S$-invariant.

Fix an $S$-invariant subprobability measure $\mu \in M^+_{\leq 1}(X, S)$. A set $A \in \mathcal{X}$ is \textbf{essentially $S$-invariant} with respect to $\mu$ if $\mu(A \Delta s^{-1}A) = 0$ for all $s \in S$, and the measure $\mu$ is called \textbf{$S$-ergodic} if every essentially $S$-invariant set $A$ is trivial, meaning $\mu(A) \in \{0,1\}$. In particular, the zero measure $0$ is $S$-ergodic. With this convention, if $\mu\in M^+_{\leq 1}(X,S)$ is $S$-ergodic and $\mu\neq 0$, then $\mu(X)=1$, since $X$ itself is essentially $S$-invariant. We denote by $E^+_1(X,S)$ the set of $S$-ergodic invariant probability measures on $X$. Note that if $\mu$ is $S$-invariant, then $\operatorname{supp}(\mu)$ is both $S$-invariant and essentially $S$-invariant with respect to $\mu$. In particular, if $A \subseteq X$ is closed and $S$-invariant, and $\operatorname{supp}(\mu) \subseteq A$, there is a natural identification between $\{\mu \in M^+_{\leq 1}(X, S) \mid \operatorname{supp}(\mu) \subseteq A\}$ and $M^+_{\leq 1}(A, S)$.

If $Y$ is compact Hausdorff and $S$ acts on $Y$ by continuous maps, then, whenever non-empty, $M^+_1(Y,S)$ is a Choquet simplex
\cite[Lemma~4.3]{converse1969}, and its extreme points are precisely the $S$-ergodic probability measures \cite[Proposition~12.4]{phelps2001}.

\subsection{Linear operators and projections}
\label{sec:linear-operator}

Let $T\colon C_0(X) \to C_0(X)$ be a bounded (or, equivalently, norm continuous) linear operator. The \textbf{range} and \textbf{kernel} of $T$ are the subspaces
\[
\ran(T) := \{Tf \mid f \in C_0(X)\}  \qquad \text{and} \qquad \ker(T) := \{f \in C_0(X) \mid Tf = 0\}.
\]
The kernel is norm closed, while the range need not be closed in general. The \textbf{adjoint} of $T$ is the bounded linear operator $T^*\colon M(X) \to M(X)$ characterized by
\[
\int f \,d(T^*\nu) = \int (Tf) \, d\nu \qquad \text{for all}~f \in C_0(X),~\nu \in M(X).
\]
The adjoint $T^*$ is always \textbf{weak*-to-weak* continuous}, that is, $T^*$ is continuous when $M(X)$ is endowed with the weak* topology. Conversely, every weak*-to-weak* continuous linear operator from $M(X)$ to $M(X)$ is the adjoint of some bounded linear operator $T\colon C_0(X) \to C_0(X)$ (see \cite[Theorem~3.1.11]{megginson2012}).

The \textbf{annihilator} of a subspace $U \subseteq C_0(X)$ is the subspace
\[
\annil{U} := \left\{\nu \in M(X) \, \middle| \, \int f \, d\nu = 0~\text{for all}~f \in U\right\}, 
\]
and the \textbf{pre-annihilator} of a subspace $N \subseteq M(X)$ is the subspace
\[
\preannil{N} := \left\{f \in C_0(X) \, \middle| \, \int{f} \, d\nu = 0~\text{for all}~\nu \in N \right\}.
\]
It is well known that $\preannil{(\annil{U})}$ is the norm-closure of $U$, and $\annil{(\preannil{N})}$ is the weak* closure of $N$ \cite[p. 96]{rudin1991}. The kernel and range of the adjoint $T^*$ are the subspaces
\[
\ran(T^*) := \{T^*\nu \mid \nu \in M(X)\}  \qquad \text{and} \qquad \ker(T^*) := \{\nu \in M(X) \mid T^*\nu = 0\}.
\]
The kernel is weak* closed, while the range need not be weak* closed in general. They satisfy (see \cite[Theorem~4.12]{rudin1991})
\[
\ker(T^*) = \annil{\ran(T)} \qquad \text{and} \qquad \ker(T) = \preannil{\ran(T^*)}.
\]

A \textbf{linear projection} is an idempotent linear operator. If $P\colon C_0(X) \to C_0(X)$ is a norm continuous linear projection, then
\[
C_0(X) = \ran(P) \oplus \ker(P),
\]
that is, $C_0(X) = \ran(P) + \ker(P)$ and $\ran(P) \cap \ker(P)=\{0\}$ (see \cite[\S~5.15]{rudin1991}). In this case, $P^*$ is also a linear projection, and
\[
M(X) = \ran(P^*) \oplus \ker(P^*).
\]

A linear projection $Q\colon M(X) \to M(X)$ is weak*-to-weak* continuous if and only if $Q=P^*$ for some norm continuous linear projection $P\colon C_0(X) \to C_0(X)$. This condition is equivalent to requiring that both $\ran(Q)$ and $\ker(Q)$ be weak* closed subspaces of $M(X)$ (see \cite[Exercise 5.18]{fabian2001}).

\subsection{Koopman representation}

The \textbf{opposite semigroup} $S^{\operatorname{op}}$ of $(S, \cdot)$ is defined as $(S,\star)$, where $s \star t := t \cdot s$ for all $s, t \in S$. Given a Banach space $V$, denote by $\mathcal{B}(V)$ the space of bounded linear operators from $V$ to $V$. The \textbf{Koopman representation} associated to an action by proper maps $S \acts X$ is the linear representation
\[
\kappa\colon S^{\operatorname{op}} \longrightarrow \mathcal{B}(C_0(X))
\]
defined by
\[
(\kappa(s)f)(x) = f(sx) \qquad \text{for all}~s \in S,~f \in C_0(X),~x \in X.
\]
Note that $\kappa$ is well-defined because the function $\kappa(s)f$ always belongs to $C_0(X)$. Indeed, $x \mapsto f(sx)$ is continuous since it is a composition of continuous maps. Furthermore, for any $\epsilon > 0$, the set $\{x\in X\mid |f(sx)|\geq \epsilon\}$ is compact, as it is the preimage of the compact set $\{y\in X\mid |f(y)|\geq \epsilon\}$ under the proper map $x \mapsto sx$. Clearly, $\kappa$ is \textbf{positive} (if $f \geq 0$, then $\kappa(s)f \geq 0$), \textbf{multiplicative} ($\kappa(s)(fg) = (\kappa(s)f)(\kappa(s)g)$), and \textbf{non-expansive} ($\|\kappa(s)f\|_\infty \leq \|f\|_\infty$). If $X$ is compact, then $\mathbbm{1}\in C_0(X)=C(X)$, so $\kappa$ is also \textbf{unital} ($\kappa(s)\mathbbm{1} = \mathbbm{1}$). In general, however, $\mathbbm{1}\notin C_0(X)$ when $X$ is non-compact.

The subspace of \textbf{$\kappa$-invariant functions} is defined as
\[
\Fix(\kappa) := \left\{f \in C_0(X) \mid \kappa(s)f = f~\text{for all}~s \in S\right\},
\]
and the subspace of \textbf{$\kappa$-coboundaries} is defined as
\[
\Cob(\kappa) := \operatorname{span}\{f - \kappa(s)f \mid f \in C_0(X),~s \in S\}.
\]
Note that $\Fix(\kappa)$ and $\Cob(\kappa)$ are $\kappa$-invariant subspaces of $C_0(X)$. Furthermore, $\Fix(\kappa)$ is closed in the norm topology. We denote by $\wCob(\kappa)$ the norm closure of the space of $\kappa$-coboundaries, which is also $\kappa$-invariant. We say that $S \acts X$ is \textbf{mean ergodic} if
\[
C_0(X) = \Fix(\kappa) \oplus \wCob(\kappa).
\]
Since both $\Fix(\kappa)$ and $\wCob(\kappa)$ are norm closed subspaces, by \cite[\S~10.3, Theorem~3]{kadets2018} there exists a norm continuous linear projection $P\colon C_0(X) \to C_0(X)$ such that
\[
\ran(P) = \Fix(\kappa) \qquad \text{and} \qquad \ker(P) = \wCob(\kappa).
\] 
The map $P$ will be called the \textbf{mean ergodic projection}.

The \textbf{dual Koopman representation} is the dual linear representation
\[
\kappa^*\colon S \longrightarrow \mathcal{B}(M(X))
\]
defined by
\[
\kappa^*(s)\nu = s_*\nu \qquad \text{for}~s \in S,~\nu \in M(X),
\]
so
\[
(\kappa^*(s)\nu)(A) = \nu(s^{-1}A) \qquad \text{for every}~A \in \mathcal{X}. 
\]
As a dual linear representation, $\kappa^*$ is characterized by
\[
\langle f,\kappa^*(s)\nu\rangle = \langle \kappa(s)f,\nu\rangle \qquad \text{for}~f\in C_0(X),~\nu\in M(X),~s\in S.
\]
Note that $\kappa^*(s)$ is the adjoint of $\kappa(s)$ for every $s \in S$, that is, $\kappa^*(s) = \kappa(s)^*$. The subspace of \textbf{$\kappa^*$-invariant measures} is defined as
\[
\Fix(\kappa^*) := \left\{\nu \in M(X) \mid \kappa^*(s)\nu = \nu~\text{for all}~s \in S\right\},
\]
and the subspace of \textbf{$\kappa^*$-coboundaries} is defined as
\[
\Cob(\kappa^*) := \spanning\{\nu - \kappa^*(s)\nu \mid \nu \in M(X),~s \in S\}.
\]
We denote by $\wsCob(\kappa^*)$ the weak* closure of the space of $\kappa^*$-coboundaries. Note that $\Fix(\kappa^*)$ equals $M(X,S)$, the (weak* closed) subspace of $S$-invariant measures.

\begin{proposition}
\label{prop:ann-cob-koopman}
Let $S$ be a semigroup, $X$ a separable locally compact metric space, and $S \acts X$ an action by proper maps. For the Koopman representation $\kappa\colon S^{\operatorname{op}} \to \mathcal{B}(C_0(X))$ and the dual Koopman representation $\kappa^*\colon S \to \mathcal{B}(M(X))$, we have
\[
\preannil{\Cob(\kappa^*)} = \Fix(\kappa) \qquad \text{and} \qquad \Fix(\kappa^*) = \annil{\Cob(\kappa)},
\]
and, consequently,
\[
\wsCob(\kappa^*) = \annil{\Fix(\kappa)} \qquad \text{and} \qquad \preannil{\Fix(\kappa^*)} = \wCob(\kappa).
\]
In particular, $C_0(X) = \Fix(\kappa) \oplus \wCob(\kappa)$ if and only if $M(X) = \Fix(\kappa^*) \oplus \wsCob(\kappa^*)$. Moreover, the corresponding linear projection from $M(X)$ onto $\Fix(\kappa^*)$ is the adjoint $P^*$ of the mean ergodic projection $P$.
\end{proposition}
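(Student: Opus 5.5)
The plan is to verify the two basic annihilator identities directly from the duality $\langle \kappa(s)f,\nu\rangle=\langle f,\kappa^*(s)\nu\rangle$. Then the closure statements follow from the bipolar facts recalled in \S\ref{sec:linear-operator}, and the direct sum equivalence follows from the characterization of weak*-continuous projections.

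For the first identity, $f\in\preannil{\Cob(\kappa^*)}$ means $\langle f,\nu-\kappa^*(s)\nu\rangle=0$ for all $\nu\in M(X)$ and $s\in S$. By duality this says $\langle f-\kappa(s)f,\nu\rangle=0$ for all $\nu$. Since $M(X)=C_0(X)^*$ separates points (testing with Dirac measures $\delta_x$ suffices), this holds if and only if $\kappa(s)f=f$ for all $s$, that is, $f\in\Fix(\kappa)$. The identity $\Fix(\kappa^*)=\annil{\Cob(\kappa)}$ is the mirror computation: $\langle f-\kappa(s)f,\nu\rangle=\langle f,\nu-\kappa^*(s)\nu\rangle$ vanishes for all $f$ exactly when $\kappa^*(s)\nu=\nu$. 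Both identities extend from generators to spans by linearity.

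Taking annihilators of the first identity gives $\annil{\Fix(\kappa)}=\annil{(\preannil{\Cob(\kappa^*)})}$, which is the weak* closure $\wsCob(\kappa^*)$. Taking pre-annihilators of the second gives $\preannil{\Fix(\kappa^*)}=\preannil{(\annil{\Cob(\kappa)})}=\wCob(\kappa)$.

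For the equivalence, suppose first that $C_0(X)=\Fix(\kappa)\oplus\wCob(\kappa)$ with mean ergodic projection $P$. From \S\ref{sec:linear-operator} we have $\ker(P^*)=\annil{\ran(P)}=\annil{\Fix(\kappa)}=\wsCob(\kappa^*)$. We also need $\ran(P^*)$. For a bounded projection, $\ran(P^*)=\ker(I-P^*)=\annil{\ran(I-P)}=\annil{\ker(P)}=\annil{\wCob(\kappa)}=\annil{\Cob(\kappa)}=\Fix(\kappa^*)$. Since $P^*$ is a projection, $M(X)=\Fix(\kappa^*)\oplus\wsCob(\kappa^*)$, and the associated projection is $P^*$.

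Conversely, suppose $M(X)=\Fix(\kappa^*)\oplus\wsCob(\kappa^*)$. Both summands are weak* closed: $\Fix(\kappa^*)$ is an intersection of kernels of the weak*-continuous operators $I-\kappa^*(s)$. By \cite[Exercise 5.18]{fabian2001}, the algebraic projection $Q$ onto $\Fix(\kappa^*)$ along $\wsCob(\kappa^*)$ is weak*-continuous, so $Q=P^*$ for some bounded projection $P$ on $C_0(X)$. Then
\[
\ker(P)=\preannil{\ran(P^*)}=\preannil{\Fix(\kappa^*)}=\wCob(\kappa).
\]
Applying the same identity to $I-P$ gives
\[
\ran(P)=\ker(I-P)=\preannil{\ran(I-P^*)}=\preannil{\ker(P^*)}=\preannil{\wsCob(\kappa^*)}=\preannil{\Cob(\kappa^*)}=\Fix(\kappa).
\]
Here we used that the pre-annihilator of a set equals that of its weak* closure. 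Hence $C_0(X)=\Fix(\kappa)\oplus\wCob(\kappa)$ and $P$ is the mean ergodic projection.

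The only delicate step is this converse direction. It needs the weak* closedness of both summands and the cited criterion that such a projection is weak*-continuous; the rest is bookkeeping with annihilators.
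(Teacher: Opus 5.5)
Your proposal is correct and follows essentially the same route as the paper: duality computations for the two annihilator identities, the bipolar theorem for the closure statements, and, for the converse, the criterion that a projection with weak* closed range and kernel is weak*-to-weak* continuous. The only difference is cosmetic. You identify $\ran(P^*)$ and $\ran(P)$ through the complementary projection $I-P$. The paper instead uses weak* closedness of $\ran(P^*)$ with the bipolar theorem, and in the converse direction checks that the constructed projection has the same range and kernel as the mean ergodic projection.
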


\begin{proof}
First, we prove that $\preannil{\Cob(\kappa^*)} = \Fix(\kappa)$. Observe that
\begin{align*}
\preannil{\Cob(\kappa^*)} &   =   \{f \in C_0(X) \mid \langle f,\nu\rangle = 0~\text{for all}~\nu \in \Cob(\kappa^*)\}    \\
                    &   =   \{f \in C_0(X) \mid \langle f,\nu-\kappa^*(s)\nu\rangle = 0 \text{ for all } \nu \in C_0(X)^*,~s \in S\}    \\
                    &   =   \{f \in C_0(X) \mid \langle f-\kappa(s)f,\nu\rangle = 0~\text{for all}~\nu \in C_0(X)^*,~s \in S\}.
\end{align*}
If $f \in \Fix(\kappa)$, then $\langle f-\kappa(s)f,\nu\rangle = \langle 0,\nu\rangle = 0$, so $f \in \preannil{\Cob(\kappa^*)}$. If $f \notin \Fix(\kappa)$, then there exists $s \in S$ such that $f \neq \kappa(s)f$, which implies that $\langle f-\kappa(s)f,\nu\rangle \neq 0$ for some $\nu \in M(X)$, so $f \notin \preannil{\Cob(\kappa^*)}$. Hence, $\Fix(\kappa) = \preannil{\Cob(\kappa^*)}$. Next, we prove that $\Fix(\kappa^*) = \annil{\Cob(\kappa)}$ in a similar fashion. Observe that
\begin{align*}
\annil{\Cob(\kappa)} &   =   \{\nu \in M(X) \mid \langle f,\nu\rangle = 0~\text{for all}~f \in \Cob(\kappa)\}    \\
                    &   =   \{\nu \in M(X) \mid \langle f - \kappa(s)f,\nu\rangle = 0~\text{for all}~f \in C_0(X),~s \in S\}    \\
                    &   =   \{\nu \in M(X) \mid \langle f,\nu-\kappa^*(s)\nu\rangle = 0~\text{for all}~f \in C_0(X),~s \in S\}.
\end{align*}
If $\nu \in \Fix(\kappa^*)$, then $\langle  f,\nu-\kappa^*(s)\nu\rangle = \langle  f,0\rangle = 0$, so $\nu \in \annil{\Cob(\kappa)}$. If $\nu \notin \Fix(\kappa^*)$, then there exists $s \in S$ such that $\nu \neq \kappa^*(s)\nu$, which implies that $\langle f,\nu-\kappa^*(s)\nu\rangle \neq 0$ for some $f \in C_0(X)$, so $\nu \notin \annil{\Cob(\kappa)}$. Hence, $\Fix(\kappa^*) = \annil{\Cob(\kappa)}$. Finally, to obtain the second claim, notice that
\[
\annil{\Fix(\kappa)} = \annil{(\preannil{\Cob(\kappa^*)})} = \wsCob(\kappa^*) \qquad \text{and} \qquad \preannil{\Fix(\kappa^*)} = \preannil{(\annil{\Cob(\kappa)})} = \wCob(\kappa).
\]

Next, assume that $C_0(X) = \Fix(\kappa) \oplus \wCob(\kappa)$. If $P\colon C_0(X) \to \Fix(\kappa)$ is the mean ergodic projection, then $P^*\colon M(X) \to \Fix(\kappa^*)$ is a weak*-to-weak* continuous linear projection such that
\[
M(X) = \ran(P^*) \oplus \ker(P^*) = \annil{\ker(P)} \oplus \annil{\ran(P)},
\]
where we have used that, as $\ker(P) = \preannil{\ran(P^*)}$ and $\ran(P^*)$ is weak* closed, 
\[
\ran(P^*) = \overline{\ran(P^*)}^{*} = \annil{(\preannil{\ran(P^*)})} = \annil{\ker(P)}.
\]
Since $\ran(P) = \Fix(\kappa)$ and $\ker(P) = \wCob(\kappa)$, we have
\[
M(X) = \annil{\wCob(\kappa)} \oplus \annil{\Fix(\kappa)} = \annil{\Cob(\kappa)} \oplus \annil{\Fix(\kappa)} = \Fix(\kappa^*) \oplus \wsCob(\kappa^*).
\]

Conversely, assume that $M(X) = \Fix(\kappa^*) \oplus \wsCob(\kappa^*)$. Since $\Fix(\kappa^*)$ and $\wsCob(\kappa^*)$ are weak* closed subspaces, by \cite[Exercise 5.18]{fabian2001}, the corresponding projection $Q\colon M(X) \to M(X)$ onto $\Fix(\kappa^*)$ is weak*-to-weak* continuous. Therefore, $Q = \tilde{P}^*$ for some norm continuous linear projection $\tilde{P}\colon C_0(X) \to C_0(X)$. Thus,
\[
C_0(X) = \ran(\tilde{P}) \oplus \ker(\tilde{P}) = \preannil{\ker(Q)} \oplus \preannil{\ran(Q)},
\]
where we have used that, as $\ker(Q) = \annil{\ran(\tilde{P})}$ and $\ran(\tilde{P})$ is norm closed,
\[
\ran(\tilde{P}) = \overline{\ran(\tilde{P})} = \preannil{(\annil{\ran(\tilde{P})})} = \preannil{\ker(Q)}.
\]
Since $\ran(Q) = \Fix(\kappa^*)$ and $\ker(Q) = \wsCob(\kappa^*)$,
\[
C_0(X) = \preannil{\wsCob(\kappa^*)} \oplus \preannil{\Fix(\kappa^*)} = \preannil{(\annil{\Fix(\kappa)})} \oplus \preannil{(\annil{\Cob(\kappa)})} = \Fix(\kappa) \oplus \wCob(\kappa).
\]

Finally, observe that
\[
\ran(P) = \Fix(\kappa) = \preannil{(\annil{\Fix(\kappa)})} = \preannil{\wsCob(\kappa^*)} = \preannil{\ker(Q)} = \ran(\tilde{P}).
\]
Moreover,
\[
\ker(P)=\wCob(\kappa)=\preannil{\Fix(\kappa^*)}=\preannil{\ran(Q)}=\ker(\tilde{P}).
\]
Thus $P$ and $\tilde P$ have the same range and the same kernel, and hence $P=\tilde P$.
\end{proof}

\subsection{Pointwise ergodicity}

Fix an action by proper maps $S\acts X$. A point $x \in X$ \textbf{escapes to infinity} if $\operatorname{cl}_X(Sx)$ is not compact. The action $S\acts X$ is said to be \textbf{pointwise invariant-measure admitting} if $M^+_1(\operatorname{cl}_X(Sx),S)$ is not empty for every $x\in X$ that does not escape to infinity. Moreover, it is said to be \textbf{pointwise uniquely ergodic} if, for every $x\in X$,
\[
|M^+_1(\operatorname{cl}_X(Sx),S)|
=
\begin{cases}
0 & \text{if $x$ escapes to infinity},\\
1 & \text{otherwise}.
\end{cases}
\]
If $x$ does not escape to infinity, we denote by $\mu_x$ the unique element of $M^+_1(\operatorname{cl}_X(Sx),S)$. For such $x$, the measure $\mu_x$ necessarily belongs to $E^+_1(X,S)$. Indeed, let $A$ be essentially $S$-invariant with respect to $\mu_x$.
If $0<\mu_x(A)<1$, then the normalized restrictions,
\[
\mu_A(B):=\frac{\mu_x(B\cap A)}{\mu_x(A)}
\qquad \text{and} \qquad
\mu_{A^c}(B):=\frac{\mu_x(B\cap A^c)}{1-\mu_x(A)},
\]
are $S$-invariant probability measures. Indeed, since $A$ is essentially $S$-invariant, for every Borel set $B$ and every $s\in S$,
\[
\mu_x(A\cap s^{-1}B) = \mu_x(s^{-1}(A\cap B)) = \mu_x(A\cap B),
\]
and hence $s_*\mu_A=\mu_A$, and the same argument applies to $\mu_{A^c}$. Since they are absolutely continuous with respect to $\mu_x$, their supports are contained in $\operatorname{supp}(\mu_x)\subseteq \operatorname{cl}_X(Sx)$. Thus $\mu_A,\mu_{A^c}\in M_1^+(\operatorname{cl}_X(Sx),S)$, contradicting the uniqueness of $\mu_x$. Hence $\mu_x(A)\in\{0,1\}$, so $\mu_x$ is $S$-ergodic. In the pointwise uniquely ergodic case, we define the \textbf{pointwise ergodic map} $\Phi\colon X \longrightarrow M^+_{\leq 1}(X,S)$ by
\[
\Phi(x):=
\begin{cases}
0 & \text{if $x$ escapes to infinity},\\
\mu_x & \text{otherwise}.
\end{cases}
\]

The action $S\acts X$ is said to be \textbf{continuously pointwise ergodic} if it is a pointwise uniquely ergodic action and $\Phi$ is weak* continuous. A weak* continuous map $\Psi\colon X\to M(X)$ \textbf{vanishes at infinity} if, for every
$f\in C_0(X)$, the function
\[
f\Psi\colon X \longrightarrow \R, \qquad x \longmapsto f\Psi(x) := \int{f}\, d\Psi(x),
\]
belongs to $C_0(X)$. A continuously pointwise ergodic action $S \acts X$ \textbf{vanishes at infinity} if the associated map $\Phi$ vanishes at infinity. If $X$ is compact, this condition is automatically satisfied.

The action $S \acts X$ is said to be \textbf{uniquely ergodic} if it is pointwise uniquely ergodic and $\Phi$ is constant. In this case, the action $S \acts X$ is continuously pointwise ergodic, and if $X$ is non-compact, then it vanishes at infinity if and only if $\Phi \equiv 0$. In the non-compact case, this terminology is understood in the above pointwise sense. In particular, the constant value of $\Phi$ may be the zero measure.

\subsection{Characterizations of weak* continuous maps}

A function $\Psi\colon X \to M(X)$ is \textbf{weak* measurable} if, for every $f \in C_0(X)$, the function $f\Psi\colon X \to \R$ is measurable (see \cite[Definition 11.48]{aliprantis2006}). Fix $\nu$ in $M(X)$. A weak* measurable function $\Psi$ is \emph{Gelfand integrable} with respect to $\nu$ if for any set $A \in \mathcal{X}$, there exists $\nu_A \in M(X)$ satisfying
\[
\int f \,d\nu_A = \int_{A} f\Psi \, d\nu \qquad \text{for every } f\in C_0(X).
\]
If $\Psi$ is Gelfand integrable, we denote $\nu_A$ by $\int_{A} \Psi \, d\nu$ for each $A \in \mathcal{X}$.

\begin{lemma}
\label{lem:gelfand}
Let $X$ be a separable locally compact metric space. If $\Psi\colon X \to M(X)$ is weak* continuous and vanishes at infinity, then $\Psi$ is weak* measurable, has norm bounded range, and is Gelfand integrable with respect to every finite positive measure.
\end{lemma}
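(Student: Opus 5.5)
We prove the three assertions in turn. Measurability is immediate and boundedness is the only real analytic step.

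Weak* measurability: by weak* continuity of $\Psi$, for each $f\in C_0(X)$ the map $f\Psi$ is a composition of the continuous map $\Psi\colon X\to (M(X),\text{weak*})$ with the continuous functional $\nu\mapsto\int f\,d\nu$. Hence $f\Psi$ is continuous, and in particular Borel measurable.

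Norm boundedness: we use the uniform boundedness principle on the Banach space $C_0(X)$. Consider the family of functionals $\{\Psi(x)\}_{x\in X}\subseteq C_0(X)^*$. For each fixed $f\in C_0(X)$, the hypothesis that $\Psi$ vanishes at infinity gives $f\Psi\in C_0(X)$, so
\[
\sup_{x\in X}\Bigl|\int f\,d\Psi(x)\Bigr|=\|f\Psi\|_\infty<\infty.
\]
Banach--Steinhaus then yields $\sup_x\|\Psi(x)\|_{\operatorname{TV}}=:C<\infty$, using the Riesz identification of the dual norm with the total variation norm. This is the one place where vanishing at infinity, or at least boundedness of $f\Psi$, is essential. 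Weak* continuity alone does not bound the range on non-compact $X$, so this is the main point. The rest is routine.

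Gelfand integrability: fix a finite positive measure $\nu$ and $A\in\mathcal X$. Define $\Lambda_A(f):=\int_A f\Psi\,d\nu$ for $f\in C_0(X)$. This is well defined because $f\Psi$ is bounded and continuous and $\nu$ is finite. The functional $\Lambda_A$ is linear, and
\[
|\Lambda_A(f)|\le \nu(A)\,\sup_x|f\Psi(x)|\le C\,\nu(X)\,\|f\|_\infty,
\]
so it is bounded on $C_0(X)$. By the Riesz--Markov--Kakutani theorem there is a unique $\nu_A\in M(X)$ with $\int f\,d\nu_A=\Lambda_A(f)$ for all $f$, which is exactly Gelfand integrability.
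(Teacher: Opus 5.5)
Your proof is correct and follows the paper's argument: measurability comes from the continuity of $f\Psi$, and the norm bound $\sup_{x}\|\Psi(x)\|_{\operatorname{TV}}<\infty$ comes from Banach--Steinhaus applied to $\{\langle\cdot,\Psi(x)\rangle\}_{x\in X}$, using $\|f\Psi\|_\infty<\infty$. The only difference is the last step. The paper cites \cite[Corollary~11.53]{aliprantis2006}, whereas you prove Gelfand integrability directly: you show that $f\mapsto\int_A f\Psi\,d\nu$ is a bounded linear functional on $C_0(X)$, with norm at most $C\,\nu(A)$, and then apply Riesz--Markov--Kakutani, which makes your argument self-contained.
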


\begin{proof}
Assume that $\Psi$ is weak* continuous and vanishes at infinity. Then, for any $f \in C_0(X)$, the function $f\Psi$ belongs to $C_0(X)$, hence is measurable, so $\Psi$ is weak* measurable.

For each $x \in X$, via the Riesz--Markov--Kakutani theorem, the measure $\Psi(x)$ can be regarded as the bounded linear functional 
\[
\langle \cdot,\Psi(x)\rangle \colon C_0(X) \longrightarrow \R, \qquad f \longmapsto \langle f,\Psi(x)\rangle = \int f \,d\Psi(x).
\]
Consider the family of bounded linear functionals $\{\langle \cdot,\Psi(x)\rangle\}_{x \in X}$. For each $f \in C_0(X)$, as $f\Psi \in C_0(X)$, we have that
\[
\sup_{x \in X} \left|\langle f,\Psi(x)\rangle\right| = \sup_{x \in X} \left|f\Psi(x)\right| = \|f\Psi\|_\infty < \infty.
\]
Thus, by the Banach–Steinhaus theorem and the isometric identification between $M(X)$ and $C_0(X)^*$, we obtain that
\[
\sup_{x \in X} \|\Psi(x)\|_{\operatorname{TV}} = \sup_{x \in X} \sup_{\|f\|_\infty \leq 1}|\langle f,\Psi(x)\rangle| < \infty,
\]
that is, $\Psi$ has norm bounded range. Finally, \cite[Corollary 11.53]{aliprantis2006} shows that $\Psi$ is Gelfand integrable with respect to any $\nu$ in $M^+(X)$.
\end{proof}

The following result characterizes when a map $\Psi\colon X \to M(X)$ is weak* continuous and vanishes at infinity (see \cite[\S~VI.7.1, Theorem~1]{dunford1988} for related results).

\begin{theorem}
\label{thm:psi-general}
Let $X$ be a separable locally compact metric space. For any map $\Psi\colon X \to M(X)$, the following are equivalent:
\begin{enumerate}
    \item The map $\Psi$ is weak* continuous and vanishes at infinity.
    \item The map
    \[
    P\colon C_0(X) \longrightarrow C_0(X), \qquad f \longmapsto Pf := f\Psi,
    \]
    is a well-defined bounded linear operator.
    \item The map $\Psi$ is Gelfand integrable with respect to every $\nu\in M(X)$ and the map
    \[
    Q\colon M(X) \longrightarrow M(X), \quad \nu \longmapsto Q\nu := \int{\Psi}\, d\nu,
    \]
    is a well-defined weak*-to-weak* continuous linear operator.
\end{enumerate}
If any of the above holds, then $Q = P^*$ and $Q$ is the unique weak*-to-weak* continuous linear operator such that $Q \circ \iota_X = \Psi$, that is, $Q\delta_x = \Psi(x)$ for all $x \in X$. In particular, $\wkscspanning(\Psi(X)) = \overline{\ran(P^*)}^{\,*} = \annil{\ker(P)}$. If, moreover, $\ran(P^*)$ is weak* closed, for instance if $P^2=P$, then $\wkscspanning(\Psi(X))=\ran(P^*)$.
\end{theorem}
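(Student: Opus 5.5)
I will prove the cycle $(1)\Rightarrow(2)\Rightarrow(3)\Rightarrow(1)$ and then the supplementary claims.

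\emph{$(1)\Rightarrow(2)$.} By vanishing at infinity, $Pf=f\Psi\in C_0(X)$ for every $f\in C_0(X)$, and linearity in $f$ is clear. Lemma~\ref{lem:gelfand} gives $C:=\sup_x\|\Psi(x)\|_{\operatorname{TV}}<\infty$. Hence $\|Pf\|_\infty\le C\|f\|_\infty$, so $P$ is bounded.

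\emph{$(2)\Rightarrow(3)$.} Let $P$ be bounded on $C_0(X)$.
\begin{itemize}
\item For each $x$, evaluation at $x$ gives $\langle Pf,\delta_x\rangle=\langle f,\Psi(x)\rangle$, so $P^*\delta_x=\Psi(x)$.
\item $\Psi$ is weak* continuous, because $f\Psi=Pf$ is continuous for every $f$. It vanishes at infinity for the same reason. Lemma~\ref{lem:gelfand} then gives weak* measurability and Gelfand integrability with respect to every positive finite measure.
\item For signed $\nu$, apply this to $\nu^+$ and $\nu^-$ and subtract; for a Borel $A$ use $\nu|_A$. So $\Psi$ is Gelfand integrable with respect to every $\nu\in M(X)$.
\end{itemize}
The defining identity gives
\[
\int f\,d\Big(\int\Psi\,d\nu\Big)=\int f\Psi\,d\nu=\int Pf\,d\nu=\langle f,P^*\nu\rangle .
\]
Hence $Q=P^*$, which is linear and weak*-to-weak* continuous.

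\emph{$(3)\Rightarrow(1)$.} This is the step needing care. Since $Q$ is weak*-to-weak* continuous, the cited result \cite[Theorem~3.1.11]{megginson2012} gives $Q=T^*$ for some bounded $T\colon C_0(X)\to C_0(X)$. Gelfand integrability with respect to $\delta_x$ shows $Q\delta_x=\Psi(x)$, since $\int f\Psi\,d\delta_x=f\Psi(x)$. Then
\[
(Tf)(x)=\langle Tf,\delta_x\rangle=\langle f,Q\delta_x\rangle=f\Psi(x),
\]
so $f\Psi=Tf\in C_0(X)$. This gives weak* continuity and vanishing at infinity. It also shows $T=P$ and $Q=P^*$.

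\emph{Uniqueness of $Q$.} Any weak*-to-weak* continuous linear $Q'$ with $Q'\delta_x=\Psi(x)$ agrees with $Q$ on $\spanning\,\iota_X(X)$. This span is weak* dense in $M(X)$: by Hahn--Banach, its pre-annihilator consists of the $f$ with $f(x)=0$ for all $x$, i.e.\ $\{0\}$. By continuity, $Q'=Q$.

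\emph{Closure identities.} Since $P^*$ is weak* continuous, the density above gives $P^*(M(X))\subseteq\wkscspanning(P^*\iota_X(X))=\wkscspanning(\Psi(X))$. The reverse inclusion $\Psi(X)\subseteq\ran(P^*)$ is immediate. Therefore $\wkscspanning(\Psi(X))=\overline{\ran(P^*)}^{\,*}$, and by \S\ref{sec:linear-operator} this equals $\annil{(\preannil{\ran(P^*)})}=\annil{\ker(P)}$.

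If $\ran(P^*)$ is weak* closed, the last statement follows at once. When $P^2=P$, $P^*$ is a projection and $\ran(P^*)=\ker(I-P^*)$, which is weak* closed.
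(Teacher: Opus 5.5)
Your proposal is correct and follows essentially the same argument as the paper. It uses Lemma~\ref{lem:gelfand} for boundedness, identifies $Q=P^*$ through the defining identity of the Gelfand integral, gets the converse from $Q=T^*$ and $f\Psi=Tf$, and derives uniqueness and the closure identities from weak* density of $\spanning(\iota_X(X))$. The only differences are cosmetic: you organize the equivalences as a cycle rather than as $(1)\iff(2)$ and $(1)\iff(3)$, and you spell out the Hahn--Banach justification for the density of the Dirac span, which the paper takes for granted.
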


\begin{proof}
We prove the equivalences $(1) \iff (2)$ and $(1) \iff (3)$.

\medskip
\noindent
{$(1) \iff (2)$.} Note that
\[
(Pf)(x) = \int f \,d\Psi(x) \qquad \text{for}~f \in C_0(X),~x \in X,
\]
so, by definition of the weak* topology, the map $\Psi$ is weak* continuous and vanishes at infinity if and only if $Pf$ belongs to $C_0(X)$ for each $f \in C_0(X)$ or, equivalently, if the map $P\colon C_0(X) \to C_0(X)$ is well-defined. Therefore, it suffices to check that whenever $P$ is well-defined, it is linear and bounded.

Assume that $P$ is well-defined or, equivalently, that $\Psi$ is weak* continuous and vanishes at infinity. That $P$ is linear follows directly from the linearity of the integral. To prove that $P$ is bounded, set $C :=\sup_{x \in X} \|\Psi(x)\|_{\operatorname{TV}}$ and observe that $C < \infty$ by Lemma~\ref{lem:gelfand}. Then,
\begin{align*}
\|Pf\|_\infty = \sup_{x \in X} \left|\int f \,d\Psi(x)\right| & \leq \sup_{x \in X} \int \left|f\right|\,d|\Psi(x)| \\
            &  \leq \sup_{x \in X} \int \|f\|_\infty\,d|\Psi(x)| = \|f\|_\infty \sup_{x \in X} |\Psi(x)|(X),
\end{align*}
so
\[
\|Pf\|_\infty \leq \sup_{x \in X} \|\Psi(x)\|_{\operatorname{TV}}\|f\|_\infty = C\|f\|_\infty,
\]
which proves that $P$ is a bounded linear operator on $C_0(X)$.

\medskip
\noindent
{$(1) \implies (3)$.} If $\Psi$ is weak* continuous and vanishes at infinity, then Lemma~\ref{lem:gelfand} implies that $\Psi$ is weak* measurable, has norm bounded range, and is Gelfand integrable with respect to every finite positive measure, and therefore, by the Jordan decomposition, with respect to every $\nu\in M(X)$. Thus, $Q\nu = \int{\Psi}\,d\nu$ is well-defined for all $\nu \in M(X)$. That $Q$ is linear follows from the definition of Gelfand integral. Moreover, it was already established that the weak* continuity of $\Psi$ is equivalent to $P$ being a well-defined bounded linear operator. Since, for every $f\in C_0(X)$ and $\nu\in M(X)$,
\[
\langle f,Q\nu\rangle = \left\langle f,\int_X \Psi\,d\nu\right\rangle = \int_X f\Psi \,d\nu = \int Pf\,d\nu
= \langle Pf,\nu\rangle = \langle f,P^*\nu\rangle,
\]
we have that $Q=P^*$, so $Q$ is weak*-to-weak* continuous.

\medskip
\noindent
{$(3) \implies (1)$.} Assume that $\Psi$ is Gelfand integrable with respect to every $\nu \in M(X)$. Observe that $Q \delta_x = \int \Psi \, d\delta_x = \Psi(x)$ for all $x \in X$, so $\Psi = Q \circ \iota_X$. As $Q$ is weak*-to-weak* continuous and $\iota_X$ is weak* continuous, $\Psi$ is weak* continuous as well. Moreover, the weak*-to-weak* continuity of $Q$ implies that there exists a bounded linear operator $\widetilde{P}\colon C_0(X) \to C_0(X)$ such that $Q = \widetilde{P}^*$. Therefore, for any $f \in C_0(X)$ and $x \in X$,
\[
f\Psi(x) = \langle f,\Psi(x)\rangle = \langle f,Q\delta_x\rangle = \langle \widetilde{P}f,\delta_x\rangle = (\widetilde{P}f)(x),
\]
that is $f\Psi = \widetilde{P}f$. This shows that $f\Psi$ belongs to $C_0(X)$, and as $f$ was arbitrary, we have that $\Psi$ vanishes at infinity.

\medskip

Suppose $Q$ is a weak*-to-weak* continuous linear operator satisfying $Q\delta_x = \Psi(x)$ for all $x \in X$. Then, for any $f \in C_0(X)$,
\[
\langle f,P^*\delta_x\rangle = \langle Pf,\delta_x\rangle = (Pf)(x) = \int{f}\,d\Psi(x) = \langle f,\Psi(x)\rangle =  \langle f,Q\delta_x\rangle.
\]
As $f$ was arbitrary, $P^*\delta_x = Q\delta_x$ for all $x \in X$. Since $P^*$ is also weak*-to-weak* continuous, and the linear span of Dirac delta measures, $\spanning(\iota_X(X))$, is weak* dense in $M(X)$, we have that $Q = P^*$.

Since $\spanning(\iota_X(X))$ is weak* dense in $M(X)$ and
$P^*$ is weak*-to-weak* continuous, we have
\[
\ran(P^*) \subseteq \overline{\spanning(P^*\iota_X(X))}^{*} = \wkscspanning(\Psi(X)).
\]
The reverse inclusion follows from $\Psi(X)\subseteq\ran(P^*)$. Therefore
\[
\wkscspanning(\Psi(X)) = \overline{\ran(P^*)}^{*}.
\]
Finally,
\[
\overline{\ran(P^*)}^{*} = \annil{\ker(P)}
\]
by the annihilator identities recalled in Section~\ref{sec:linear-operator}. If $P^2=P$, then $P^*$ is a weak*-to-weak* continuous projection, so $\ran(P^*)=\ker(I-P^*)$ is weak* closed.
\end{proof}

\section{The compact case}
\label{sec:compact-case}

Throughout this section, $X$ is compact. Hence $C_0(X)=C(X)$, no point escapes to infinity, and pointwise unique ergodicity simply means that each orbit closure $\operatorname{cl}_X(Sx)$ supports a unique $S$-invariant probability measure. In this case, the associated map $\Phi$ takes values in $M^+_1(X,S)$, and the vanishing-at-infinity condition is automatic.

\subsection{Almost every point generates the support}

The following result establishes that if $\mu$ is $S$-ergodic, then $\mu$-almost every point generates the support of $\mu$. As a consequence, if the system is pointwise uniquely ergodic, the map $\Phi$ is surjective onto the set of $S$-ergodic measures.

\begin{proposition}
\label{prop:transitive-point}
Let $S$ be a semigroup, $X$ a compact metric space, and $S \acts X$ an action by continuous maps. Then, for any $\mu \in E^+_1(X,S)$ and for $\mu$-almost every $x \in X$,
\[
\operatorname{supp}(\mu) = \operatorname{cl}_X(Sx) \qquad \text{and} \qquad \mu \in M^+_1(\operatorname{cl}_X(Sx),S).
\]
In particular, if $S \acts X$ is pointwise uniquely ergodic, then $\Phi(X) = E^+_1(X,S)$.
\end{proposition}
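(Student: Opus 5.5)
Write $K:=\operatorname{supp}(\mu)$. As noted in the preliminaries, $K$ is closed and $S$-invariant, and $\mu(K)=1$. So for $\mu$-almost every $x$ we have $x\in K$, and hence $\operatorname{cl}_X(Sx)\subseteq K$. The work is in the reverse inclusion. Since $X$ is a compact metric space, $K$ is separable. Fix a countable base $\{U_n\}_{n\in\N}$ of the relative topology of $K$ consisting of non-empty open subsets of $K$. Each satisfies $\mu(U_n)>0$ by the definition of support. For each $n$, I will consider the set of points whose orbit misses $U_n$:
\[
B_n:=\{x\in K \mid sx\notin U_n \text{ for all } s\in S\}=K\setminus \bigcup_{s\in S}s^{-1}U_n .
\]
It suffices to show $\mu(B_n)=0$ for every $n$. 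Then every $x$ in the full-measure set $K\setminus\bigcup_n B_n$ has orbit meeting every $U_n$, so $Sx$ is dense in $K$ and $\operatorname{cl}_X(Sx)=K$.

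The main obstacle is measurability. The union $\bigcup_{s\in S}s^{-1}U_n$ is over a possibly uncountable semigroup, but each $s^{-1}U_n$ is relatively open in $K$, so the union is open and $B_n$ is closed, hence Borel. Next I check that $B_n$ is forward invariant in the useful direction: if $x\in B_n$ and $t\in S$, then $s(tx)=(st)x\notin U_n$ for all $s$. Thus $tB_n\subseteq B_n$, that is, $B_n\subseteq t^{-1}B_n$. By invariance $\mu(t^{-1}B_n)=\mu(B_n)$, so $\mu(t^{-1}B_n\setminus B_n)=0$ and $B_n$ is essentially $S$-invariant. Ergodicity then gives $\mu(B_n)\in\{0,1\}$. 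If $\mu(B_n)=1$, then $B_n$ would be a closed set of full measure, so $K\subseteq B_n$. But $U_n\subseteq K$ is non-empty and disjoint from $B_n$ (it has positive measure, while $\mu(B_n)=1$ forces $\mu(U_n)=0$), a contradiction. Hence $\mu(B_n)=0$. The same argument applies to the zero measure vacuously, but $\mu$ is a probability measure here, so this case does not arise.

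For the second claim, take $x$ in the full-measure set above. Then $\operatorname{supp}(\mu)=\operatorname{cl}_X(Sx)$, and by the identification recalled in the preliminaries between invariant measures supported in a closed invariant set $A$ and $M^+_1(A,S)$, we get $\mu\in M^+_1(\operatorname{cl}_X(Sx),S)$.

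Finally, for the "in particular", assume pointwise unique ergodicity. The paper has already shown that each $\Phi(x)=\mu_x$ is ergodic, so $\Phi(X)\subseteq E^+_1(X,S)$. Conversely, given $\mu\in E^+_1(X,S)$, pick any $x$ from the full-measure set, which is non-empty since $\mu(X)=1$. Then $\mu\in M^+_1(\operatorname{cl}_X(Sx),S)$, and uniqueness forces $\mu=\mu_x=\Phi(x)$. Hence $\Phi(X)=E^+_1(X,S)$.
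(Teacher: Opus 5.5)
Your strategy matches the paper's: the closed sets $B_n$ of points of $K$ whose orbit misses $U_n$, the inclusion $B_n\subseteq t^{-1}B_n$ giving essential invariance, and ergodicity forcing $\mu(B_n)\in\{0,1\}$. The gap is in ruling out $\mu(B_n)=1$.

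You assert that $U_n$ is disjoint from $B_n$. This tacitly assumes $x\in Sx$, which fails for a general semigroup: no identity is assumed, and even an identity element need not act trivially under the paper's definition of an action. For a counterexample, let $S=(\{1,2,3,\dots\},+)$ act on $\{0,1\}^{\N}$ by powers of the shift. Take $\mu$ to be the $(\tfrac12,\tfrac12)$-Bernoulli measure and $U=\{x: x_0=1\}$. The point $1000\cdots$ lies in $U$, but its orbit is $\{000\cdots\}$, so it belongs to $B$. In fact, under the hypothesis $\mu(B_n)=1$ you have just derived $B_n=K\supseteq U_n$. So the claimed disjointness contradicts your own previous sentence rather than following from anything independent. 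Your parenthetical remark that $\mu(B_n)=1$ forces $\mu(U_n)=0$ itself depends on that disjointness, so it is circular.

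The missing ingredient is invariance of $\mu$ applied to preimages of $U_n$. For any fixed $t\in S$, the set $t^{-1}U_n$ is disjoint from $B_n$ by the definition of $B_n$. Also $\mu(t^{-1}U_n)=\mu(U_n)>0$, so $\mu(B_n)\le 1-\mu(U_n)<1$. This is exactly how the paper concludes. With this replacement, the rest of your argument goes through as written: density of $Sx$ in $K$ on the full-measure set, the membership $\mu\in M^+_1(\operatorname{cl}_X(Sx),S)$, and the identification $\Phi(X)=E^+_1(X,S)$.
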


\begin{proof}
Fix $\mu\in E^+_1(X,S)$, and let $(U_n)_n$ be a countable family of open subsets of $X$ such that $(U_n\cap\operatorname{supp}(\mu))_n$ is a basis for the relative topology of $\operatorname{supp}(\mu)$, with
$U_n\cap\operatorname{supp}(\mu)\neq\emptyset$. For each $n$, define
\[
B_n := \operatorname{supp}(\mu)\cap \bigcap_{t\in S} t^{-1}(X\setminus U_n).
\]
Then $B_n$ is closed, hence Borel, and consists of the points in $\operatorname{supp}(\mu)$ whose $S$-orbit does not meet $U_n$. We claim that $B_n$ is essentially $S$-invariant. Indeed, if $x\in B_n$ and $s\in S$, then for every $t\in S$ we have
\[
t(sx)=(ts)x\notin U_n,
\]
so $sx\in B_n$. Thus $B_n\subseteq s^{-1}B_n$. Since $\mu$ is $S$-invariant, $\mu(s^{-1}B_n)=\mu(B_n)$, and hence $\mu(s^{-1}B_n\setminus B_n)=0$. Therefore $B_n$ is essentially $S$-invariant.

By ergodicity, $\mu(B_n)\in\{0,1\}$. On the other hand, for any fixed $t\in S$, $t^{-1}U_n\subseteq X\setminus B_n$ and $\mu(t^{-1}U_n)=\mu(U_n)>0$, because $U_n$ meets the support of $\mu$. Hence $\mu(B_n)\neq1$, so $\mu(B_n)=0$.

Therefore, for $\mu$-almost every $x\in\operatorname{supp}(\mu)$ and every $n$, the orbit $Sx$ meets $U_n$. Hence $Sx$ is dense in $\operatorname{supp}(\mu)$, and since $\operatorname{supp}(\mu)$ is closed and $S$-invariant, we obtain
\[
\operatorname{cl}_X(Sx)=\operatorname{supp}(\mu)
\]
for $\mu$-almost every $x$. Set
\[
A_\mu := \operatorname{supp}(\mu)\setminus\bigcup_{n\in\mathbb N}B_n.
\]
Then $A_\mu$ is Borel, $\mu(A_\mu)=1$, and every $x\in A_\mu$ satisfies $\operatorname{cl}_X(Sx)=\operatorname{supp}(\mu)$.

Finally, assume that $S \acts X$ is pointwise uniquely ergodic. As $\Phi(x)$ is $S$-ergodic for any $x \in X$, $\Phi(X) \subseteq E^+_1(X,S)$. To prove the reverse inclusion, let $\mu \in E^+_1(X,S)$. Note that the set $A_\mu$ has full $\mu$-measure and is therefore non-empty. Let $x \in A_\mu$. Since $\operatorname{supp}(\mu) = \operatorname{cl}_X(Sx)$, $\mu$ is an $S$-invariant probability measure supported on $\operatorname{cl}_X(Sx)$. By the definition of pointwise unique ergodicity, the system restricted to the orbit closure $\operatorname{cl}_X(Sx)$ admits a unique $S$-invariant probability measure, which is $\Phi(x)$. Thus, we must have that $\mu = \Phi(x)$. This shows that $E^+_1(X,S) \subseteq \Phi(X)$, proving the equality.
\end{proof}

\subsection{Linear and convex hulls of ergodic measures}

Let $C$ be a compact and convex non-empty subset of a Hausdorff locally convex topological vector space $V$. The Krein-Milman Theorem asserts that $C$ is equal to the closed convex hull of the set of extreme points $\ext C$:
\[
C=\cconv(\ext C).
\]
Moreover, if $K \subseteq C$, then $C = \cconv(K)$ if and only if $\ext C \subseteq \operatorname{cl}_V(K)$ (see \cite[Theorem~7.4 and Theorem~7.8]{conway1990}). 

\begin{lemma}
\label{lem:convex}
Let $S$ be a semigroup, let $X$ be a compact metric space, and let
$S\acts X$ be an action by continuous maps. If
$M_1^+(X,S)\neq\emptyset$, then
\[
\wkscspanning\left(E_1^+(X,S)\right) = M(X,S).
\]
In particular, if $S\acts X$ is pointwise uniquely ergodic, then $\wkscspanning\left(\Phi(X)\right) = M(X,S)$.
\end{lemma}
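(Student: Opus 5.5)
We will prove the two inclusions separately. The inclusion $\wkscspanning(E_1^+(X,S))\subseteq M(X,S)$ is immediate: every ergodic measure lies in $M(X,S)=\Fix(\kappa^*)$, which is a linear subspace. It is also weak* closed, being an intersection of kernels of the weak*-continuous operators $I-\kappa^*(s)$. So the real content is the reverse inclusion.

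For the reverse inclusion we first reduce to probability measures. As noted in the preliminaries, the Jordan decomposition of an invariant signed measure has invariant parts, so $M(X,S)=\spanning(M_1^+(X,S))$. It therefore suffices to show that
\[
M_1^+(X,S)\subseteq \stcconv(E_1^+(X,S)),
\]
because the weak* closed convex hull is contained in the weak* closed linear span.

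To prove this, we apply Krein--Milman in the locally convex Hausdorff space $(M(X),\sigma(M(X),C(X)))$. The set $M_1^+(X,S)$ is non-empty by hypothesis and convex. It is weak* compact, since it is a weak* closed subset of the compact set $M_1^+(X)$; closedness uses that $\mathbbm{1}\in C(X)$ and that $M(X,S)$ is weak* closed. Its extreme points are exactly the ergodic measures, by \cite[Proposition~12.4]{phelps2001}. Krein--Milman then gives $M_1^+(X,S)=\stcconv(E_1^+(X,S))$.

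The only point needing care is the identification of the extreme points of $M_1^+(X,S)$ with $E_1^+(X,S)$ for an arbitrary semigroup, rather than a group. We rely on the cited result here. If a direct argument were wanted, the implication "extreme $\Rightarrow$ ergodic" follows from the normalized-restriction argument already given in the paper for $\mu_x$. This is exactly the direction Krein--Milman needs, since we must know that the extreme points belong to $E_1^+(X,S)$.

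Finally, the "in particular" clause follows at once from Proposition~\ref{prop:transitive-point}, which gives $\Phi(X)=E_1^+(X,S)$ in the pointwise uniquely ergodic case. Note that $M_1^+(X,S)\neq\emptyset$ holds there automatically, since $\Phi(x)$ exists for any $x\in X$.
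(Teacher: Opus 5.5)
Your proof is correct and follows the paper's argument. Krein--Milman gives $M_1^+(X,S)=\stcconv(E_1^+(X,S))$, the Jordan decomposition recovers $M(X,S)$ as the span of $M_1^+(X,S)$, weak* closedness of $M(X,S)$ gives the reverse inclusion, and Proposition~\ref{prop:transitive-point} handles the final clause. Your additional justifications are sound and go slightly beyond what the paper spells out: the weak* compactness of $M_1^+(X,S)$, and the observation that only extreme $\Rightarrow$ ergodic is needed, which the normalized-restriction argument supplies for arbitrary semigroups.
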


\begin{proof}
By the Krein--Milman theorem, $M_1^+(X,S) = \stcconv\left(E_1^+(X,S)\right)$. Hence $M_1^+(X,S) \subseteq \wkscspanning\left(E_1^+(X,S)\right)$. As $M(X,S)=\spanning\left(M_1^+(X,S)\right)$, we obtain $M(X,S)
\subseteq \wkscspanning\left(E_1^+(X,S)\right)$. The reverse inclusion follows because $M(X,S)$ is weak* closed and
contains $E_1^+(X,S)$.

If the action is pointwise uniquely ergodic, then
Proposition~\ref{prop:transitive-point} gives
$\Phi(X)=E_1^+(X,S)$.
\end{proof}

\subsection{Mean ergodicity and continuous pointwise ergodicity}

\begin{theorem}
\label{thm:main1}
Let $S$ be a semigroup, $X$ a compact metric space, and $S \acts X$ a pointwise invariant-measure admitting action by continuous maps. The following are equivalent:
\begin{enumerate}
\item $S \acts X$ is continuously pointwise ergodic.
\item $C(X) = \Fix(\kappa) \oplus \wCob(\kappa)$.
\item $M(X) = \Fix(\kappa^*) \oplus \wsCob(\kappa^*)$.
\end{enumerate}
If any of these holds, then:
\begin{itemize}
\item The mean ergodic projection $P\colon C(X) \to \Fix(\kappa)$ is defined by
\[
(Pf)(x) = \int f\,d\Phi(x)   \qquad   \text{for}~f \in C(X),~x \in X.
\]
\item The dual mean ergodic projection $Q\colon M(X) \to \Fix(\kappa^*)$ is defined by
\[
Q\nu = \int{\Phi} \, d\nu \qquad   \text{for}~\nu \in M(X),
\]
where the integral is in the sense of Gelfand.
\item The adjoint $P^*$ of $P$ is such that $P^* = Q$ and $P^*\delta_x = \Phi(x)$ for all $x \in X$.
\end{itemize}
\end{theorem}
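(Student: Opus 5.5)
The equivalence (2)$\iff$(3) is already contained in Proposition~\ref{prop:ann-cob-koopman}, which also identifies the dual projection as $P^*$. So the work is to prove (1)$\iff$(2) and to identify $P$ with $f\mapsto f\Phi$.

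\emph{(1)$\Rightarrow$(2).} Assume $\Phi$ is weak* continuous; since $X$ is compact it vanishes at infinity trivially. By Theorem~\ref{thm:psi-general}, $Pf:=f\Phi$ defines a bounded operator on $C(X)$ with $P^*=Q$ and $P^*\delta_x=\Phi(x)$. I then check the following.
\begin{itemize}
\item $\ran(P)\subseteq\Fix(\kappa)$, because $\Phi(sx)=\Phi(x)$.
\item $P$ is the identity on $\Fix(\kappa)$. If $f$ is invariant, it is constant on $Sx$, hence on $\operatorname{cl}_X(Sx)\supseteq\operatorname{supp}\Phi(x)$, so $\int f\,d\Phi(x)=f(x)$.
\item $P$ kills coboundaries. $\int (g-\kappa(s)g)\,d\Phi(x)=0$ by invariance of $\Phi(x)$; by boundedness, $\wCob(\kappa)\subseteq\ker P$.
\end{itemize}
Consequently $P$ is a projection, and $f=Pf+(f-Pf)$.

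It remains to show $\ker P\subseteq\wCob(\kappa)$. I would argue by annihilators, using $\preannil{\Fix(\kappa^*)}=\wCob(\kappa)$ from Proposition~\ref{prop:ann-cob-koopman}. Let $Pf=0$ and $\mu\in M(X,S)$. Since $M(X,S)=\wkscspanning(\Phi(X))$ by Lemma~\ref{lem:convex} (valid because pointwise unique ergodicity gives $M_1^+(X,S)\ne\emptyset$), it suffices to have $\langle f,\Phi(x)\rangle=(Pf)(x)=0$ for all $x$. Hence $f\in\preannil{M(X,S)}=\wCob(\kappa)$, and (2) follows with $P$ as stated. The claims about $Q$ follow from Theorem~\ref{thm:psi-general}.

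\emph{(2)$\Rightarrow$(1).} Let $P$ be the mean ergodic projection; the plan is to show that $P^*\delta_x$ is the unique invariant measure on $\operatorname{cl}_X(Sx)$.
\begin{itemize}
\item \emph{$P^*\delta_x$ is an invariant probability measure.} $P^*\delta_x\in\ran(P^*)=\Fix(\kappa^*)$. It is positive because $P^*\delta_x$ is a weak* limit of convex combinations of $\delta_{sx}$. To see this, note that $\delta_x-P^*\delta_x\in\ker P^*=\wsCob(\kappa^*)=\annil{\Fix(\kappa)}$. A Hahn--Banach separation of $P^*\delta_x$ from the weak* compact set $K_x:=\stcconv\{\delta_{sx}:s\in S\}\cup\{\delta_x\}$ fails: one shows that $\min_{\nu\in K_x}\langle g,\nu\rangle\le\langle g,P^*\delta_x\rangle$ for all $g$. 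I expect this to be \textbf{the main obstacle}.
\item \emph{Attempt at the obstacle.} The attempt is to bound $\langle g,P^*\delta_x\rangle=(Pg)(x)$ using $Pg=P(\kappa(s)g)$ and $Pg\in\Fix(\kappa)$ lying in the closed convex hull of $\{\kappa(s)g\}$. The latter holds by a mean-ergodic-type argument: the set $\cconv\{\kappa(s)g\}\cap\Fix(\kappa)$ is nonempty because $g-Pg\in\wCob(\kappa)$. Proving this rigorously without amenability is delicate. An alternative is to use that $\ker P^*=\annil{\Fix(\kappa)}$: any $\mu\in M_1^+(\operatorname{cl}_X(Sx),S)$, which exists by hypothesis, satisfies $\mu-\delta_x\in\annil{\Fix(\kappa)}$, since invariant continuous functions are constant on orbit closures. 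Hence $P^*\mu=P^*\delta_x$, and $P^*\mu=\mu$ as $\mu$ is invariant.
\item \emph{Conclusion of (2)$\Rightarrow$(1).} The alternative route in the previous step gives positivity and support in $\operatorname{cl}_X(Sx)$ for free. It gives uniqueness as well: every invariant probability measure on the orbit closure equals $P^*\delta_x$. So the action is pointwise uniquely ergodic with $\Phi(x)=P^*\delta_x$. Weak* continuity of $\Phi=P^*\circ\iota_X$ follows from weak*-continuity of $P^*$, giving (1).
\end{itemize}
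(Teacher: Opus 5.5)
Your proposal is correct and follows essentially the paper's route. For (1)$\Rightarrow$(2) you use Theorem~\ref{thm:psi-general}, invariance of $\Phi$ along orbits, and Lemma~\ref{lem:convex} with $\preannil{M(X,S)}=\wCob(\kappa)$. For (2)$\Rightarrow$(1), your ``alternative'' argument ($\mu-\delta_x\in\annil{\Fix(\kappa)}=\ker P^*$ and $P^*\mu=\mu$) is just the dual form of the paper's computation $\mu(f)=\mu(Pf)=(Pf)(x)$ for any $\mu\in M_1^+(\operatorname{cl}_X(Sx),S)$.

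The Hahn--Banach/convex-hull detour you flag as the main obstacle is never needed, and its claim that $\cconv\{\kappa(s)g\}\cap\Fix(\kappa)\neq\emptyset$ is left unproved. Nothing depends on it, so you should simply drop it.
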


\begin{proof}
The equivalence between (2) and (3) follows from Proposition~\ref{prop:ann-cob-koopman}. Thus it suffices to prove the equivalence between $(1)$ and $(2)$.

\medskip
\noindent
\textbf{$(1) \implies (2)$.} If $S \acts X$ is continuously pointwise ergodic, the map $\Phi\colon X \to E^+_1(X,S)$ is well-defined and weak* continuous. Then, by Theorem~\ref{thm:psi-general}, $P\colon C(X) \to C(X)$ defined by $(Pf)(x) = \int{f}\,d\Phi(x)$ is a well-defined bounded linear operator.

For every $s\in S$ and $x\in X$, we have
$\operatorname{cl}_X(S(sx))\subseteq\operatorname{cl}_X(Sx)$. Hence
$\Phi(sx)$, viewed as a measure on $\operatorname{cl}_X(Sx)$, is an
$S$-invariant probability measure. By uniqueness on
$\operatorname{cl}_X(Sx)$, $\Phi(sx)=\Phi(x)$. Thus, for any $f \in C(X)$,
\[
(\kappa(s)Pf)(x) = (Pf)(sx) = \int{f}\,d\Phi(sx) = \int{f}\,d\Phi(x) = (Pf)(x),
\]
so $\kappa(s)P = P$. Since $s$ was arbitrary, this implies that $\ran(P) \subseteq \Fix(\kappa)$.

If $h\in\Fix(\kappa)$, then $h(sx)=h(x)$ for every $s\in S$.
By continuity, $h(y)=h(x)$ for every
$y\in\operatorname{cl}_X(Sx)$. Since
$\operatorname{supp}(\Phi(x))\subseteq\operatorname{cl}_X(Sx)$, we get
\[
(Ph)(x)=\int h\,d\Phi(x)=h(x).
\]
Thus $Ph=h$. This shows that $\ran(P) \supseteq \Fix(\kappa)$, and therefore, $\ran(P) = \Fix(\kappa)$. Moreover, for any $f \in C(X)$, $P^2f = P(Pf) = Pf$, so $P^2=P$ and $P$ is a linear projection.

Finally, observe that $h \in \ker(P)$ if and only if 
\[
(Ph)(x) = \int h\,d\Phi(x) = 0 \quad \text{for all}~x \in X,
\]
and, since $\wkscspanning(\Phi(X)) = M(X,S)$ by Lemma~\ref{lem:convex}, this is also equivalent to
\[
\int h \,d\mu = 0 \quad \text{for all}~\mu \in M(X,S).
\]
Therefore, $\ker(P) = \preannil{M(X,S)} = \preannil{\Fix(\kappa^*)} = \wCob(\kappa)$, where the last equality follows from Proposition~\ref{prop:ann-cob-koopman}. We conclude that $C(X) = \ran(P) \oplus \ker(P) = \Fix(\kappa) \oplus \wCob(\kappa)$.

\medskip
\noindent
\textbf{$(2) \implies (1)$.} Assume that $C(X) = \Fix(\kappa) \oplus \wCob(\kappa)$ and $M^+_1(\operatorname{cl}_X(Sx),S) \neq \emptyset$ for every $x \in X$. Let $P\colon C(X) \to C(X)$ be the mean ergodic projection, so that $\ran(P) = \Fix(\kappa)$ and $\ker(P) = \wCob(\kappa)$. Fix $x \in X$ and $\mu \in M^+_1(\operatorname{cl}_X(Sx),S)$. 

For any $f \in C(X)$, we can write
\[
f = Pf + h
\]
with $h \in \wCob(\kappa)$. As $Pf \in \Fix(\kappa)$, we have that $(Pf)(sx) = (Pf)(x)$ for all $s \in S$, so $(Pf)(y) = (Pf)(x)$ for all $y \in \operatorname{cl}_X(Sx)$. Since $\Fix(\kappa^*) = \annil{\wCob(\kappa)}$, we have that
\[
\mu(f) = \mu(Pf) + \mu(h) = \mu(Pf) = \int Pf \,d\mu,
\]
and, as $\operatorname{supp}(\mu) \subseteq \operatorname{cl}_X(Sx)$,
\[
\mu(f) = \int Pf \, d\mu = \int_{\operatorname{cl}_X(Sx)}{Pf} \, d\mu = (Pf)(x).
\]
Therefore,
\[
\langle f,\mu\rangle = \langle Pf,\delta_x\rangle  = \langle f,P^*\delta_x\rangle. 
\]
Since $f$ was arbitrary, we conclude that $\mu = P^*\delta_x$ and $M^+_1(\operatorname{cl}_X(Sx),S) = \{P^*\delta_x\}$. As $x$ was arbitrary, we have that $S \acts X$ is pointwise uniquely ergodic.

It remains to prove that $\Phi(x) := P^*\delta_x$ is weak* continuous: If $x_n \to x$, then $\delta_{x_n} \to \delta_x$. Since $P^*$ is weak*-to-weak* continuous, $\Phi(x_n)=P^*\delta_{x_n} \to P^*\delta_x = \Phi(x)$. 

\medskip

Under either of the equivalent conditions, the mean ergodic projection $P\colon C(X) \to \Fix(\kappa)$ is defined by
\[
(Pf)(x) = \int f\,d\Phi(x)   \qquad   \text{for}~f \in C(X),~x \in X,
\]
and satisfies $P^*\delta_x = \Phi(x)$ for all $x \in X$. By Theorem~\ref{thm:psi-general}, this implies that the dual mean ergodic projection $Q\colon M(X) \to \Fix(\kappa^*)$ is given by
\[
Q\nu = \int \Phi \, d\nu \qquad   \text{for}~\nu \in M(X),
\]
where the integral is in the sense of Gelfand.
\end{proof}

\section{The locally compact case}
\label{sec:locally-compact-case}

Let $X$ be a separable locally compact metric space. If $X$ is non-compact, let $\widehat X=X \sqcup \{\infty\}$ be the one-point compactification of $X$. If $X$ is compact, we use the same notation for the compact space obtained by adjoining an isolated point $\infty$ to $X$. Under our assumptions, $\widehat X$ is metrizable. In either case, the topology induced on $X$ is the original one, and a neighborhood basis of $\infty$ is given by the sets $\widehat X\setminus K$, where $K\subseteq X$ is compact.

A sequence $(x_n)_n$ in $X$ is said to \textbf{converge to infinity}, and we write $x_n\to\infty$, if for every compact set $K\subseteq X$ there exists $n_0\in\mathbb N$ such that
\[
x_n\notin K \qquad \text{for every } n\geq n_0.
\]
Equivalently, this means that $x_n\to\infty$ in $\widehat X$. If $X$ is compact, no sequence in $X$ converges to infinity.

\subsection{Compactification of functions and measures}

Denote by
\[
I_\infty(\widehat X) := \{g\in C(\widehat X)\mid g(\infty)=0\}
\]
the maximal ideal of functions in $C(\widehat X)$ that vanish at $\infty$. A function $f\colon X\to\R$ belongs to $C_0(X)$ if and only if the extension $\overline{f}\colon \widehat X \to \R$ defined by
\[
\overline{f}(x):=
\begin{cases}
f(x)    &   \text{if}~x\in X\\
0       &   \text{if}~x=\infty
\end{cases}
\]
belongs to $C(\widehat X)$. Observe that a function $f\in C_b(X)$ belongs to $C_0(X)$ if and only if $\lim_n f(x_n)=0$ for every sequence $(x_n)_n$ in $X$ that converges to infinity or, equivalently, if $\lim_n \overline{f}(x_n)=0$ for every sequence $(x_n)_n$ in $\widehat{X}$ such that $x_n \to \infty$.

\begin{lemma}
\label{lem:vanishing-sequences}
Let $X$ be a separable locally compact metric space.
\begin{enumerate}
\item If $x_n\to\infty$, then $\wkslim_n \delta_{x_n} = 0$ in $M(X)$.
\item If $\Psi\colon X\to M(X)$ is weak* continuous, then $\Psi$ vanishes at infinity if and only if $\wkslim_n \Psi(x_n) = 0$ for every sequence $(x_n)_n$ in $X$ such that $x_n\to\infty$.
\end{enumerate}
\end{lemma}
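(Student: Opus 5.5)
For part (1), I will fix $f\in C_0(X)$ and show that $\int f\,d\delta_{x_n}=f(x_n)\to0$. Given $\epsilon>0$, the set $K_\epsilon:=\{x\in X\mid |f(x)|\geq\epsilon\}$ is compact because $f$ vanishes at infinity. Since $x_n\to\infty$, there is $n_0$ with $x_n\notin K_\epsilon$ for all $n\geq n_0$, so $|f(x_n)|<\epsilon$ for those $n$. As $f$ was arbitrary, this is exactly $\wkslim_n\delta_{x_n}=0$ under the weak* topology $\sigma(M(X),C_0(X))$.

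For part (2), I will use the characterization recorded just before the lemma. A function $g\in C_b(X)$ lies in $C_0(X)$ if and only if $g(x_n)\to0$ for every sequence $x_n\to\infty$. For each $f\in C_0(X)$, the function $f\Psi$ is continuous because $\Psi$ is weak* continuous. The main point is to justify that $f\Psi$ is bounded, so that this characterization applies.

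For the direction ($\Rightarrow$), if $\Psi$ vanishes at infinity, then $f\Psi\in C_0(X)$, hence $f\Psi(x_n)=\int f\,d\Psi(x_n)\to0$ for every $f\in C_0(X)$. That is, $\wkslim_n\Psi(x_n)=0$.

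For the direction ($\Leftarrow$), I will argue boundedness by contradiction. If $\sup_x|f\Psi(x)|=\infty$, pick $x_n$ with $|f\Psi(x_n)|\to\infty$. If the sequence $(x_n)$ does not converge to infinity, some compact $K$ contains infinitely many $x_n$. A subsequence then converges in $K$, and continuity of $f\Psi$ contradicts unboundedness. If instead $x_n\to\infty$, the hypothesis gives $f\Psi(x_n)\to0$, again a contradiction. Passing to a subsequence covers the mixed case. Hence $f\Psi\in C_b(X)$, and it tends to $0$ along every sequence converging to infinity, so $f\Psi\in C_0(X)$.

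The only mildly delicate step is this boundedness argument. It relies on the dichotomy, valid in $\widehat X$ since $\widehat X$ is compact metrizable, that every sequence in $X$ either has a subsequence converging in $X$ or has a subsequence converging to $\infty$.
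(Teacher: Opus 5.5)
Your proposal is correct and follows the paper's proof: part (1) is the same observation that $f(x_n)\to0$ for every $f\in C_0(X)$, and part (2) uses the same sequential characterization of $C_0(X)$ applied to the continuous function $f\Psi$. The only difference is that you explicitly check that $f\Psi$ is bounded before invoking that characterization, a step the paper leaves implicit. Your check is valid, and boundedness also follows directly from the fact that the sets $\{|f\Psi|\geq\epsilon\}$ are compact.
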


\begin{proof}
For the first assertion, if $f\in C_0(X)$ and $x_n\to\infty$, then $\langle f,\delta_{x_n}\rangle=f(x_n)\to0$. Hence $\delta_{x_n}\to0$ weak* in $M(X)$. For the second assertion, if $\Psi$ vanishes at infinity and $f\in C_0(X)$,
then $f\Psi\in C_0(X)$, so
\[
\langle f,\Psi(x_n)\rangle=(f\Psi)(x_n)\to0
\]
whenever $x_n\to\infty$. Thus $\wkslim_n \Psi(x_n) = 0$. Conversely, assume $\wkslim_n\Psi(x_n) = 0$ for every sequence $x_n\to\infty$. Since $\Psi$ is weak* continuous, for each $f\in C_0(X)$ the scalar function $x\mapsto\langle f,\Psi(x)\rangle$ is continuous. The assumed sequential condition implies that this scalar function belongs to $C_0(X)$. Hence $\Psi$ vanishes at infinity.
\end{proof}

This yields an isometric isomorphism
\[
\varphi\colon C_0(X)\longrightarrow I_\infty(\widehat X), \qquad \varphi(f) = \overline{f},
\]
whose inverse is given by restriction to $X$. Thus, every $g\in C(\widehat X)$ decomposes uniquely as
\[
g=\left((g-g(\infty)\mathbbm{1}\right))+g(\infty)\mathbbm{1},
\]
and hence
\[
C(\widehat X) = I_\infty(\widehat X)\oplus \R\mathbbm{1} \cong C_0(X)\oplus \R\mathbbm{1},
\]
where $\mathbbm{1}$ denotes the function constantly equal to $1$ on
$\widehat X$. Similarly, let
\[
M(\widehat X,\{\infty\}) := \{\nu\in M(\widehat X)\mid \nu(\{\infty\})=0\},
\]
and consider the canonical embedding $j\colon X\hookrightarrow \widehat X$. We identify $M(X)$ with the subspace $M(\widehat X,\{\infty\})$ via the pushforward $j_*$, that is, by viewing measures on $X$ as measures on $\widehat{X}$ with no mass at $\infty$. Hence every $\nu\in M(\widehat X)$ decomposes uniquely as
\[
\nu = \left((\nu-\nu(\{\infty\})\delta_\infty\right)) + \nu(\{\infty\})\delta_\infty,
\]
and therefore
\[
M(\widehat X) = M(\widehat X,\{\infty\})\oplus \R\delta_\infty \cong M(X)\oplus \R\delta_\infty.
\]
At the level of probability measures, restriction to $X$ gives an affine weak*-to-weak* homeomorphism
\[
M_1^+(\widehat X)\longrightarrow M_{\leq 1}^+(X), \qquad \nu\longmapsto \nu|_X,
\]
whose inverse is
\[
\iota\colon M_{\leq 1}^+(X)\longrightarrow M_1^+(\widehat X), \qquad \iota(\eta):=j_*\eta+(1-\eta(X))\delta_\infty.
\]
Thus $\iota$ extends the embedding $j$ at the level of Dirac measures, that is, $\iota(\delta_x)=\delta_{j(x)}$ for $x\in X$, and sends the zero measure to the mass concentrated at infinity: $\iota(0)=\delta_\infty$.

\subsection{Compactification of the action and Koopman representations}

For a locally compact metric space $Y$, a continuous map $F\colon X \to Y$ is proper if and only if it admits a continuous extension $\widehat{F}\colon \widehat{X} \to \widehat{Y}$ such that $\widehat{F}|_X=F$ and $\widehat{F}(\infty_X)=\infty_Y$ (see \cite[Ch. I \S~10.3]{bourbaki1989}). In consequence, an action $S \acts X$ is an action by proper maps if and only if it extends to an action by continuous maps $S \acts \widehat{X}$ that fixes $\infty$. 

Let $S\acts X$ be an action by proper maps, and let $S\acts \widehat X$ be its (unique) extension fixing $\infty$. Consider the affine weak*-to-weak* homeomorphism
\[
M^+_1(\widehat{X},S)\longrightarrow M^+_{\leq1}(X,S), \qquad \nu\longmapsto \nu|_X.
\]
By the compact case recalled above, $M^+_1(\widehat X,S)$ is a Choquet simplex. Therefore, via this affine weak*-to-weak* homeomorphism, $M^+_{\leq1}(X,S)$ is also a Choquet simplex. Its extreme points are exactly the $S$-ergodic invariant subprobability measures, denoted by $E^+_{\leq1}(X,S)$. Moreover,
\[
M^+_{\leq1}(X,S)=\conv(M^+_1(X,S)\cup\{0\})
\qquad \text{and} \qquad E^+_{\leq1}(X,S)=E^+_1(X,S)\cup\{0\}.
\]

Let
\[
\kappa\colon S^{\operatorname{op}}\longrightarrow \mathcal B(C_0(X)) \qquad \text{and} \qquad \kappa_\infty\colon S^{\operatorname{op}}\longrightarrow \mathcal B(C(\widehat{X}))
\]
be the corresponding Koopman representations. Since $\widehat{s}(\infty)=\infty$ for every $s\in S$, the ideal $I_\infty(\widehat X)$ is $\kappa_\infty$-invariant. Therefore, through the isometric isomorphism $\varphi$, we obtain that
\[
\kappa(s) = \varphi^{-1} \circ \kappa_\infty(s) \circ \varphi
\qquad \text{for every}~s\in S.
\]
For any $f \in C_0(X)$, denote $f_\infty := \varphi(f)$. Notice that $f_\infty \in I_\infty(\widehat X)$.

\begin{lemma}
\label{lem:decomposition}
Let $S\acts X$ be an action by proper maps, and let $S\acts \widehat X$ be its (unique) extension fixing $\infty$. Then,
\[
C_0(X)=\Fix(\kappa)\oplus\wCob(\kappa)  \qquad \text{if and only if} \qquad C(\widehat{X})=\Fix(\kappa_\infty)\oplus\wCob(\kappa_\infty).
\]
\end{lemma}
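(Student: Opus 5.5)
The plan is to transport everything through the isometric isomorphism $\varphi\colon C_0(X)\to I_\infty(\widehat X)$ and the splitting $C(\widehat X)=I_\infty(\widehat X)\oplus\R\mathbbm{1}$. Both directions will then reduce to linear algebra once I have the two identifications
\[
\Fix(\kappa_\infty)=\varphi(\Fix(\kappa))\oplus\R\mathbbm{1}
\qquad\text{and}\qquad
\wCob(\kappa_\infty)=\varphi(\wCob(\kappa)).
\]

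\emph{First identification.} Every $s\in S$ fixes $\infty$, and constants are $\kappa_\infty$-invariant. Hence for $g=f_\infty+c\mathbbm{1}$ with $f\in C_0(X)$ and $c=g(\infty)$, we have
\[
\kappa_\infty(s)g=\kappa_\infty(s)f_\infty+c\mathbbm{1}.
\]
So $g\in\Fix(\kappa_\infty)$ if and only if $f_\infty\in\Fix(\kappa_\infty)\cap I_\infty(\widehat X)$. By the intertwining relation $\kappa(s)=\varphi^{-1}\circ\kappa_\infty(s)\circ\varphi$, this holds if and only if $f\in\Fix(\kappa)$.

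\emph{Second identification.} For the same $g$,
\[
g-\kappa_\infty(s)g=f_\infty-\kappa_\infty(s)f_\infty=\varphi\bigl(f-\kappa(s)f\bigr).
\]
Taking spans gives $\Cob(\kappa_\infty)=\varphi(\Cob(\kappa))$. Since $\varphi$ is an isometry onto the closed subspace $I_\infty(\widehat X)$, norm closures correspond, and $\wCob(\kappa_\infty)=\varphi(\wCob(\kappa))\subseteq I_\infty(\widehat X)$.

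\emph{Forward direction.} Assume $C_0(X)=\Fix(\kappa)\oplus\wCob(\kappa)$. Applying $\varphi$ and adding $\R\mathbbm{1}$ gives
\[
C(\widehat X)=\varphi(\Fix(\kappa))\oplus\varphi(\wCob(\kappa))\oplus\R\mathbbm{1}=\Fix(\kappa_\infty)+\wCob(\kappa_\infty).
\]
For directness, take $g\in\Fix(\kappa_\infty)\cap\wCob(\kappa_\infty)$. Then $g\in I_\infty(\widehat X)$, so its constant part $g(\infty)$ vanishes. Hence $g\in\varphi(\Fix(\kappa)\cap\wCob(\kappa))=\{0\}$.

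\emph{Converse.} Assume $C(\widehat X)=\Fix(\kappa_\infty)\oplus\wCob(\kappa_\infty)$ and let $f\in C_0(X)$. Write $f_\infty=a+b$ with $a\in\Fix(\kappa_\infty)$ and $b\in\wCob(\kappa_\infty)\subseteq I_\infty(\widehat X)$. Evaluating at $\infty$ gives $a(\infty)=0$, so $a\in\varphi(\Fix(\kappa))$ by the first identification. Pulling back by $\varphi^{-1}$ yields $f\in\Fix(\kappa)+\wCob(\kappa)$. Trivial intersection is inherited directly, since $\varphi$ is injective and maps $\Fix(\kappa)\cap\wCob(\kappa)$ into $\Fix(\kappa_\infty)\cap\wCob(\kappa_\infty)=\{0\}$.

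The only step needing care is the second identification. It must be checked that coboundaries of $\kappa_\infty$ carry no constant component, which uses that $\infty$ is fixed by $S$. It must also be checked that closure commutes with $\varphi$, which uses that $I_\infty(\widehat X)$ is closed and $\varphi$ is isometric. Everything else is bookkeeping.
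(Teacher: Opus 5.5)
Your proof is correct and follows essentially the same route as the paper. Both arguments transport $\Fix$ and $\wCob$ through the isometric isomorphism $\varphi$ onto $I_\infty(\widehat X)$. Both obtain $\Fix(\kappa_\infty)=\varphi(\Fix(\kappa))\oplus\R\mathbbm{1}$ and $\wCob(\kappa_\infty)=\varphi(\wCob(\kappa))$, using that $\infty$ is fixed and $\mathbbm{1}$ is invariant, and then conclude from $C(\widehat X)=I_\infty(\widehat X)\oplus\R\mathbbm{1}$. The paper phrases this through the restricted representation $\kappa_\infty^0$ on $I_\infty(\widehat X)$ and leaves the final direct-sum bookkeeping implicit; you have simply written that bookkeeping out in both directions.
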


\begin{proof}
Since $I_\infty(\widehat X)$ is a closed $\kappa_\infty$-invariant subspace of $C(\widehat{X})$, we can consider the restricted representation $\kappa_\infty^0\colon S^{\operatorname{op}} \to \mathcal{B}(I_\infty(\widehat{X}))$. The isometric isomorphism $\varphi\colon C_0(X)\to I_\infty(\widehat X)$ intertwines $\kappa$ with $\kappa_\infty^0$, that is,
\[
\varphi\circ\kappa(s)=\kappa_\infty^0(s)\circ\varphi
\qquad \text{for every}~s\in S.
\]
Therefore, for any $f \in C_0(X)$ and $s \in S$, we have that
\[
f_\infty = \kappa_\infty^0(s)f_\infty \qquad \iff \qquad f = \kappa(s) f,
\]
where $f_\infty = \varphi(f)$, and
\[
f_\infty - \kappa_\infty^0(s)f_\infty = \varphi(f) - \kappa_\infty^0(s)\varphi(f) = \varphi(f) - \varphi(\kappa(s)f) = \varphi(f - \kappa(s)f).
\]
Thus,
\[
\Fix(\kappa_\infty^0) = \varphi(\Fix(\kappa))
\qquad \text{and} \qquad \wCob(\kappa_\infty^0) = \varphi(\wCob(\kappa)).
\]

Since $C(\widehat{X})=I_\infty(\widehat{X})\oplus\R\mathbbm{1}$ and $\kappa_\infty(s)\mathbbm{1}=\mathbbm{1}$, we have
\[
\Fix(\kappa_\infty)=\Fix(\kappa_\infty^0)\oplus\R\mathbbm{1} \qquad \text{and} \qquad \wCob(\kappa_\infty)=\wCob(\kappa_\infty^0).
\]
Therefore,
\begin{align*}
C_0(X)=\Fix(\kappa)\oplus\wCob(\kappa)  &   \iff I_\infty(\widehat{X})=\Fix(\kappa_\infty^0)\oplus\wCob(\kappa_\infty^0)  \\
                                            &   \iff C(\widehat{X})=\Fix(\kappa_\infty)\oplus\wCob(\kappa_\infty).
\end{align*}
\end{proof}

Under the direct sum identification
\[
C(\widehat X) = I_\infty(\widehat X)\oplus \R\mathbbm{1} \cong C_0(X)\oplus \R\mathbbm{1},
\]
the Koopman representation $\kappa_\infty$ is the unitization of $\kappa$. Indeed, for every $s\in S$, $f\in C_0(X)$, and $\alpha\in\R$,
\[
\kappa_\infty(s)(\varphi(f)+\alpha\mathbbm{1}) = \varphi(\kappa(s)f)+\alpha\mathbbm{1}.
\]
In particular, the constant functions are fixed and $\kappa_\infty(s)\mathbbm{1}=\mathbbm{1}$ for every $s\in S$.

For a subset $A\subseteq X$, write $\operatorname{cl}_X(A)$ for its closure inside $X$, and $\operatorname{cl}_{\widehat X}(A)$ for its closure inside $\widehat X$. If $S\acts X$ is an action by proper maps, then for every $x\in X$,
\[
\operatorname{cl}_{\widehat X}(Sx) = \operatorname{cl}_X(Sx)
\]
whenever $\operatorname{cl}_X(Sx)$ is compact, while
\[
\operatorname{cl}_{\widehat X}(Sx) = \operatorname{cl}_X(Sx)\cup\{\infty\}
\]
whenever $\operatorname{cl}_X(Sx)$ is not compact, that is, when $x$ escapes to infinity.

\subsection{Pointwise ergodicity under compactification}
\label{subsection:pointwise-ergodicity}

An action by proper maps $S\acts X$ is pointwise uniquely ergodic if and only if the compactified action $S\acts \widehat X$ is pointwise uniquely ergodic. Indeed, if $\operatorname{cl}_X(Sx)$ is compact, then the invariant probability measures on $\operatorname{cl}_X(Sx)$ and on $\operatorname{cl}_{\widehat X}(Sx)$ coincide. If $\operatorname{cl}_X(Sx)$ is not compact, then
\[
\operatorname{cl}_{\widehat X}(Sx)=\operatorname{cl}_X(Sx)\cup\{\infty\}.
\]
Since $\infty$ is fixed, restriction to $X$ identifies $M_1^+(\operatorname{cl}_{\widehat X}(Sx),S)$ with $M_{\leq 1}^+(\operatorname{cl}_X(Sx),S)$, and $\delta_\infty$ corresponds to the zero subprobability. Hence $\delta_\infty$ is the unique invariant probability measure on $\operatorname{cl}_{\widehat X}(Sx)$ if and only if there is no non-zero $S$-invariant subprobability measure on $\operatorname{cl}_X(Sx)$, equivalently no $S$-invariant probability measure there.

Notice that a map $\Psi\colon X \to M^+_{\leq 1}(X)$ is weak* continuous and vanishes at infinity if and only if the map $\Psi_\infty\colon \widehat{X}\to M^+_1(\widehat{X})$ defined by
\[
\Psi_\infty(x) = 
\begin{cases}
\iota(\Psi(x))
&   \text{if}~x \in X \\
\delta_\infty   &   \text{if}~x=\infty
\end{cases}
\]
is weak* continuous. Indeed, let $g\in C(\widehat X)$ and write
\[
g=f_\infty+g(\infty)\mathbbm{1}, \qquad f=(g-g(\infty)\mathbbm{1})|_X\in C_0(X).
\]
For $x\in X$,
\[
\int_{\widehat X}g\,d\Psi_\infty(x) = \int_X f\,d\Psi(x)+g(\infty),
\]
while at $x=\infty$ this integral equals $g(\infty)$. Thus continuity of $\Psi_\infty$ at points of $X$ is equivalent to weak* continuity of $\Psi$, and continuity at $\infty$ is equivalent to $x\longmapsto f\Psi(x) := \int f\, d\Psi(x)$ vanishing at infinity for every $f\in C_0(X)$.

In particular, a pointwise uniquely ergodic action by proper maps $S\acts X$ is continuously pointwise ergodic and vanishes at infinity if and only if the map $\Phi_\infty\colon \widehat{X}\to M_1^+(\widehat{X},S)$ is weak* continuous, where $\Phi\colon X\to M_{\leq 1}^+(X)$ is the map associated to $S\acts X$. Observe that $\Phi_\infty$ is defined by
\[
\Phi_\infty(x) = 
\begin{cases}
j_*\Phi(x)      &   \text{if}~x \in X~\text{and}~\operatorname{cl}_X(Sx)~\text{is compact}\\
\delta_\infty   &   \text{otherwise}.
\end{cases}
\]
When the compactified action is pointwise uniquely ergodic, then the associated map $\widehat{X}\to M_1^+(\widehat{X},S)$ is $\Phi_\infty$.

\begin{theorem}
\label{thm:locally-compact-main}
Let $S$ be a semigroup, $X$ a separable locally compact metric space, and $S\acts X$ a pointwise invariant-measure admitting action by proper maps. Let $S\acts \widehat X$ be the
extension fixing $\infty$. Then the following are equivalent:
\begin{enumerate}
\item $S\acts X$ is continuously pointwise ergodic and vanishes at infinity.
\item $S\acts \widehat X$ is continuously pointwise ergodic.
\item $C_0(X)=\Fix(\kappa)\oplus\wCob(\kappa)$.
\item $M(X)=\Fix(\kappa^*)\oplus\wsCob(\kappa^*)$.
\end{enumerate}
If any of these equivalent conditions holds, then the mean ergodic projection $P\colon C_0(X)\to\Fix(\kappa)$ is given by
\[
(Pf)(x)=\int_X f\,d\Phi(x),
\qquad f\in C_0(X),\ x\in X.
\]
Equivalently, if $P_\infty$ denotes the mean ergodic projection of the compactified action, then
\[
P_\infty(f_\infty+\alpha\mathbbm{1}) = (Pf)_\infty+\alpha\mathbbm{1}.
\]
Moreover, under the identification of $M(X)$ with measures on $\widehat X$ with no mass at $\infty$, the dual projection satisfies
\[
\langle f,P^*\nu\rangle = \int_X\left(\int_X f\,d\Phi(x)\right)d\nu(x), \qquad f\in C_0(X),\ \nu\in M(X).
\]
\end{theorem}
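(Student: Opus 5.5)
The plan is to reduce everything to the compact case via the one-point compactification. The links $(1)\iff(2)$, $(2)\iff(3)$ and $(3)\iff(4)$ will each come from an earlier result, with Theorem~\ref{thm:main1} applied to $S\acts\widehat X$. The equivalence $(3)\iff(4)$ is Proposition~\ref{prop:ann-cob-koopman}. The equivalence of $(3)$ with $C(\widehat X)=\Fix(\kappa_\infty)\oplus\wCob(\kappa_\infty)$ is Lemma~\ref{lem:decomposition}. By Theorem~\ref{thm:main1}, applied to the compact metrizable space $\widehat X$, this decomposition is equivalent to $(2)$, provided $S\acts\widehat X$ is pointwise invariant-measure admitting. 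I will check that hypothesis first. For $x\in X$ with $\operatorname{cl}_X(Sx)$ compact, $\operatorname{cl}_{\widehat X}(Sx)=\operatorname{cl}_X(Sx)$ carries an invariant probability by the assumption on $S\acts X$. If $x$ escapes to infinity, or $x=\infty$, then $\infty\in\operatorname{cl}_{\widehat X}(Sx)$ is a fixed point, so $\delta_\infty$ is an invariant probability on that orbit closure.

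For $(1)\iff(2)$ I will use the discussion in \S\ref{subsection:pointwise-ergodicity}. There, pointwise unique ergodicity of $S\acts X$ is shown equivalent to that of $S\acts\widehat X$, with associated map $\Phi_\infty$. Also, weak* continuity of $\Phi$ together with vanishing at infinity is equivalent to weak* continuity of $\Phi_\infty$. One subtlety is that $(1)$ and $(2)$ each assume pointwise unique ergodicity on their own side; the cited equivalence transfers it, and the continuity statements then match.

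For the formulas, assume the conditions hold. Theorem~\ref{thm:main1} gives $(P_\infty g)(x)=\int_{\widehat X} g\,d\Phi_\infty(x)$. Take $g=f_\infty+\alpha\mathbbm{1}$ with $f\in C_0(X)$. For $x\in X$ the integral is $\int_X f\,d\Phi(x)+\alpha$, since $\Phi_\infty(x)=\iota(\Phi(x))$ and $f_\infty(\infty)=0$; at $x=\infty$ it is $\alpha$. Hence $P_\infty(f_\infty+\alpha\mathbbm{1})=(\tilde Pf)_\infty+\alpha\mathbbm{1}$, where $\tilde Pf:=f\Phi\in C_0(X)$.

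It remains to show $\tilde P=P$, the mean ergodic projection of $\kappa$. Restricting $P_\infty$ to the invariant ideal $I_\infty(\widehat X)$ gives a projection with range $\Fix(\kappa_\infty)\cap I_\infty(\widehat X)=\Fix(\kappa_\infty^0)$. Its kernel is $\wCob(\kappa_\infty)=\wCob(\kappa_\infty^0)$, using the identities in the proof of Lemma~\ref{lem:decomposition}. Conjugating by $\varphi$ gives a projection on $C_0(X)$ with range $\Fix(\kappa)$ and kernel $\wCob(\kappa)$, and a projection is determined by its range and kernel, so $\tilde P=P$. The dual formula then follows directly from the definition of the adjoint: $\langle f,P^*\nu\rangle=\int Pf\,d\nu=\int_X\bigl(\int_X f\,d\Phi(x)\bigr)d\nu(x)$. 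Alternatively, it follows from Theorem~\ref{thm:psi-general} with $\Psi=\Phi$, which is admissible because $\Phi$ is weak* continuous and vanishes at infinity.

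I expect the main obstacle to be bookkeeping rather than a new idea. The delicate points are verifying the pointwise invariant-measure admitting hypothesis for the compactification, and matching the restricted projection exactly, so that constants and the value at $\infty$ are handled correctly.
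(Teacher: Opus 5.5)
Your proposal is correct and follows the paper's proof essentially step for step. The paper gets $(3)\iff(4)$ from Proposition~\ref{prop:ann-cob-koopman} and $(1)\iff(2)$ from the compactification criterion of \S\ref{subsection:pointwise-ergodicity}. It gets $(2)\iff(3)$ from Theorem~\ref{thm:main1} on $\widehat X$ together with Lemma~\ref{lem:decomposition}, after checking that $S\acts\widehat X$ is pointwise invariant-measure admitting. Your range-and-kernel argument identifying $\varphi^{-1}\circ P_\infty|_{I_\infty(\widehat X)}\circ\varphi$ with $P$ makes explicit a step the paper uses without comment when it reads off $(Pf)(x)$ from $(P_\infty f_\infty)(x)$.
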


\begin{proof}
The equivalence between (3) and (4) follows from Proposition~\ref{prop:ann-cob-koopman}. We prove $(1)\iff(2)$. We already established that $S \acts X$ is pointwise uniquely ergodic if and only if $S \acts \widehat{X}$ is pointwise uniquely ergodic. Thus it suffices to prove that $\Phi\colon X \to M_{\leq 1}^+(X)$ is weak* continuous and vanishes at infinity if and only if $\Phi_\infty\colon \widehat{X} \to M_1^+(\widehat{X},S)$ is weak* continuous under the hypothesis that $S \acts \widehat{X}$ is pointwise uniquely ergodic. Recalling that $\Phi_\infty$ is weak* continuous if and only if $\Phi$ is weak* continuous and vanishes at infinity, the result follows.

We prove $(2)\iff(3)$. The assumption that $S\acts X$ is pointwise invariant-measure admitting implies that the compactified action $S\acts\widehat X$ is also pointwise invariant-measure admitting: for $x=\infty$ the measure is $\delta_\infty$, for points $x\in X$ escaping to infinity, again $\delta_\infty$ belongs to the compactified orbit closure, and for non-escaping points the invariant probability measures are the same as in $X$. By Theorem~\ref{thm:main1}, as $\widehat{X}$ is compact and metrizable, under the assumption that $M_1^+(\operatorname{cl}_{\widehat{X}}(Sx),S) \neq \emptyset$ for every $x \in \widehat{X}$, $S \acts \widehat{X}$ is continuously pointwise ergodic if and only if $C(\widehat{X})=\Fix(\kappa_\infty)\oplus\wCob(\kappa_\infty)$. By Lemma~\ref{lem:decomposition}, $C_0(X)=\Fix(\kappa)\oplus\wCob(\kappa)$ if and only if $C(\widehat{X})=\Fix(\kappa_\infty)\oplus\wCob(\kappa_\infty)$. Thus it suffices to prove that $M_1^+(\operatorname{cl}_{\widehat{X}}(Sx),S) \neq \emptyset$ for every $x \in \widehat{X}$ if and only if $M_1^+(\operatorname{cl}_{X}(Sx),S) \neq \emptyset$ for every $x \in X$ such that $\operatorname{cl}_{X}(Sx)$ is compact. This is direct, as we already observed that if $\operatorname{cl}_X(Sx)$ is compact, then the invariant probability measures on $\operatorname{cl}_X(Sx)$ and on $\operatorname{cl}_{\widehat X}(Sx)$ coincide, and if
$\operatorname{cl}_X(Sx)$ is not compact, then $\delta_\infty\in M^+_1(\operatorname{cl}_{\widehat X}(Sx),S)$.

Under these equivalent conditions, the compact theorem applied to $S\acts\widehat X$ gives
\[
(P_\infty g)(z)=\int_{\widehat X}g\,d\Phi_\infty(z), \qquad g\in C(\widehat X),\ z\in\widehat X.
\]
Taking $g=f_\infty$ and $z=x\in X$, we obtain
\[
(Pf)(x)=\int_X f\,d\Phi(x).
\]
The formula
\[
P_\infty(f_\infty+\alpha\mathbbm{1}) = (Pf)_\infty+\alpha\mathbbm{1}
\]
then follows from linearity and from $P_\infty\mathbbm{1}=\mathbbm{1}$. Finally, for $f\in C_0(X)$ and $\nu\in M(X)$,
\[
\langle f,P^*\nu\rangle = \langle Pf,\nu\rangle = \int_X\left(\int_X f\,d\Phi(x)\right)d\nu(x).
\]
\end{proof}

\section{Uniform systems and amenable semigroups}
\label{section5}

In this section, we establish a characterization of mean ergodicity in terms of convergence of averages.

\subsection{Amenable semigroups}

Let $(S,\cdot)$ be a countable and discrete semigroup, and let $\FSet$ denote the set of all non-empty finite subsets of $S$. The semigroup $S$ is \textbf{bicancellative} if, for all $s,t_1,t_2 \in S$,
\[
\left(t_1s = t_2s \qquad \text{or} \qquad st_1 = st_2\right)   \qquad  \text{implies} \qquad t_1 = t_2.
\]
If $S$ is bicancellative, a sequence $(F_n)_n$ in $\FSet$ is \textbf{right F{\o}lner} (resp. \textbf{left F{\o}lner}) if
\[
\lim_n \frac{|F_n \Delta F_ns|}{|F_n|}=0 \quad \left(\text{resp.} \quad \lim_n \frac{|sF_n \Delta F_n|}{|F_n|}=0 \right)   \quad   \text{for every}~s\in S.
\]
If $S$ is bicancellative, it is said to be \textbf{right amenable} (resp. \textbf{left amenable}) if there exists a right (resp. left) F{\o}lner sequence. Clearly, if $S$ is right amenable (resp. left amenable), then its opposite semigroup $S^{\operatorname{op}}$ is left amenable (resp. right amenable).

\begin{lemma}
\label{lem:folner-stability}
Let $S$ be bicancellative and let $(F_n)_n$ be a left F{\o}lner sequence. Then, for every $s\in S$, both $(sF_n)_n$ and $(F_ns)_n$ are left F{\o}lner sequences. Moreover, every subsequence of a left F{\o}lner sequence is left F{\o}lner, and the sequence obtained by alternating two left F{\o}lner sequences is again left F{\o}lner.
\end{lemma}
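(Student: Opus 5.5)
The plan is a direct verification from the definition of left F{\o}lner sequences, using bicancellativity to get that left and right multiplication by a fixed element are injective. The key consequence is that $|sF| = |F| = |Fs|$ for every finite $F$ and every $s \in S$. There are four claims, and I would handle them in order of difficulty.

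\emph{Subsequences and alternation.} These are immediate. The F{\o}lner ratio $|tF_n\Delta F_n|/|F_n|$ tends to $0$ along the full sequence for each fixed $t$, so it tends to $0$ along any subsequence. If $(F_n)_n$ and $(G_n)_n$ are left F{\o}lner, the alternated sequence $(H_k)_k$ has its even-indexed and odd-indexed terms forming sequences along which the ratio tends to $0$ for each fixed $t$. A real sequence whose even and odd subsequences both converge to $0$ converges to $0$.

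\emph{The translate $(F_ns)_n$.} Fix $t \in S$. Right multiplication by $s$ is injective, so $(A\Delta B)s = As\,\Delta\, Bs$; this needs injectivity to pass through set differences. Hence
\[
tF_ns\,\Delta\,F_ns = (tF_n\Delta F_n)s,
\]
whose cardinality equals $|tF_n\Delta F_n|$. Since also $|F_ns| = |F_n|$, the ratio for $(F_ns)_n$ is exactly the ratio for $(F_n)_n$, which tends to $0$.

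\emph{The translate $(sF_n)_n$.} This is the main obstacle, because left multiplication by $t$ does not commute with left multiplication by $s$. I would use the triangle inequality for symmetric differences,
\[
|t(sF_n)\,\Delta\, sF_n| \le |tsF_n\,\Delta\, F_n| + |F_n\,\Delta\, sF_n|.
\]
Then $ts \in S$ is a fixed element, so the first term is $o(|F_n|)$ by the F{\o}lner property applied to $ts$. The second term is $o(|F_n|)$ by the F{\o}lner property applied to $s$. Dividing by $|sF_n| = |F_n|$, which holds by injectivity of left multiplication, gives the limit $0$.
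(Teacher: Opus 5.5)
Your proposal is correct and follows the paper's proof essentially step for step. You handle left translates through the triangle inequality via $F_n$, applying the F{\o}lner property to $ts$ and to $s$, and right translates through the identity $t(F_ns)\,\Delta\,F_ns=(tF_n\,\Delta\,F_n)s$ together with right cancellativity, with subsequences and alternation dismissed as immediate.
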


\begin{proof}
The assertions about subsequences and alternating sequences are immediate.

For left translates, fix $s,t\in S$. Since left multiplication by $s$ is injective, $|sF_n|=|F_n|$, and
\[
|t(sF_n)\Delta sF_n| = |(ts)F_n\Delta sF_n| \leq |(ts)F_n\Delta F_n|+|F_n\Delta sF_n| = o(|F_n|).
\]
Thus $(sF_n)_n$ is left F{\o}lner. For right translates, right cancellativity gives $|F_ns|=|F_n|$, and
\[
|t(F_ns)\Delta F_ns| = |(tF_n)s\Delta F_ns| = |(tF_n\Delta F_n)s| = |tF_n\Delta F_n| = o(|F_n|).
\]
Thus $(F_ns)_n$ is left F{\o}lner.
\end{proof}

\subsection{Averages}

Assume that $S$ is a countable, discrete, bicancellative, and left amenable semigroup, $X$ is a separable locally compact metric space, and $S \acts X$ is an action by proper maps. Given $F \in \FSet$, define the \textbf{$F$-average operator} and the \textbf{dual $F$-average operator} by
\[
A_F := \frac{1}{|F|} \sum_{t \in F} \kappa(t) \qquad \text{and} \qquad A^*_F := \frac{1}{|F|} \sum_{t \in F} \kappa^*(t),
\]
respectively. Note that, as $\sum_{t \in F} \frac{1}{|F|} = 1$, $A_F$ belongs to $\conv(\kappa(S))$. Moreover, for any $s \in S$,
\[
A_F  \kappa(s) = A_{sF} \qquad \text{and} \qquad \kappa(s) A_F = A_{Fs},
\]
and, similarly,
\[
A^*_F  \kappa^*(s) = A^*_{Fs} \qquad \text{and} \qquad \kappa^*(s) A^*_F = A^*_{sF}.
\]

In this setting, an action by proper maps $S\acts X$ is called
\begin{itemize}
\item \textbf{uniform} if for every left F{\o}lner sequence $(F_n)_n$, the map
\[
f \in C_0(X) \longmapsto \lim_n A_{F_n} f,
\]
is well-defined in the uniform norm topology, and
\item \textbf{weak* mean continuous} if for every left F{\o}lner sequence $(F_n)_n$, the map
\[
\nu \in M^+_{\leq 1}(X) \longmapsto \wkslim_n A^*_{F_n} \nu,
\]
is well-defined and weak*-to-weak* continuous.
\end{itemize}

The definition of uniformity above extends the classical compact notion, since $C_0(X)=C(X)$ when $X$ is compact. To the best of our knowledge, weak* mean continuity in the form introduced here has not previously been studied.

The following result is a version of the Mean Ergodic Theorem due to Eberlein \cite[Theorem~3.1]{eberlein1949}. We follow  \cite[\S~2, Theorem~1.5]{krengel1985}.

\begin{theorem}
\label{thm:mean}
Let $S$ be a countable, discrete, bicancellative, and left amenable semigroup, $X$ a separable locally compact metric space, and $S \acts X$ an action by proper maps. Fix a left F{\o}lner sequence $(F_n)_n$. For $f \in C_0(X)$ and $g \in \Fix(\kappa)$, the following are equivalent:
\begin{enumerate}
\item $g = \lim_n A_{F_n} f$.
\item $g(x) = \lim_n A_{F_n} f(x)$ for every $x \in X$.
\end{enumerate}
As a consequence, the following are also equivalent:
\begin{enumerate}
\item[(a)] $f \in \wCob(\kappa)$.
\item[(b)] $\lim_n A_{F_n} f = 0$.
\item[(c)] $\lim_n A_{F_n} f(x) = 0$ for every $x \in X$.
\end{enumerate}
\end{theorem}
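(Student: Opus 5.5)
The implication $(1)\Rightarrow(2)$ is immediate, since uniform convergence implies pointwise convergence. For $(2)\Rightarrow(1)$ I would split the problem in two. First I would show that $f-g\in\wCob(\kappa)$. Second I would show that the averages $A_{F_n}$ converge to $0$ in norm on $\wCob(\kappa)$ and fix $g$. Then $A_{F_n}f=g+A_{F_n}(f-g)\to g$ in $\|\cdot\|_\infty$.

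\textbf{Step 1: $f-g\in\wCob(\kappa)$.} By Proposition~\ref{prop:ann-cob-koopman}, $\wCob(\kappa)=\preannil{\Fix(\kappa^*)}$, so it suffices to check that $\langle f-g,\nu\rangle=0$ for every $\nu\in M(X,S)$.
\begin{itemize}
\item For such $\nu$, the identity $\langle \kappa(t)f,\nu\rangle=\langle f,\kappa^*(t)\nu\rangle=\langle f,\nu\rangle$ gives $\langle A_{F_n}f,\nu\rangle=\langle f,\nu\rangle$ for every $n$.
\item The functions $A_{F_n}f$ are continuous and satisfy $|A_{F_n}f|\le\|f\|_\infty$. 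By hypothesis they converge pointwise to $g$.
\item Since $|\nu|$ is finite, dominated convergence (applied to $\nu^+$ and $\nu^-$ separately) gives $\langle A_{F_n}f,\nu\rangle\to\langle g,\nu\rangle$.
\end{itemize}
Hence $\langle g,\nu\rangle=\langle f,\nu\rangle$, which is what we need. I expect this step to be the only real point of the proof. It is where the countability of $(F_n)_n$ matters: we work with a sequence, not a net, so dominated convergence applies.

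\textbf{Step 2: norm convergence on $\wCob(\kappa)$.} Take a generator $h-\kappa(s)h$ of $\Cob(\kappa)$. Using $A_F\kappa(s)=A_{sF}$ and $|sF|=|F|$ from bicancellativity, we get
\[
\|A_{F_n}(h-\kappa(s)h)\|_\infty=\|A_{F_n}h-A_{sF_n}h\|_\infty\le \frac{|F_n\Delta sF_n|}{|F_n|}\|h\|_\infty\longrightarrow 0
\]
by the left F{\o}lner property. By linearity the same holds on all of $\Cob(\kappa)$. Since $\|A_F\|\le 1$ for every $F$, an $\epsilon/3$-argument extends the convergence to the norm closure $\wCob(\kappa)$. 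Finally, $g\in\Fix(\kappa)$ gives $A_{F_n}g=g$, which completes the decomposition above.

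\textbf{The consequences.} $(a)\Rightarrow(b)$ is exactly Step~2. $(b)\Rightarrow(c)$ is trivial. For $(c)\Rightarrow(a)$, I would apply the equivalence $(2)\Rightarrow(1)$, or directly Step~1, with $g=0\in\Fix(\kappa)$; this yields $f=f-0\in\wCob(\kappa)$.
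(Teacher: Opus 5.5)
Your proof is correct, but it takes a different route from the paper's.

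\textbf{The paper's route.} For $(2)\Rightarrow(1)$, the paper tests against \emph{all} $\nu\in M(X)$ to get weak convergence $A_{F_n}f\to g$. It then uses Mazur's theorem to place $g$ in $\cconv\{\kappa(s)f \mid s\in S\}$ and approximates $g$ by a convex combination $\sum_j\lambda_j\kappa(s_j)f$. Finally it runs the F{\o}lner estimate $\|A_{F_n}\kappa(s_j)f-A_{F_n}f\|_\infty\le\frac{|s_jF_n\Delta F_n|}{|F_n|}\|f\|_\infty$ on that combination. The coboundary estimate for $(a)\Rightarrow(b)$ is a separate computation, and $(b)\Rightarrow(a)$ is obtained directly from $f-A_{F_n}f\in\Cob(\kappa)$.

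\textbf{Your route.} You test only against invariant measures and use the annihilator identity $\wCob(\kappa)=\preannil{\Fix(\kappa^*)}$ from Proposition~\ref{prop:ann-cob-koopman} to get $f-g\in\wCob(\kappa)$. One F{\o}lner estimate on coboundaries (your Step~2) then serves both the main equivalence and $(a)\Rightarrow(b)$. The implication $(c)\Rightarrow(a)$ comes out of Step~1 with $g=0$.

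\textbf{Comparison.} Both arguments rest on Hahn--Banach: Mazur's theorem in the paper, the identity $\preannil{(N^\perp)}=\overline{N}$ in yours. In both, amenability enters only in the final norm estimate. The paper's intermediate conclusion is slightly stronger and more intrinsic: $g$ lies in the closed convex hull of the orbit of $f$, which is Eberlein's picture and makes no reference to invariant measures. That conclusion implies yours, since $f-\sum_j\lambda_j\kappa(s_j)f=\sum_j\lambda_j\bigl(f-\kappa(s_j)f\bigr)\in\Cob(\kappa)$. Your version is shorter because it reuses one estimate throughout and leans on the duality already established in Proposition~\ref{prop:ann-cob-koopman}.
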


\begin{proof}
The implication $(1) \implies (2)$ is clear. To prove $(2) \implies (1)$, assume that $g(x) = \lim_n A_{F_n} f(x)$ for every $x \in X$. Since $\|A_{F_n}f\|_\infty \leq \|f\|_\infty$ for every $n$, for every $\nu \in M(X)$, by the Dominated Convergence Theorem applied to $|\nu|$,
\[
\lim_n \langle A_{F_n}f,\nu\rangle = \lim_n \int{A_{F_n}f(x)}\,d\nu(x) = \int{g(x)}\,d\nu(x) = \langle g,\nu\rangle,
\]
showing that the weak limit $\wklim_n A_{F_n}f = g$ holds. Observe that $\{A_{F_n} f\}_n \subseteq \conv\{\kappa(s)f \mid s \in S\}$. Thus, $g \in \cconv\{\kappa(s)f \mid s \in S\}$, since by
Mazur's Theorem~\cite[Theorem~3.12]{rudin1991}, the norm and weak closure of a convex set in a Banach space coincide. Fix $\epsilon > 0$. The condition $g \in \cconv(\kappa(S)f)$ implies the existence of a finite convex combination $g_\epsilon = \sum_{j=1}^m \lambda_j \kappa(s_j)f$, with $\lambda_j > 0$ and $\sum_{j=1}^m \lambda_j = 1$, such that $\|g - g_\epsilon\| < \tfrac{\epsilon}{2}$. Notice that
\begin{align*}
\|A_{F_n} g_\epsilon - A_{F_n} f\|_\infty = \left\|\sum_{j=1}^m \lambda_j (A_{F_n} \kappa(s_j)f - A_{F_n} f)\right\|_\infty \leq \sum_{j=1}^m \lambda_j \|A_{F_n} \kappa(s_j)f - A_{F_n} f\|_\infty \\
= \sum_{j=1}^m \lambda_j\left\|A_{s_j F_n}f - A_{F_n}f\right\|_\infty  \leq    \sum_{j=1}^m \lambda_j\frac{\left|s_j F_n \Delta F_n\right|}{|F_n|}\|f\|_\infty.
\end{align*}
By left amenability, we can find $n_0$ such that $\sum_{j=1}^m \lambda_j\frac{\left|s_j F_n \Delta F_n\right|}{|F_n|}\|f\|_\infty < \frac{\epsilon}{2}$ for every $n \geq n_0$. As $g$ belongs to $\Fix(\kappa)$, $A_{F_n} g = g$ for all $n$. We therefore obtain
\[
\|g - A_{F_n} f\| \leq \|A_{F_n}(g - g_\epsilon)\| + \|A_{F_n} g_\epsilon - A_{F_n} f\| \leq \|g - g_\epsilon\| + \frac{\epsilon}{2} < \frac{\epsilon}{2} + \frac{\epsilon}{2} = \epsilon.
\]
Since $\epsilon$ was arbitrary, this shows that $\lim_n A_{F_n} f = g$ in the norm topology.

It remains to show that $(a) \iff (b)$. The equivalence $(b) \iff (c)$ is a particular case of $(1) \iff (2)$ applied to $g \equiv 0$.

Assume that $f \in \wCob(\kappa)$. Then, for any $\epsilon > 0$, there exists $\lambda_1,\dots,\lambda_m \in \R$, $s_1,\dots,s_m \in S$, $f_1,\dots,f_m \in C_0(X)$ such that
\[
\left\|f - \sum_{j=1}^m\lambda_j(f_j - \kappa(s_j)f_j)\right\|_\infty < \epsilon.
\]
Thus, for any left F{\o}lner sequence $(F_n)_n$,
\begin{align*}
\left\|A_{F_n}f\right\|_\infty \leq   \left\|A_{F_n}\sum_{j=1}^m\lambda_j(f_j - \kappa(s_j)f_j)\right\|_\infty + \epsilon  \leq   \sum_{j=1}^m|\lambda_j|\left\|A_{F_n}f_j - A_{F_n}\kappa(s_j)f_j\right\|_\infty + \epsilon  \\
                                    \leq    \sum_{j=1}^m|\lambda_j|\left\|A_{F_n}f_j - A_{s_j F_n}f_j\right\|_\infty + \epsilon  \leq    \sum_{j=1}^m|\lambda_j|\frac{\left|s_j F_n \Delta F_n\right|}{|F_n|}\|f_j\|_\infty + \epsilon,
\end{align*}
so
\[
\limsup_n \left\|A_{F_n}f\right\|_\infty \leq  \sum_{j=1}^m|\lambda_j|\lim_n\frac{\left|s_j F_n \Delta F_n\right|}{|F_n|}\|f_j\|_\infty + \epsilon = \epsilon,
\]
and since $\epsilon$ was arbitrary, we conclude that $\lim_n A_{F_n}f = 0$. Conversely, assume $\lim_n A_{F_n}f = 0$. Then, for any $n$,
\[
f-A_{F_n}f = \frac{1}{|F_n|} \sum_{t \in F_n} (f-\kappa(t)f) \in \Cob(\kappa),
\]
so
\[
f = f - \lim_n A_{F_n}f = \lim_n(f-A_{F_n}f) = \lim_n \frac{1}{|F_n|} \sum_{t \in F_n} (f-\kappa(t)f) \in \wCob(\kappa).
\]
\end{proof}

Next, we prove a version of the Krylov–Bogolyubov theorem adapted to our setting.

\begin{lemma}
\label{lem:krylov}
Let $S$ be a countable, discrete, bicancellative, and left amenable semigroup, $X$ a separable locally compact metric space, and $S \acts X$ an action by proper maps. Then, $S \acts X$ is pointwise invariant-measure admitting, that is, $M_1^+(\operatorname{cl}_X(Sx),S) \neq \emptyset$ for every $x \in X$ such that $\operatorname{cl}_X(Sx)$ is compact.
\end{lemma}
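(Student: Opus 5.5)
Fix $x\in X$ with $K:=\operatorname{cl}_X(Sx)$ compact and a left F{\o}lner sequence $(F_n)_n$. The plan is a Krylov--Bogolyubov construction: average Dirac masses along F{\o}lner sets and take a weak* limit point. Set
\[
\nu_n := A^*_{F_n}\delta_x = \frac{1}{|F_n|}\sum_{t\in F_n}\delta_{tx}.
\]
Each $\nu_n$ is a probability measure supported on the finite set $F_n x\subseteq K$. Since $K$ is compact metric, $M_1^+(K)$ is weak* compact and metrizable, so a subsequence $\nu_{n_k}$ converges weak* to some $\mu\in M_1^+(K)$. Viewed in $M(X)$, this $\mu$ is a probability measure with $\operatorname{supp}(\mu)\subseteq K$. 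Working inside $K$ rather than in $M^+_{\leq 1}(X)$ is the key point: no mass can escape, because $C(K)$ contains $\mathbbm{1}$ and every $g\in C(K)$ is the restriction of some $f\in C_0(X)$ by Tietze extension with compact support.

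Next we check invariance. Fix $s\in S$ and $f\in C_0(X)$. Then
\[
\langle f, s_*\nu_n-\nu_n\rangle = \langle \kappa(s)f - f,\nu_n\rangle = \langle f, (A^*_{sF_n}-A^*_{F_n})\delta_x\rangle,
\]
using $\kappa^*(s)A^*_{F}=A^*_{sF}$. This is bounded in absolute value by $\|f\|_\infty\,|sF_n\Delta F_n|/|F_n|$, which tends to $0$ by the left F{\o}lner property; bicancellativity is what guarantees $|sF_n|=|F_n|$ here. Passing to the limit along $n_k$ requires $\langle \kappa(s)f,\nu_{n_k}\rangle\to\langle \kappa(s)f,\mu\rangle$, which holds because $\kappa(s)f\in C_0(X)$. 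Hence $\langle f,s_*\mu\rangle=\langle f,\mu\rangle$ for all $f\in C_0(X)$, so $s_*\mu=\mu$.

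Finally, $\mu(K)=1$ and $\mu$ is $S$-invariant, so under the identification recalled in the preliminaries, $\mu\in M_1^+(\operatorname{cl}_X(Sx),S)\neq\emptyset$.

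The only delicate step is the orientation. We must make sure that the F{\o}lner condition matches the correct side of the averaging identity, that is, that $s_*$ applied to the average over $F$ equals the average over $sF$, which is exactly a left F{\o}lner comparison. We must also confirm that the limit measure has full mass, and this is where compactness of the orbit closure is used.
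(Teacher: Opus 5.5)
Your proposal is correct and follows the paper's proof essentially step for step. Both take a weak* limit point of $A^*_{F_n}\delta_x$ inside the compact metrizable set $M_1^+(\operatorname{cl}_X(Sx))$, and both prove invariance via $\kappa^*(s)A^*_F=A^*_{sF}$ and the bound $\|f\|_\infty\,|sF_n\Delta F_n|/|F_n|\to 0$.
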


\begin{proof}
Fix $x \in X$ such that $\operatorname{cl}_X(Sx)$ is compact, and fix a left F{\o}lner sequence $(F_n)_n$. Since $\operatorname{cl}_X(Sx)$ is compact and metrizable, $M_1^+(\operatorname{cl}_X(Sx))$ is weak* compact and metrizable. Hence there exists a subsequence $(F_{n_k})_k$ and a measure $\mu\in M_1^+(\operatorname{cl}_X(Sx))$ such that
\[
\wkslim_k A_{F_{n_k}}^*\delta_x = \mu.
\]
We claim that $\mu$ is $S$-invariant. Fix $s\in S$ and $f\in C_0(X)$. Then
\begin{align*}
\left|\left\langle f,\kappa^*(s)\mu-\mu\right\rangle\right| & =
\lim_k\left|
\left\langle f,\kappa^*(s)A_{F_{n_k}}^*\delta_x-A_{F_{n_k}}^*\delta_x\right\rangle
\right|  \\
    &   = \lim_k\left|\left\langle A_{sF_{n_k}}f-A_{F_{n_k}}f,\delta_x\right\rangle\right|   \leq    \lim_k  \frac{|sF_{n_k}\Delta F_{n_k}|}{|F_{n_k}|}\|f\|_\infty=0.
\end{align*}
Thus $\kappa^*(s)\mu=\mu$ for every $s\in S$, and hence $\mu\in M_1^+(\operatorname{cl}_X(Sx),S)$.
\end{proof}

Note that, for general semigroups $S$, even if $\operatorname{cl}_X(Sx)$ is compact, the existence of an $S$-invariant measure in $M_1^+(\operatorname{cl}_X(Sx),S)$ is not guaranteed.

\begin{theorem}
\label{thm:main2}
Let $S$ be a countable, discrete, bicancellative, and left amenable semigroup, $X$ a separable locally compact metric space, and $S \acts X$ an action by proper maps. The following are equivalent:
\begin{enumerate}
    \item $S \acts X$ is continuously pointwise ergodic and vanishes at infinity.
    \item $C_0(X) = \Fix(\kappa) \oplus \wCob(\kappa)$.
    \item $S \acts X$ is uniform.
    \item $M(X) = \Fix(\kappa^*) \oplus \wsCob(\kappa^*)$.
    \item $S \acts X$ is weak* mean continuous.
\end{enumerate}
Fix any left F{\o}lner sequence $(F_n)_n$. If any of these holds, then:
\begin{itemize}
\item The mean ergodic projection $P\colon C_0(X) \to \Fix(\kappa)$ is defined by
\[
Pf = \lim_n A_{F_n}f   \qquad   \text{for}~f \in C_0(X).
\]
\item The dual mean ergodic projection $P^*\colon M(X) \to \Fix(\kappa^*)$ is defined by
\[
P^*\nu = \wkslim_n A^*_{F_n}\nu \qquad   \text{for}~\nu \in M(X).
\]
\end{itemize}
\end{theorem}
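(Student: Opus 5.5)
By Lemma~\ref{lem:krylov}, the action is pointwise invariant-measure admitting. Theorem~\ref{thm:locally-compact-main} therefore gives (1)$\iff$(2)$\iff$(4) immediately. It remains to fit in (3) and (5). I would prove (2)$\Rightarrow$(3)$\Rightarrow$(2) and (4)$\Rightarrow$(5)$\Rightarrow$(2), identifying the limits along the way.

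\textbf{(2)$\Rightarrow$(3).} Fix a left F{\o}lner sequence $(F_n)_n$ and write $f=Pf+h$ with $Pf\in\Fix(\kappa)$ and $h\in\wCob(\kappa)$. Then $A_{F_n}Pf=Pf$ for every $n$, and $A_{F_n}h\to0$ in norm by Theorem~\ref{thm:mean}. Hence $A_{F_n}f\to Pf$ uniformly, which gives (3) and the formula $Pf=\lim_nA_{F_n}f$. Dualising, for $\nu\in M(X)$ we get $\langle f,A^*_{F_n}\nu\rangle=\langle A_{F_n}f,\nu\rangle\to\langle Pf,\nu\rangle$. So $A^*_{F_n}\nu\to P^*\nu$ weak*, and this gives the stated formula for $P^*$. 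Since $P^*$ is weak*-to-weak* continuous, its restriction to $M^+_{\le1}(X)$ is too, so (4) (equivalently (2)) implies (5).

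\textbf{(3)$\Rightarrow$(2).} Define $Lf:=\lim_nA_{F_n}f$ for a fixed left F{\o}lner sequence. This $L$ is linear with $\|L\|\le1$. Since $A_{F_n}\kappa(s)=A_{sF_n}$ and $\kappa(s)A_{F_n}=A_{F_ns}$, Lemma~\ref{lem:folner-stability} applies. Uniformity along every left F{\o}lner sequence makes the limit independent of the sequence: alternate two sequences and the limit must exist, so the two limits agree. Consequently $L\kappa(s)=L=\kappa(s)L$. Thus $\ran L\subseteq\Fix(\kappa)$, $L$ is the identity on $\Fix(\kappa)$ (so $L^2=L$), and $L$ kills $\Cob(\kappa)$ and hence $\wCob(\kappa)$ by continuity. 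Finally $f-Lf=\lim_n(f-A_{F_n}f)\in\wCob(\kappa)$, so $C_0(X)=\Fix(\kappa)+\wCob(\kappa)$. The sum is direct: if $g\in\Fix\cap\wCob$, then $g=Lg=0$.

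\textbf{(5)$\Rightarrow$(2).} This is the step I expect to be the main obstacle, because weak* continuity is assumed only on the positive unit ball $M^+_{\le1}(X)$, not on all of $M(X)$. Let $R\nu:=\wkslim_nA^*_{F_n}\nu$ on $M^+_{\le1}(X)$. The argument of Lemma~\ref{lem:krylov} shows $R\nu$ is $S$-invariant. Set $g(x):=\langle f,R\delta_x\rangle$. Since $\iota_X$ is continuous and $R$ is weak* continuous, $g$ is continuous. By definition $A_{F_n}f(x)=\langle f,A^*_{F_n}\delta_x\rangle\to g(x)$ pointwise.

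\emph{Vanishing at infinity.} If $x_k\to\infty$, then $\delta_{x_k}\to0$ weak* by Lemma~\ref{lem:vanishing-sequences}, and $R0=0$, so $g(x_k)\to0$; hence $g\in C_0(X)$.

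\emph{Invariance.} Because the same holds for the F{\o}lner sequence $(sF_n)_n$ and the limits along different sequences agree (by alternation, as before), $g(sx)=\lim_nA_{F_ns}f(x)=g(x)$. So $g\in\Fix(\kappa)$.

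Theorem~\ref{thm:mean} (pointwise convergence to a fixed function gives norm convergence) now yields $A_{F_n}f\to g$ uniformly. This is (3), and hence (2) by the previous step.
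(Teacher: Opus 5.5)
Your proposal is correct and follows essentially the same route as the paper: Lemma~\ref{lem:krylov} with Theorem~\ref{thm:locally-compact-main} for (1)$\iff$(2)$\iff$(4), the splitting $f=Pf+h$ with Theorem~\ref{thm:mean} for (2)$\Rightarrow$(3) and the dual formula, and alternation of F{\o}lner sequences for (3)$\Rightarrow$(2). For (5) you use pointwise limits of $A^*_{F_n}\delta_x$ together with Theorem~\ref{thm:mean}, as the paper does; the only differences are cosmetic (you identify the kernel directly rather than via Theorem~\ref{thm:mean}, and check $g\in C_0(X)$ by hand rather than through Theorem~\ref{thm:psi-general}). One harmless slip: in the invariance step the relevant sequence is $(F_ns)_n$, since $\kappa(s)A_{F_n}=A_{F_ns}$, not $(sF_n)_n$, but both are left F{\o}lner by Lemma~\ref{lem:folner-stability}.
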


\begin{proof}
We establish $(1) \iff (2)$, $(2) \iff (3)$, $(2) \iff (4)$, and $(3) \iff (5)$, which suffices to prove the equivalence of all assertions.

\medskip
\noindent
\textbf{$(1) \iff (2)$:} By Lemma~\ref{lem:krylov}, the condition $M^+_1(\operatorname{cl}_X(Sx),S) \neq \emptyset$ for every $x \in X$ such that $\operatorname{cl}_X(Sx)$ is compact is automatically fulfilled, and the equivalence between (1) and (2) follows from Theorem~\ref{thm:locally-compact-main}.

\medskip
\noindent
\textbf{$(2) \implies (3)$:} Assume that $C_0(X) = \Fix(\kappa) \oplus \wCob(\kappa)$, and let $P\colon C_0(X) \to \Fix(\kappa)$ be the mean ergodic projection. Then, for any $f \in C_0(X)$, we can write
\[
f = Pf + g,
\]
with $Pf \in \Fix(\kappa)$ and $g \in \wCob(\kappa)$. Therefore, for any left F{\o}lner sequence $(F_n)_n$, we have
\[
\lim_n A_{F_n} f = \lim_n A_{F_n}Pf + \lim_n A_{F_n}g = Pf,
\]
since $A_{F_n}Pf = Pf$ and, due to Theorem~\ref{thm:mean}, $\lim_n A_{F_n}g = 0$. As $f$ was arbitrary, we have that $S \acts X$ is uniform and $Pf = \lim_n A_{F_n}f$ for any $f \in C_0(X)$.

\medskip
\noindent
\textbf{$(3) \implies (2)$:} Given a left F{\o}lner sequence $(F_n)_n$, define
\[
\widetilde{P}\colon C_0(X) \longrightarrow C_0(X), \qquad f \longmapsto \widetilde{P}f := \lim_n A_{F_n}f.
\]
Clearly, $\widetilde{P}$ is a linear and bounded operator with $\|\widetilde{P}\| \leq 1$. In addition, $\widetilde{P}$ does not depend on the left F{\o}lner sequence as, by Lemma~\ref{lem:folner-stability}, a sequence obtained by alternating the elements of two given left F{\o}lner sequences is again a left F{\o}lner sequence. 

First, note that $\widetilde{P}f = f$ for every $f \in \Fix(\kappa)$. Second, for any $s \in S$,
\[
\kappa(s)\widetilde{P}f = \kappa(s) \lim_n A_{F_n}f = \lim_n \kappa(s)A_{F_n}f = \lim_n A_{F_ns}f = \widetilde{P}f,
\]
as $(F_n s)_n$ is also a left F{\o}lner sequence. Thus, $\ran(\widetilde{P}) \subseteq \Fix(\kappa)$. Combining these two assertions we obtain that $\ran(\widetilde{P}) = \Fix(\kappa)$ and $\widetilde{P}^2 = \widetilde{P}$. That $\ker(\widetilde{P}) = \wCob(\kappa)$ follows directly from Theorem~\ref{thm:mean}, as $\lim_n A_{F_n} f = 0$ if and only if $f \in \wCob(\kappa)$. Therefore, $\widetilde{P}$ is the mean ergodic projection $P$, and
\[
C_0(X) = \ran(\widetilde{P}) \oplus \ker(\widetilde{P}) = \Fix(\kappa) \oplus \wCob(\kappa).
\]

\medskip
\noindent
\textbf{$(2) \iff (4)$:} This follows directly from Proposition~\ref{prop:ann-cob-koopman}.

\medskip
\noindent
\textbf{$(3) \implies (5)$:} Assume that $S \acts X$ is uniform. Since $(3)$ implies $(2)$ and $(4)$, $C_0(X) = \Fix(\kappa) \oplus \wCob(\kappa)$ and $M(X) = \Fix(\kappa^*) \oplus \wsCob(\kappa^*)$. Let $P\colon C_0(X) \to \Fix(\kappa)$ be the mean ergodic projection, and let $P^*\colon M(X) \to \Fix(\kappa^*)$ be its adjoint, equivalently the
dual mean ergodic projection. Let $(F_n)_n$ be a left F{\o}lner sequence. We claim that
\[
P^*\nu = \wkslim_n A^*_{F_n} \nu \qquad \text{for } \nu \in M(X).
\]
Indeed, for any $f \in C_0(X)$, $Pf = \lim_n A_{F_n}f = \wklim_n A_{F_n}f$, so
\[
\lim_n \langle f,A^*_{F_n}\nu\rangle = \lim_n \langle A_{F_n}f,\nu\rangle = \langle Pf,\nu\rangle = \langle f,P^*\nu\rangle.
\]
Therefore, the map
\[
\nu \in M_{\leq 1}^+(X) \longmapsto \wkslim_n A^*_{F_n}\nu
\]
is exactly the restriction $\left.P^*\right\vert_{M_{\leq 1}^+(X)}$. As $P^*$ is weak*-to-weak* continuous, the result follows.

\medskip
\noindent
\textbf{$(5) \implies (3)$:} Assume that $S \acts X$ is weak* mean continuous, and let
\[
q\colon M_{\leq 1}^+(X)\longrightarrow M(X), \qquad \nu \longmapsto q(\nu) :=\wkslim_n A_{F_n}^*\nu
\]
for some fixed left F{\o}lner sequence $(F_n)_n$. Observe that $q$ does not depend on the left F{\o}lner sequence as a sequence obtained by alternating the elements of two given left F{\o}lner sequences is again a left F{\o}lner sequence (see Lemma~\ref{lem:folner-stability}). Define
\[
\Psi\colon X \longrightarrow M(X), \qquad x \longmapsto \Psi(x) := \wkslim_n A_{F_n}^*\delta_x.
\]
Then $\Psi=q\circ \iota_X$, and as $q$ is weak*-to-weak* continuous and $\iota_X$ is weak* continuous, $\Psi$ is weak* continuous as well. Moreover, if $x_n\to\infty$ in $X$, then
$\delta_{x_n}\to 0$ weak* in $M_{\leq 1}^+(X)$, and since
\[
q(0)=\wkslim_n A_{F_n}^*0=0,
\]
the weak*-to-weak* continuity of $q$ gives
\[
\Psi(x_n)=q(\delta_{x_n})\to q(0)=0.
\]
Hence $\Psi$ vanishes at infinity. By Theorem~\ref{thm:psi-general}, there exists a unique bounded linear operator $P\colon C_0(X) \to C_0(X)$ such that $P^* \circ \iota_X = \Psi$, which is defined by
\[
Pf(x) = \int{f}d\Psi(x).
\]
Fix $f \in C_0(X)$. Then, for any $x \in X$,
\[
\lim_n A_{F_n}f(x) = \lim_n \langle A_{F_n}f,\delta_x\rangle = \lim_n \langle f,A^*_{F_n}\delta_x\rangle = \langle f,\Psi(x)\rangle = Pf(x).
\]
Since, for any $s \in S$, $(F_n s)_n$ is also a left F{\o}lner sequence and
\[
Pf(x) = \lim_n A_{F_n s}f(x) = \lim_n (\kappa(s) A_{F_n}f)(x) = \lim_n (A_{F_n}f)(sx) = Pf(sx) = \kappa(s)Pf(x).
\]
Therefore, $Pf$ belongs to $\Fix(\kappa)$ and, by Theorem~\ref{thm:mean}, we have $\lim_n A_{F_n}f = Pf$. Since the left F{\o}lner sequence $(F_n)_n$ and the function $f$ were arbitrary, $S \acts X$ is uniform.
\end{proof}

\begin{remark}
A direct application of Theorem~\ref{thm:mean} shows that the following seemingly weaker property is equivalent to $S \acts X$ being uniform: there exists a left F{\o}lner sequence $(F_n)_n$ such that, for every $f \in C_0(X)$, there is $g \in \Fix(\kappa)$ for which the following pointwise convergence holds:
\[
\lim_n A_{F_n}f(x) = g(x) \qquad \text{for every}~x \in X.
\]
\end{remark}

\begin{remark}
If $X$ is compact, in order to obtain weak* mean continuity, it suffices to check that, for every left F{\o}lner sequence $(F_n)_n$, the restricted map
\[
\nu \in M^+_1(X) \longmapsto \wkslim_n A^*_{F_n} \nu,
\]
is well-defined and weak*-to-weak* continuous.
\end{remark}

\section{Quotients and factors}
\label{sec:quotients-factors}

In this section, we discuss quotients and factors. We begin by showing that probability measures detect continuity, while subprobability measures detect continuity together with properness. We then apply this principle to the compactified ergodic map $\Phi_\infty$ and, finally, to proper factor maps.

\subsection{Proper maps and related properties}

Let $X$ and $Y$ be separable locally compact metric spaces, and let $F\colon X\to Y$ be a map. Whenever $f\circ F\in C_b(X)$ for every $f\in C_b(Y)$, composition with $F$ defines the \textbf{pullback operator}
\[
F^*\colon C_b(Y)\longrightarrow C_b(X), \qquad f \longmapsto F^*f:=f\circ F.
\]
If $F^*$ is well-defined, then it is automatically linear and contractive:
\[
\|F^*f\|_\infty\leq\|f\|_\infty \qquad \text{for every }f\in C_b(Y).
\]
If $F$ is Borel measurable, the \textbf{pushforward} of $\nu\in M(X)$ under $F$ is the measure $F_*\nu\in M(Y)$ defined by
\[
(F_*\nu)(B):=\nu(F^{-1}(B)) \qquad \text{for every Borel set }B\subseteq Y.
\]
This defines a linear operator $F_*\colon M(X)\to M(Y)$, $\nu\mapsto F_*\nu$, whose restrictions to $M_{\leq 1}^+(X)$ and $M_1^+(X)$ are affine maps into $M_{\leq 1}^+(Y)$ and $M_1^+(Y)$, respectively. The \textbf{fiber correspondence} associated with $F$ is the set-valued map
\[
F^{-1}\colon Y\to 2^X, \qquad y \longmapsto F^{-1}(y) := F^{-1}(\{y\}) = \{x \in X: F(x) = y\},
\]
where empty fibers are allowed. We say that $F^{-1}$ has \textbf{closed values} if $F^{-1}(y)$ is a closed subset of $X$ for each $y \in Y$, and that it is \textbf{upper hemicontinuous} if, whenever $F^{-1}(y)\subseteq U$ with $U\subseteq X$ open, there exists a neighborhood $V$ of $y$ such that $F^{-1}(z)\subseteq U$ for every $z\in V$.

\begin{lemma}
\label{lem:compact-continuity-characterizations}
Let $X$ and $Y$ be compact metric spaces, and let $F\colon X\to Y$ be a map. The following are equivalent:
\begin{enumerate}
\item $F$ is continuous.
\item $F^*\colon C(Y)\to C(X)$ is well-defined.
\item $F_*\colon M_1^+(X)\to M_1^+(Y)$ is well-defined and weak*-to-weak* continuous.
\item $F^{-1} \colon Y \to 2^X$ has closed values and is upper hemicontinuous.
\end{enumerate}
\end{lemma}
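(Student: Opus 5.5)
I will prove the cycle $(1)\Rightarrow(2)\Rightarrow(1)$, $(1)\Rightarrow(3)\Rightarrow(1)$ and $(1)\Rightarrow(4)\Rightarrow(1)$, always reducing to sequences, since $X$ and $Y$ are compact metric spaces.

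\textbf{$(1)\iff(2)$.} If $F$ is continuous, then $f\circ F$ is continuous for every $f\in C(Y)$, so $F^*$ is well-defined. Conversely, suppose $F^*$ is well-defined and $x_n\to x$. For a fixed $y\in Y$, the function $f=d(\cdot,F(x))$ gives $d(F(x_n),F(x))=f(F(x_n))\to f(F(x))=0$. So $F$ is continuous.

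\textbf{$(1)\iff(3)$.} If $F$ is continuous, then it is Borel, and $\langle f,F_*\nu\rangle=\langle f\circ F,\nu\rangle$ for $f\in C(Y)$. Weak* continuity of $F_*$ on $M_1^+(X)$ follows because $f\circ F\in C(X)$. For the converse, the delicate point is what ``well-defined'' means when $F$ is not known to be measurable. I read (3) as including Borel measurability of $F$, so that $F_*\nu$ makes sense. Then $F_*\delta_x=\delta_{F(x)}$, and $x\mapsto \delta_{F(x)}=F_*\circ\iota_X(x)$ is weak* continuous. Since the Dirac embedding $\iota_Y$ is a homeomorphism onto its (closed) image, $F=\iota_Y^{-1}\circ F_*\circ\iota_X$ is continuous.

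\textbf{$(1)\iff(4)$.} If $F$ is continuous, the fibres $F^{-1}(y)$ are closed. For upper hemicontinuity, take $F^{-1}(y)\subseteq U$ with $U$ open. The set $K=X\setminus U$ is compact, so $F(K)$ is compact and does not contain $y$. Then $V=Y\setminus F(K)$ works.

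The converse is the main obstacle, because closed values plus upper hemicontinuity must be turned into sequential continuity. Suppose $x_n\to x$ but $F(x_n)\not\to F(x)$. By compactness of $Y$, pass to a subsequence with $F(x_n)\to z\neq F(x)$. Choose disjoint neighbourhoods of $z$ and $F(x)$, and consider the open set
\[
U=X\setminus\Bigl(\{x\}\cup\{x_n\mid n\in\mathbb N\}\setminus F^{-1}(z)\Bigr).
\]
I would rather argue as follows. The fibre $F^{-1}(z)$ is closed and does not contain $x$, since $F(x)\neq z$. Tail points $x_m$ with $F(x_m)\neq z$ accumulate only at $x\notin F^{-1}(z)$. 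Hence the set $C=\{x\}\cup\{x_n\mid F(x_n)\neq z\}$ is closed and disjoint from $F^{-1}(z)$, so $U=X\setminus C$ is an open set containing $F^{-1}(z)$. Upper hemicontinuity gives a neighbourhood $V$ of $z$ with $F^{-1}(w)\subseteq U$ for all $w\in V$. For large $n$ we have $F(x_n)\in V$. Then either $F(x_n)=z$ for infinitely many $n$, or $x_n\in F^{-1}(F(x_n))\subseteq U$, which contradicts $x_n\in C$. In the first case, $x_n\in F^{-1}(z)$ infinitely often and $x_n\to x$, so $x\in F^{-1}(z)$ because the fibre is closed. This gives $F(x)=z$, a contradiction. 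So $F$ is continuous.
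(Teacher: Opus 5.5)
Your proof is correct. For $(1)\iff(3)$ you argue exactly as the paper does, via $F=\iota_Y^{-1}\circ F_*\circ\iota_X$. Your reading of ``well-defined'' as including Borel measurability matches the paper, which only defines $F_*$ for Borel $F$.

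For $(1)\iff(2)$ and $(1)\iff(4)$ you take a more hands-on route. The paper cites Engelking for $(2)\Rightarrow(1)$. For $(4)$ it notes that the flip $(x,y)\mapsto(y,x)$ carries $\operatorname{Gr}(F)$ onto $\operatorname{Gr}(F^{-1})$. It then invokes two closed graph theorems: one for functions into the compact space $Y$, and one for correspondences into the compact space $X$. You instead test against $d(\cdot,F(x))$ and unwind both closed graph theorems into explicit sequence arguments.

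The paper's route is shorter and symmetric, and it never uses metrizability, so it works verbatim for compact Hausdorff spaces. Yours is self-contained and shows exactly where each compactness hypothesis enters. Compactness of $X$ is used in $(1)\Rightarrow(4)$, through $F(X\setminus U)$. Compactness of $Y$ is used in $(4)\Rightarrow(1)$, to extract the limit $z$.

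When writing it up, make three cleanups. Delete the abandoned first definition of $U$, and the stray ``For a fixed $y\in Y$'' in $(2)\Rightarrow(1)$. Then state the final step directly: any large $n$ with $F(x_n)\neq z$ would give $x_n\in U\cap C=\emptyset$, so $F(x_n)=z$ eventually, and closedness of $F^{-1}(z)$ forces $F(x)=z$.
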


\begin{proof}
We establish the equivalences $(1) \iff (2)$, $(1) \iff (3)$, and $(1) \iff (4)$.

\medskip
\noindent
\textbf{$(1) \iff (2)$.} Since $X$ and $Y$ are compact, we have $C_b(X)=C(X)$ and $C_b(Y)=C(Y)$. As $C(Y)$ generates the topology of $Y$, the equivalence follows from \cite[Proposition~1.4.9]{engelking1989}.

\medskip
\noindent
\textbf{$(1) \iff (3)$.} If $F$ is continuous, then it is Borel measurable, so $F_*$ is well-defined. Its weak*-to-weak* continuity follows from \cite[Theorem~15.14]{aliprantis2006}. Conversely, assume that $F_*$ is well-defined and weak*-to-weak* continuous. Let $\iota_X, \iota_Y$ be the corresponding Dirac embeddings. Since $F_*\circ\iota_X=\iota_Y\circ F$, we have $F = \iota_Y^{-1}\circ F_*\circ\iota_X$, where $\iota_Y^{-1}$ is defined on the subspace $\iota_Y(Y)\subseteq M_1^+(Y)$. Since $\iota_X$ and $F_*$ are continuous and $\iota_Y^{-1}$ is continuous on $\iota_Y(Y)$, it follows that $F$ is continuous.

\medskip
\noindent
\textbf{$(1) \iff (4)$.} Let $\operatorname{Gr}(F) = \{(x,y)\in X\times Y:y=F(x)\}$ and $\operatorname{Gr}(F^{-1}) = \{(y,x)\in Y\times X:x\in F^{-1}(y)\}$. The homeomorphism $\tau\colon X\times Y\to Y\times X$, $\tau(x,y):=(y,x)$, satisfies $\tau(\operatorname{Gr}(F)) = \operatorname{Gr}(F^{-1})$. By the Closed Graph Theorem~\cite[Theorem~2.58]{aliprantis2006}, $F$ is continuous if and only if $\operatorname{Gr}(F)$ is closed. On the other hand, since $X$ is compact Hausdorff, the Closed Graph Theorem for correspondences \cite[Theorem~17.11]{aliprantis2006} gives $\operatorname{Gr}(F^{-1})$ is closed if and only if $F^{-1}$ has closed values and is upper hemicontinuous. This proves the equivalence.
\end{proof}

We next give equivalent characterizations of continuity and properness.

\begin{lemma}
\label{lem:continuity-proper-characterizations}
Let $X$ and $Y$ be separable locally compact metric spaces, and let
$F\colon X\to Y$ be a map. The following are equivalent:
\begin{enumerate}
\item $F$ is continuous and proper.
\item $F$ admits a continuous extension $\widehat{F}\colon \widehat{X}\to\widehat{Y}$ such that $\widehat{F}(\infty_X)=\infty_Y$.
\item $F^*\colon C_0(Y)\to C_0(X)$ is well-defined.
\item $F_*\colon M_{\leq 1}^+(X)\to M_{\leq 1}^+(Y)$ is well-defined, affine, and weak*-to-weak* continuous.
\end{enumerate}
\end{lemma}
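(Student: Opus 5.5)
The plan is to route everything through the one-point compactifications and then apply Lemma~\ref{lem:compact-continuity-characterizations} to $\widehat F\colon\widehat X\to\widehat Y$. I will prove $(1)\iff(2)$, $(2)\iff(3)$ and $(2)\iff(4)$.

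\textbf{$(1)\iff(2)$.} This is exactly the cited Bourbaki characterization of proper maps recalled before Lemma~\ref{lem:decomposition}, so nothing new is needed here.

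\textbf{$(2)\iff(3)$.} Assume (2) and take $f\in C_0(Y)$. Then $\overline f\circ\widehat F\in C(\widehat X)$ and vanishes at $\infty_X$, so its restriction $f\circ F$ lies in $C_0(X)$ via the isomorphism $\varphi$.

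Conversely, assume $F^*C_0(Y)\subseteq C_0(X)$. Define $\widehat F$ by $\widehat F(\infty_X)=\infty_Y$, and extend $F^*$ to $C(\widehat Y)\to C(\widehat X)$ by $g\mapsto g\circ\widehat F$. Writing $g=\overline f+g(\infty_Y)\mathbbm 1$, we get $g\circ\widehat F=\overline{f\circ F}+g(\infty_Y)\mathbbm 1$, which is continuous. Lemma~\ref{lem:compact-continuity-characterizations} $(2)\Rightarrow(1)$ then gives continuity of $\widehat F$, hence (2).

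\textbf{$(2)\iff(4)$.} I will use the affine homeomorphisms $\iota_X\colon M^+_{\le1}(X)\to M^+_1(\widehat X)$ and $\iota_Y$ (the measure-level maps $\iota$ from Section~4, not the Dirac embeddings). Assuming (2), $\widehat F_*$ is weak* continuous on $M^+_1(\widehat X)$ by Lemma~\ref{lem:compact-continuity-characterizations}. Since $\widehat F^{-1}(\infty_Y)=\{\infty_X\}$, for $\eta\in M^+_{\le1}(X)$ we get
\[
\widehat F_*\iota_X(\eta)=j_*F_*\eta+(1-\eta(X))\delta_{\infty_Y}=\iota_Y(F_*\eta).
\]
Hence $F_*=\iota_Y^{-1}\circ\widehat F_*\circ\iota_X$ is well-defined, affine and weak* continuous.

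For the converse, first note that $F$ is Borel measurable, since $F_*$ is assumed well-defined on Dirac masses (or, alternatively, because continuity will follow). Then $F_*\delta_x=\delta_{F(x)}$. Continuity of $F$ follows as in the compact lemma: $x_n\to x$ gives $\delta_{F(x_n)}\to\delta_{F(x)}$ weak* in $M^+_{\le1}(Y)$. The delicate point is recovering $F(x_n)\to F(x)$ from this, when mass could escape in $Y$. It does not: if a subsequence $F(x_{n_k})$ left every compact set, then $\delta_{F(x_{n_k})}\to0\ne\delta_{F(x)}$. So the sequence stays in a compact set, and uniqueness of limits plus injectivity of the Dirac embedding gives $F(x_n)\to F(x)$.

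Properness comes from the point $0$. Suppose $x_n\to\infty$ in $X$. By Lemma~\ref{lem:vanishing-sequences}, $\delta_{x_n}\to0$ weak*, and $F_*0=0$ by linearity/affinity. Therefore $\delta_{F(x_n)}\to0$, which forces $F(x_n)\to\infty$: otherwise a subsequence converges to some $y\in Y$, and then $\delta_y=0$, a contradiction. So the extension $\widehat F$ with $\widehat F(\infty_X)=\infty_Y$ is sequentially continuous at $\infty_X$, and sequential continuity suffices since $\widehat X$ is metrizable. This gives (2).

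\textbf{Main obstacle.} The step I expect to be hardest is exactly this converse: seeing that the vanishing of $F_*$ at $0$ encodes properness, and handling the possible escape of mass in $Y$ when deducing continuity. Both are resolved by the subsequence argument above.
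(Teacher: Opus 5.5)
Your proposal is correct. For $(1)\iff(2)$, $(2)\iff(3)$ and $(2)\Rightarrow(4)$ you follow the paper essentially verbatim: the Bourbaki characterization, the splitting $g=\overline f+g(\infty_Y)\mathbbm 1$ combined with Lemma~\ref{lem:compact-continuity-characterizations}, and the intertwining identity $\widehat F_*\circ\iota_X=\iota_Y\circ F_*$.

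The real difference is in $(4)\Rightarrow(2)$.

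\emph{The paper's route.} It runs the same identity backwards. Since the measure-level maps $\iota_X,\iota_Y$ are affine weak* homeomorphisms onto $M_1^+(\widehat X)$ and $M_1^+(\widehat Y)$, hypothesis (4) says exactly that $\widehat F_*=\iota_Y\circ F_*\circ\iota_X^{-1}$ is well defined and weak* continuous. The compact lemma, through the Dirac embedding of $\widehat Y$, then gives continuity of $\widehat F$ everywhere, including at $\infty_X$, in one stroke. The zero subprobability is silently replaced by $\delta_{\infty_Y}$, so no escape-of-mass analysis is ever needed.

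\emph{Your route.} You argue by hand with sequences in $X$ and $Y$: continuity of $F_*$ at Dirac masses gives continuity of $F$, and continuity of $F_*$ at $0$ gives properness. This is longer, but it is more transparent, since it isolates precisely which part of the hypothesis encodes properness.

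Your worry about mass escaping in the continuity step is already settled by a fact recorded in the preliminaries. The map $y\mapsto\delta_y$ is a topological embedding of $Y$ into $M^+_{\le1}(Y)$, and $\delta_{F(x)}$ lies in its image. Hence $\delta_{F(x_n)}\to\delta_{F(x)}$ gives $F(x_n)\to F(x)$ directly, and escape of mass only matters when the limit is $0$, which is your properness step.

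Your aside on Borel measurability is unnecessary. You only use $F_*\delta_x=\delta_{F(x)}$, $F_*0=0$, and weak* continuity, all of which come straight from (4). Its first justification is also not valid as stated: well-definedness of $F_*$ on Dirac masses does not imply that $F$ is Borel measurable. Nothing in your argument depends on it.
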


\begin{proof}
Let $\widehat{F}\colon\widehat{X}\to \widehat{Y}$ denote the set-theoretic extension of $F$ defined by $\widehat{F}|_X=F$ and $\widehat F(\infty_X)=\infty_Y$. We establish the equivalences $(1) \iff (2)$, $(2) \iff (3)$, and $(2) \iff (4)$.

\medskip
\noindent
\textbf{$(1) \iff (2)$.} See \cite[Ch. I \S~10.3]{bourbaki1989}.

\medskip
\noindent
\textbf{$(2) \iff (3)$.} If
$f\in C_0(Y)$, then $\overline{f\circ F} = \overline f\circ\widehat F$, where $\overline{f\circ F}$ and $\overline{f}$ denote the extensions by zero in $\widehat{X}$ and $\widehat{Y}$, respectively. Hence continuity of $\widehat{F}$ implies $f\circ F\in C_0(X)$. Conversely, assume that $F^*\colon C_0(Y)\to C_0(X)$ is well-defined. Every $g\in C(\widehat{Y})$ can be written uniquely as $g=\overline f+g(\infty_Y)\mathbbm{1}$ for some $f\in C_0(Y)$. Therefore,
\[
g\circ\widehat F = \overline{F^*f}+g(\infty_Y)\mathbbm{1} \in C(\widehat X).
\]
Thus pullback by $\widehat F$ maps $C(\widehat Y)$ into $C(\widehat X)$. By Lemma~\ref{lem:compact-continuity-characterizations}, the map $\widehat F$ is continuous.

\medskip
\noindent
\textbf{$(2) \iff (4)$.} For $Z=X,Y$, let $\iota_Z\colon M_{\leq 1}^+(Z)\to M_1^+(\widehat Z)$ be the affine weak*-to-weak* homeomorphism
\[
\iota_Z(\eta) := (j_Z)_*\eta+\left(1-\eta(Z)\right)\delta_{\infty_Z}.
\]
The map $F$ is Borel measurable if and only if $\widehat F$ is Borel measurable. Whenever the pushforwards are defined, as $\widehat{F} \circ j_X = j_Y \circ F$, for any $\eta \in M^+_{\leq 1}(X)$,
\begin{align*}
\widehat{F}_*(\iota_X(\eta)) & = \widehat{F}_*((j_X)_*\eta+
\left((1-\eta(X)\right))\delta_{\infty_X}) \\
    &   = (j_Y)_* (F_*\eta)+
\left((1-(F_*\eta)(Y)\right))\delta_{\infty_Y} = \iota_Y(F_*\eta).
\end{align*}
Thus $\widehat{F}_* \circ \iota_X = \iota_Y \circ F_*$. Since $\iota_X$ and $\iota_Y$ are affine weak*-to-weak* homeomorphisms, condition $(4)$ is equivalent to $\widehat F_*\colon M_1^+(\widehat X)\to M_1^+(\widehat Y)$ being well-defined and weak*-to-weak* continuous. By Lemma~\ref{lem:compact-continuity-characterizations}, this is equivalent to the continuity of $\widehat F$.
\end{proof}

Denote by $X/F$ the quotient space associated with the equivalence relation
\[
x\sim_F x' \qquad \iff \qquad F(x)=F(x').
\]
Endow $X/F$ with the quotient topology, and let $\pi_F\colon X\to X/F$, $\pi_F(x):=[x]_F$, be the canonical quotient map. Endow $F(X)$ with the subspace topology inherited from $Y$, and define
\[
\widetilde{F}\colon X/F\longrightarrow F(X),
\qquad
\widetilde{F}([x]_F):=F(x).
\]
Then $\widetilde{F}$ is a well-defined bijection and $F=\widetilde F\circ\pi_F$.

\begin{lemma}
\label{lem:compact-quotient-map}
Let $X$ and $Y$ be compact metric spaces, and let $F\colon X\to Y$ be continuous. Then $\widetilde{F}\colon X/F \to F(X)$ is a homeomorphism. Hence, the pushforward $\widetilde{F}_*\colon M(X/F) \to M(F(X))$ is a linear weak*-to-weak* homeomorphism, and its restrictions to $M_{\leq 1}^+$ and $M_1^+$ are affine weak*-to-weak* homeomorphisms.
\end{lemma}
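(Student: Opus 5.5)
The plan is to prove first that $\widetilde F$ is a homeomorphism by the standard compact-to-Hausdorff argument. After that, the statements about pushforwards follow formally from Lemma~\ref{lem:compact-continuity-characterizations} applied to $\widetilde F$ and to its inverse.

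First I would record the basic topological facts. The quotient $X/F$ is compact, being the continuous image of the compact space $X$ under $\pi_F$. Since $F$ is continuous and $X$ is compact, $F(X)$ is a compact subset of the metric space $Y$, hence compact and metrizable. Next, $\widetilde F$ is continuous by the universal property of the quotient topology: a map out of $X/F$ is continuous exactly when its composition with $\pi_F$ is continuous, and $\widetilde F\circ\pi_F=F$ is continuous. We already know that $\widetilde F$ is a bijection. A continuous bijection from a compact space onto a Hausdorff space is closed, because closed subsets of the compact domain are compact, their images are compact, and compact subsets of a Hausdorff space are closed. Hence $\widetilde F$ is a homeomorphism. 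In particular, $X/F$ is homeomorphic to $F(X)$, so it is a compact metrizable space. This is the one point needing care, since the quotient topology is a priori only compact, and metrizability is what allows the measure-theoretic lemmas stated for compact metric spaces to apply.

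For the measure statements, I would apply Lemma~\ref{lem:compact-continuity-characterizations} (via (1)$\Rightarrow$(3)) to both $\widetilde F$ and $\widetilde F^{-1}$, which are continuous maps between compact metric spaces. The pushforward $\widetilde F_*\colon M(X/F)\to M(F(X))$ is linear with inverse $(\widetilde F^{-1})_*$, by functoriality $(G\circ H)_*=G_*\circ H_*$. Weak*-to-weak* continuity on all of $M$ follows directly from duality: for $g\in C(F(X))$ we have $\langle g,\widetilde F_*\nu\rangle=\langle g\circ\widetilde F,\nu\rangle$, and $g\circ\widetilde F\in C(X/F)$. Hence $\widetilde F_*$ is the adjoint of the bounded pullback operator $\widetilde F^*$, and is therefore weak*-to-weak* continuous. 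The same argument applies to the inverse, so $\widetilde F_*$ is a linear weak*-to-weak* homeomorphism.

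Finally, $\widetilde F_*$ maps positive measures to positive measures and preserves total mass, and the same holds for its inverse. Therefore it restricts to bijections of $M_{\leq1}^+$ and of $M_1^+$. These restrictions are affine, as restrictions of a linear map, and they are homeomorphisms for the subspace weak* topologies. I do not expect any real obstacle here; the only subtle point is the metrizability and Hausdorffness of $X/F$, which the homeomorphism argument above supplies.
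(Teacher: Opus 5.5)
Your proposal is correct and follows essentially the same route as the paper: you get continuity of $\widetilde F$ from the universal property of the quotient, use the compact-to-Hausdorff argument for the homeomorphism, identify $\widetilde F_*$ and $(\widetilde F^{-1})_*$ as mutually inverse adjoints of the bounded pullback operators, and use positivity and mass preservation for the affine restrictions. Your initial appeal to Lemma~\ref{lem:compact-continuity-characterizations} is superfluous, since the adjoint argument already gives weak*-to-weak* continuity on all of $M$.
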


\begin{proof}
Since $F=\widetilde{F}\circ\pi_F$ and $\pi_F$ is a quotient map, the universal property of the quotient topology implies that $\widetilde F$ is continuous (see \cite[Theorem~22.2]{munkres2000}). Moreover, $X/F$ is compact, being the continuous image of $X$ under $\pi_F$, while $F(X)$ is Hausdorff as a subspace of $Y$. Hence the continuous bijection $\widetilde{F}\colon X/F\to F(X)$ is a homeomorphism by \cite[Theorem~26.6]{munkres2000}. Thus, the pullback $\widetilde{F}^*\colon C(F(X)) \to C(X/F)$, $g\mapsto g\circ\widetilde F$, is an isometric isomorphism whose inverse is $(\widetilde F^{-1})^*$. By the Change of Variables Theorem~\cite[Theorem~13.46]{aliprantis2006}, first for positive measures and then by linearity and the Jordan decomposition, $\widetilde F_*=(\widetilde F^*)^*$. Similarly, $(\widetilde F^{-1})_* = \left(((\widetilde F^{-1})^*\right))^*$. Both maps are weak*-to-weak* continuous by the general weak*continuity of adjoints recalled in Section~\ref{sec:linear-operator}, and they are inverse to each other. Therefore, $\widetilde{F}_*\colon M(X/F)\to M(F(X))$ is a linear weak*-to-weak* homeomorphism. Since both pushforwards preserve positivity and total mass, $\widetilde{F}_*$ restricts to affine weak*-to-weak* homeomorphisms on $M_{\leq 1}^+$ and $M_1^+$.
\end{proof}

\subsection{The compactified ergodic quotient}

We now describe the space of ergodic measures as a quotient of the compactified system. We also identify the range of the induced pullback with the invariant functions and the kernel of the induced pushforward with the weak* coboundaries.

\begin{proposition}
\label{prop:compactified-ergodic-quotient}
Let $S$ be a semigroup, let $X$ be a separable locally compact metric space, and let $S\acts X$ be an action by proper maps. Let $S\acts\widehat X$ be the compactified action fixing $\infty$. Assume that $S\acts X$ is pointwise uniquely ergodic. Let $\Phi_\infty\colon \widehat X\to M_1^+(\widehat X,S)$ be the ergodic map of the compactified system. The following are equivalent:
\begin{enumerate}
\item $S\acts X$ is continuously pointwise ergodic and vanishes at infinity.
\item $\Phi_\infty\colon \widehat X\to M_1^+(\widehat X,S)$ is weak* continuous.
\item $(\Phi_\infty)_*\colon M_1^+(\widehat X)\to M_1^+\left((M_1^+(\widehat X,S)\right))$ is well-defined and weak*-to-weak* continuous.
\item $\Phi_\infty^{-1}\colon M_1^+(\widehat X,S)\to 2^{\widehat X}$
has closed values and is upper hemicontinuous.
\end{enumerate}
\end{proposition}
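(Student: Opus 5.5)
The plan is to reduce everything to Lemma~\ref{lem:compact-continuity-characterizations}, applied to the map $F=\Phi_\infty\colon \widehat X\to Y$, where $Y:=M_1^+(\widehat X,S)$. Once we check that both spaces are compact metric spaces, that lemma gives $(2)\iff(3)\iff(4)$ directly, and the equivalence $(1)\iff(2)$ is the observation already made in \S\ref{subsection:pointwise-ergodicity}.

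\textbf{Setting.} First I verify the hypotheses of Lemma~\ref{lem:compact-continuity-characterizations}. The space $\widehat X$ is compact and metrizable, since $X$ is separable, locally compact and metrizable. The space $M_1^+(\widehat X)$ is then weak* compact and metrizable. The subset $M_1^+(\widehat X,S)$ is weak* closed in it, because it is the intersection of $M_1^+(\widehat X)$ with the weak* closed subspace $M(\widehat X,S)$. Hence $Y$ is a compact metric space. It is nonempty, since it contains $\delta_\infty$. Since $S\acts X$ is pointwise uniquely ergodic, so is the compactified action, and its ergodic map is exactly $\Phi_\infty$, as noted in \S\ref{subsection:pointwise-ergodicity}. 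Therefore $\Phi_\infty$ is a well-defined map $\widehat X\to Y$.

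\textbf{$(1)\iff(2)$.} This is the remark preceding Theorem~\ref{thm:locally-compact-main}. A map $\Psi\colon X\to M^+_{\le 1}(X)$ is weak* continuous and vanishes at infinity if and only if $\Psi_\infty$ is weak* continuous. To see this, decompose $g=f_\infty+g(\infty)\mathbbm 1$ and compare $\int g\,d\Phi_\infty(x)=\int f\,d\Phi(x)+g(\infty)$ with the value $g(\infty)$ at $\infty$. Applying this to $\Psi=\Phi$ gives the equivalence.

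\textbf{$(2)\iff(3)\iff(4)$.} Here I apply Lemma~\ref{lem:compact-continuity-characterizations} with $X\rightsquigarrow\widehat X$ and $Y\rightsquigarrow M_1^+(\widehat X,S)$, using items (1), (3) and (4) of that lemma. Item (1) is continuity of $\Phi_\infty$, which is statement (2) here. Item (3) is well-definedness and weak*-to-weak* continuity of $(\Phi_\infty)_*\colon M_1^+(\widehat X)\to M_1^+(Y)$, which is statement (3) here. Item (4) is closed values together with upper hemicontinuity of the fiber correspondence $\Phi_\infty^{-1}\colon Y\to 2^{\widehat X}$, which is statement (4) here.

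\textbf{Main obstacle.} The only delicate point is the reading of (3). For $(\Phi_\infty)_*$ to be well-defined on all of $M_1^+(\widehat X)$, one needs Borel measurability of $\Phi_\infty$. This is exactly what the ``well-defined'' clause in Lemma~\ref{lem:compact-continuity-characterizations}(3) encodes, so no separate measurability argument is required. In the direction $(3)\Rightarrow(2)$ that lemma recovers continuity through the Dirac embeddings, namely $\Phi_\infty=\iota_Y^{-1}\circ(\Phi_\infty)_*\circ\iota_{\widehat X}$.
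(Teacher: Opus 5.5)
Your proposal is correct and matches the paper's proof. It obtains $(1)\iff(2)$ from the compactification criterion of \S\ref{subsection:pointwise-ergodicity}, and $(2)\iff(3)\iff(4)$ by applying Lemma~\ref{lem:compact-continuity-characterizations} to $\Phi_\infty$; your explicit check that $M_1^+(\widehat X,S)$ is a nonempty compact metric space, so that the lemma applies, is a detail the paper leaves implicit.
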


\begin{proof}
The equivalence between $(1)$ and $(2)$ follows from the criterion established in \S~\ref{subsection:pointwise-ergodicity}: $\Phi_\infty$ is weak* continuous if and only if $\Phi$ is weak* continuous and vanishes at infinity. Since the compactified action is pointwise uniquely ergodic, weak* continuity of $\Phi_\infty$ is exactly continuous pointwise ergodicity of $S\acts\widehat X$. The equivalence between $(2)$, $(3)$, and $(4)$ follows from Lemma~\ref{lem:compact-continuity-characterizations} applied to $\Phi_\infty$.
\end{proof}

If $X$ is compact and $M^+_1(X,S)$ is non-empty, the Choquet simplex $M^+_1(X,S)$ is said to be a \textbf{Bauer simplex} if $E^+_1(X,S)$ is closed in $M^+_1(X,S)$. In this case, the \textbf{resultant map}
\[
r\colon M^+_1(E^+_1(X,S)) \longrightarrow M^+_1(X,S), \qquad r(\lambda) := \int_{E^+_1(X,S)} {\operatorname{id}_{M(X)}}\, d\lambda,
\]
is an affine homeomorphism \cite[Proposition~11.1]{phelps2001}. Its inverse $r^{-1}$ assigns, to each $\mu \in M^+_1(X,S)$, the measure $\lambda = r^{-1}(\mu)$, called the \textbf{ergodic decomposition} of $\mu$. 

\begin{theorem}
\label{thm:main-quotient}
Let $S$ be a semigroup, let $X$ be a separable locally compact metric space, and let $S\acts X$ be an action by proper maps. Assume that $S\acts X$ is continuously pointwise ergodic and vanishes at infinity. Let $\Phi_\infty\colon \widehat{X}\to M_1^+(\widehat{X},S)$ be the ergodic map of the compactified system, let $\pi_\infty \colon \widehat{X}\to \widehat{X}/\Phi_\infty$ be the quotient map, and let $\widetilde{\Phi}_\infty\colon \widehat{X}/\Phi_\infty\to \Phi_\infty(\widehat{X})$ be the induced bijection. Then the following assertions hold.
\begin{enumerate}
\item $E_1^+(\widehat{X},S)$ is homeomorphic to the quotient space $\widehat{X}/\Phi_\infty$. Moreover, $M_1^+(\widehat{X},S)$ is a Bauer simplex affinely weak* homeomorphic to $M_1^+(\widehat{X}/\Phi_\infty)$.
\item For every $\nu \in M_1^+(\widehat{X})$,
\[
P_\infty^*\nu = \int_{\widehat{X}}\Phi_\infty\,d\nu = \int_{E_1^+(\widehat{X},S)} \operatorname{id}_{M(\widehat{X})}\,d(\Phi_\infty)_*\nu,
\]
where $P_\infty$ is the mean ergodic projection of the compactified system. In particular, if $\mu \in M_1^+(\widehat{X},S)$, then $(\Phi_\infty)_*\mu$ is its ergodic decomposition, and
\[
r_\infty^{-1}
=
\left.(\Phi_\infty)_*\right|_{M_1^+(\widehat{X},S)},
\qquad
r_\infty\circ(\Phi_\infty)_* = \left.P_\infty^*\right|_{M_1^+(\widehat{X})},
\]
where $r_\infty\colon
M_1^+(E_1^+(\widehat{X},S))\to M_1^+(\widehat{X},S)$ denotes the resultant map.

\item The pullback $\pi_\infty^*\colon C(\widehat{X}/\Phi_\infty)\to C(\widehat{X})$ is an isometric isomorphism onto $\Fix(\kappa_\infty)$. The pushforward $(\pi_\infty)_*\colon
M(\widehat X)\to M(\widehat X/\Phi_\infty)$ is a linear weak*-to-weak* continuous surjection and $\ker((\pi_\infty)_*) = \wsCob(\kappa_\infty^*)$. Moreover, $M(\widehat{X},S) \cong M(\widehat X/\Phi_\infty)$.
\end{enumerate}
\end{theorem}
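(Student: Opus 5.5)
Everything will be transferred to the compact system $S\acts\widehat X$. By Proposition~\ref{prop:compactified-ergodic-quotient} and the discussion in \S\ref{subsection:pointwise-ergodicity}, the hypothesis says that $S\acts\widehat X$ is continuously pointwise ergodic with ergodic map $\Phi_\infty$. Hence Theorem~\ref{thm:main1}, Proposition~\ref{prop:transitive-point} and Lemma~\ref{lem:convex} all apply on $\widehat X$. In particular:
\begin{itemize}
\item $\Phi_\infty(\widehat X)=E_1^+(\widehat X,S)$;
\item $(P_\infty g)(z)=\int g\,d\Phi_\infty(z)$ for $g\in C(\widehat X)$ and $z\in\widehat X$;
\item $P_\infty^*\nu=\int\Phi_\infty\,d\nu$ for $\nu\in M(\widehat X)$.
\end{itemize}

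For (1), I apply Lemma~\ref{lem:compact-quotient-map} to the continuous map $F=\Phi_\infty\colon\widehat X\to M_1^+(\widehat X,S)$, whose target is compact metrizable. This gives a homeomorphism $\widetilde\Phi_\infty\colon\widehat X/\Phi_\infty\to\Phi_\infty(\widehat X)=E_1^+(\widehat X,S)$. The set $E_1^+(\widehat X,S)$ is then the continuous image of a compact space, hence compact and so closed. Therefore $M_1^+(\widehat X,S)$ is a Bauer simplex. The resultant map $r_\infty$ is an affine homeomorphism from $M_1^+(E_1^+)$. Composing it with the affine homeomorphism $(\widetilde\Phi_\infty)_*$ from Lemma~\ref{lem:compact-quotient-map} gives the affine identification $M_1^+(\widehat X,S)\cong M_1^+(\widehat X/\Phi_\infty)$.

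For (2), I pair both sides with $g\in C(\widehat X)$. The change of variables theorem for $\Phi_\infty$ gives
\[
\int_{E_1^+}\langle g,\mu\rangle\,d(\Phi_\infty)_*\nu(\mu)=\int_{\widehat X}\langle g,\Phi_\infty(x)\rangle\,d\nu(x)=\langle P_\infty g,\nu\rangle.
\]
The integrand on the left is continuous on $E_1^+$. This proves the displayed formula, and also $r_\infty\circ(\Phi_\infty)_*=P_\infty^*$ on $M_1^+(\widehat X)$. If $\mu$ is invariant, then $P_\infty^*\mu=\mu$, because $\ran(P_\infty^*)=\Fix(\kappa_\infty^*)$ and $P^*_\infty$ is a projection. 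Hence $r_\infty((\Phi_\infty)_*\mu)=\mu$. Since $r_\infty$ is bijective, $(\Phi_\infty)_*\mu=r_\infty^{-1}(\mu)$, which is the ergodic decomposition.

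For (3), I first show that the pullback is isometric. Since $\pi_\infty$ is surjective, $\pi_\infty^*$ is isometric. Its range lies in $\Fix(\kappa_\infty)$ because $\Phi_\infty(sx)=\Phi_\infty(x)$, so $g\circ\pi_\infty$ is invariant.

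The main obstacle is surjectivity onto $\Fix(\kappa_\infty)$: an invariant $h$ must be constant on the fibers of $\Phi_\infty$. I would argue this through $P_\infty$. Since $h$ is invariant, $h=P_\infty h$, so $h(x)=\int h\,d\Phi_\infty(x)$, which depends only on $\Phi_\infty(x)$. Hence $h=g\circ\pi_\infty$ with $g=\widetilde h\circ\widetilde\Phi_\infty$, where $\widetilde h(\mu)=\langle h,\mu\rangle$. The function $g$ is continuous because $\widetilde\Phi_\infty$ is a homeomorphism and $\widetilde h$ is weak* continuous.

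The pushforward $(\pi_\infty)_*$ is the adjoint of $\pi_\infty^*$ by change of variables, hence weak*-to-weak* continuous. Because $\pi_\infty^*$ is an isometric embedding with closed range, the closed range theorem makes its adjoint surjective. Alternatively, surjectivity follows directly: every measure on the quotient is the pushforward of $\widetilde\Phi_\infty^{-1}$-related Dirac integrals via $P_\infty^*$ of a lift, or one can use Hahn--Banach.

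The kernel is $\ker((\pi_\infty^*)^*)=\annil{\ran(\pi_\infty^*)}=\annil{\Fix(\kappa_\infty)}=\wsCob(\kappa_\infty^*)$ by Proposition~\ref{prop:ann-cob-koopman}. Finally, Theorem~\ref{thm:main1} gives $M(\widehat X)=M(\widehat X,S)\oplus\wsCob(\kappa^*_\infty)$. So $(\pi_\infty)_*$ restricted to $M(\widehat X,S)$ is injective. It is also surjective, since the coboundary component lies in the kernel. This yields the isomorphism $M(\widehat X,S)\cong M(\widehat X/\Phi_\infty)$.
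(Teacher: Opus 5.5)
Your proposal is correct and follows the paper's proof essentially step for step. The shared steps are: transfer to the compactified system; Proposition~\ref{prop:transitive-point}, Lemma~\ref{lem:compact-quotient-map} and the resultant map for (1); change of variables against $\Phi_\infty$ for (2); and annihilators together with the dual decomposition from Theorem~\ref{thm:main1} for (3).

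The differences are minor. You show that invariant functions descend to the quotient via $h=P_\infty h$, whereas the paper uses that $\operatorname{supp}(\Phi_\infty(x))$ lies in both orbit closures. For $M(\widehat X,S)\cong M(\widehat X/\Phi_\infty)$ you stop at a weak*-continuous linear bijection. The paper also identifies the inverse as $T_\infty^*$, with $T_\infty=(\pi_\infty^*)^{-1}\circ P_\infty$, to get a weak* homeomorphism. Your muddled ``alternatively'' sentence on surjectivity of $(\pi_\infty)_*$ is unnecessary, since the closed range (or Hahn--Banach) argument already suffices.
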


\begin{proof}
We prove the three statements separately.

\medskip
\noindent
\emph{Statement (1).} Since $\Phi_\infty\colon \widehat{X} \to M_1^+(\widehat{X},S)$ is continuous and $\widehat{X}$ is compact, $\Phi_\infty(\widehat{X})$ is compact as well. By Proposition~\ref{prop:transitive-point}, $\Phi_\infty(\widehat{X}) = E_1^+(\widehat{X},S)$, and Lemma~\ref{lem:compact-quotient-map} implies that $\widehat{X}/\Phi_\infty \cong \Phi_\infty(\widehat{X}) = E_1^+(\widehat{X},S)$. Hence $\widehat{X}/\Phi_\infty$ is compact and metrizable. Moreover, as $E_1^+(\widehat{X},S)$ is compact (hence closed in $M_1^+(\widehat{X},S)$), $M_1^+(\widehat{X},S)$ is a Bauer simplex. By Lemma~\ref{lem:compact-quotient-map}, $\widetilde{\Phi}_\infty$ is a homeomorphism that induces an affine map $(\widetilde{\Phi}_\infty)_*\colon M_1^+(\widehat{X}/\Phi_\infty)\to M_1^+(E_1^+(\widehat{X},S))$ that is a weak*-to-weak* homeomorphism. Since $M_1^+(\widehat{X},S)$ is Bauer, the resultant map $r_\infty$ is an affine weak*-to-weak* homeomorphism. This proves the first two assertions.

\medskip
\noindent
\emph{Statement (2).} By Theorem~\ref{thm:main1} applied to $S\acts\widehat{X}$, $P_\infty^*\nu = \int_{\widehat X}\Phi_\infty\,d\nu$. Since $\Phi_\infty(\widehat X)=E_1^+(\widehat{X},S)$, for every $f\in C(\widehat X)$ the Change of Variables Theorem~\cite[Theorem~13.46]{aliprantis2006} gives
\begin{align*}
\left\langle
f,\int_{\widehat X}\Phi_\infty\,d\nu \right\rangle
&=
\int_{\widehat X}
\left\langle f,\Phi_\infty(x)\right\rangle\,d\nu(x) \\
&=
\int_{E_1^+(\widehat X,S)}
\left\langle f,\mu\right\rangle\,
d((\Phi_\infty)_*\nu)(\mu) =
\left\langle
f,
\int_{E_1^+(\widehat X,S)}
\operatorname{id}_{M(\widehat X)}
\,d(\Phi_\infty)_*\nu
\right\rangle.
\end{align*}
Since $f$ is arbitrary, the two Gelfand integrals coincide. If $\mu\in M_1^+(\widehat{X},S)$, then $P_\infty^*\mu=\mu$. Hence
\[
\mu = P_\infty^*\mu = \int_{\widehat X}\Phi_\infty\,d\mu = \int_{E_1^+(\widehat{X},S)} \operatorname{id}_{M(\widehat{X})}\,d(\Phi_\infty)_*\mu.
\]
By uniqueness of the ergodic decomposition, $(\Phi_\infty)_*\mu=r_\infty^{-1}(\mu)$. This gives the identities involving $r_\infty$.

\medskip
\noindent
\emph{Statement (3).} If $\widetilde{f}\in C(\widehat{X}/\Phi_\infty)$, then $\widetilde{f} \circ \pi_\infty$ is $\kappa_\infty$-invariant because $\pi_\infty(sx)=\pi_\infty(x)$ for all $s\in S$, $x\in\widehat{X}$. Thus $\ran(\pi_\infty^*) \subseteq \Fix(\kappa_\infty)$. Conversely, let $f \in \Fix(\kappa_\infty)$. Then $f$ is constant on each orbit closure. If $\pi_\infty(x)=\pi_\infty(x')$, then $\Phi_\infty(x) = \Phi_\infty(x')$. The common measure has non-empty support and
\[
\operatorname{supp}(\Phi_\infty(x)) \subseteq \operatorname{cl}_{\widehat{X}}(Sx) \cap \operatorname{cl}_{\widehat{X}}(Sx').
\]
Hence the constants taken by $f$ on the two orbit closures agree, and $f(x)=f(x')$. Therefore $f$ descends to a unique function on $\widehat{X}/\Phi_\infty$. Since $\pi_\infty$ is a quotient map, this function is continuous. Thus $\ran(\pi_\infty^*)=\Fix(\kappa_\infty)$. Since $\pi_\infty$ is surjective, $\ker(\pi_\infty^*)=\{0\}$. Finally, $\pi_\infty$ is a continuous surjection between compact Hausdorff spaces, so $\pi_\infty^*$ is an isometric embedding. Hence its adjoint $(\pi_\infty)_*=(\pi_\infty^*)^*$ is a linear weak*-to-weak* continuous surjection. Moreover,
\[
\ker((\pi_\infty)_*) = \annil{\ran(\pi_\infty^*)} = \annil{\Fix(\kappa_\infty)} = \wsCob(\kappa_\infty^*),
\]
where the last equality follows from Proposition~\ref{prop:ann-cob-koopman}. Since the compactified system is continuously pointwise ergodic, Theorem~\ref{thm:main1} gives
\[
M(\widehat X) = M(\widehat X,S)\oplus\wsCob(\kappa_\infty^*).
\]
Therefore, the restriction $\left.(\pi_\infty)_*\right|_{M(\widehat X,S)}\colon M(\widehat X,S) \to M(\widehat X/\Phi_\infty)$ is bijective. To see that the inverse is weak*-to-weak* continuous, set
\[
T_\infty := (\pi_\infty^*)^{-1}\circ P_\infty \colon C(\widehat X)\longrightarrow C(\widehat X/\Phi_\infty),
\]
where $(\pi_\infty^*)^{-1}$ is defined on
$\ran(\pi_\infty^*)=\Fix(\kappa_\infty)$. Then $T_\infty$ is bounded, so $T_\infty^*$ is weak*-to-weak* continuous. Moreover, $T_\infty\pi_\infty^*=I$ and $\pi_\infty^*T_\infty=P_\infty$. Taking adjoints and using $P_\infty^*\mu=\mu$ for $\mu\in M(\widehat X,S)$ shows that $T_\infty^*
=
\left(\left.(\pi_\infty)_*\right|_{M(\widehat X,S)}\right)^{-1}$. Thus the restriction of $(\pi_\infty)_*$ is a linear weak*-to-weak* homeomorphism. Hence, $M(\widehat X,S) \cong M(\widehat X/\Phi_\infty)$.
\end{proof}

\begin{remark}
If $X$ is compact, the vanishing-at-infinity condition is automatic, the point $\infty$ is isolated in $\widehat{X} = X \sqcup \{\infty\}$, and Theorem~\ref{thm:main-quotient} also holds with $\widehat{X}$ replaced by $X$ and all the $\infty$ subscripts omitted.
\end{remark}

\subsection{Proper factor maps}

Let $S\acts X$ and $S\acts Y$ be actions by proper maps on locally compact metric spaces. A map $\pi\colon X\to Y$ is called \textbf{$S$-equivariant} if
\[
\pi(sx)=s\pi(x) \qquad \text{for all }s\in S,~x\in X.
\]
A \textbf{factor map} is a continuous, surjective, $S$-equivariant map. In this case we say that $S\acts Y$ is a \textbf{topological factor} of $S\acts X$. A factor map $\pi\colon X\to Y$ is called a \textbf{proper factor map} if $\pi$ is proper. When $X$ is compact, every factor map $\pi\colon X\to Y$ is automatically proper.

\begin{proposition}
\label{prop:pue-descends-proper-factors}
Let $S$ be a countable, discrete, bicancellative, and left amenable semigroup. Let $S\acts X$ and $S\acts Y$ be actions by proper maps on separable locally compact metric spaces, and let $\pi\colon X\to Y$ be a proper factor map.

If $S\acts X$ is pointwise uniquely ergodic, then $S\acts Y$ is pointwise uniquely ergodic. Moreover, their pointwise ergodic maps satisfy
\[
\Phi_Y\circ\pi=\pi_*\circ\Phi_X.
\]
If, in addition, $S\acts X$ is continuously pointwise ergodic, then $S\acts Y$ is continuously pointwise ergodic. If $S\acts X$ also vanishes at infinity, then so does $S\acts Y$. In consequence, if $S\acts X$ is uniform, then $S\acts Y$ is uniform.
\end{proposition}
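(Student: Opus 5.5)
The plan is to lift invariant measures from $Y$ to $X$ by a Krylov--Bogolyubov averaging argument, and then transport every property through the pushforward $\pi_*$.

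\emph{Pointwise unique ergodicity of $Y$.} Fix $y\in Y$ and pick $x\in\pi^{-1}(y)$, using surjectivity. By equivariance $\pi(Sx)=Sy$. Since $\pi$ is continuous and proper, hence closed, $\pi(\operatorname{cl}_X(Sx))=\operatorname{cl}_Y(Sy)$. Properness also gives that $\operatorname{cl}_X(Sx)$ is compact if and only if $\operatorname{cl}_Y(Sy)$ is compact: the preimage of a compact set is compact, and images of compact sets are compact.

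\emph{Case of escaping $y$.} If $y$ escapes, then so does every preimage $x$, and there is nothing to prove on $Y$ beyond showing that $\operatorname{cl}_Y(Sy)$ supports no invariant probability. If $\nu$ were such a measure, its support would be a compact-or-not subset, so we lift instead. Take a left F{\o}lner sequence and form $\wkslim_k A^*_{F_{n_k}}\eta$ for some $\eta\in M_1^+(X)$ with $\pi_*\eta=\nu$, supported in $\pi^{-1}(\operatorname{cl}_Y(Sy))$. The limit is an invariant subprobability $\mu$ on $X$. Moreover $\pi_*\mu=\nu$, because $\pi_*$ is weak* continuous on subprobabilities (Lemma~\ref{lem:continuity-proper-characterizations}), $\pi_*A_F^*=A_F^*\pi_*$ by equivariance, and $\nu$ is invariant. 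In particular $\mu\neq 0$ and $\mu$ is a genuine probability.

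\emph{Reduction to a single orbit closure.} Decompose $\mu$ ergodically. Its ergodic components are supported on orbit closures of generic points $x'$ by Proposition~\ref{prop:transitive-point}, applied on $\widehat X$. Since they have mass one and live in $X$, those orbit closures are compact. Hence $\Phi_X(x')=\mu'$ and $\pi(x')\in\operatorname{cl}_Y(Sy)$.

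This is the delicate step, and I expect it to be the main obstacle. For uniqueness and nonexistence one needs the invariant measure on $\operatorname{cl}_Y(Sy)$ to come from $\operatorname{cl}_X(Sx)$ for one fixed $x$. A cleaner route is therefore to first prove that if $\nu\in M_1^+(\operatorname{cl}_Y(Sy),S)$, then $\nu=\pi_*\Phi_X(x)$ for every $x\in\pi^{-1}(y)$. To do this, average $\delta_x$ along a F{\o}lner sequence. Its limit points are invariant probabilities or subprobabilities supported in $\operatorname{cl}_X(Sx)$ (Lemma~\ref{lem:krylov}), hence equal to $\Phi_X(x)$, or to $0$ if $x$ escapes. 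Consequently $A^*_{F_n}\delta_x\to\Phi_X(x)$, and $A^*_{F_n}\delta_y=\pi_*A^*_{F_n}\delta_x\to\pi_*\Phi_X(x)$.

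The gap is that $\nu$ need not be a limit of averages of $\delta_y$. I would try to close it via the generic point provided by Proposition~\ref{prop:transitive-point} for an ergodic component of $\nu$: some $y'$ with $\operatorname{cl}(Sy')=\operatorname{supp}\nu'\subseteq\operatorname{cl}(Sy)$. One then compares with $y$ using that points in $\operatorname{cl}_X(Sx)$ have orbit closures inside $\operatorname{cl}_X(Sx)$, so that $\Phi_X$ is constant on $\operatorname{cl}_X(Sx)$ by uniqueness. This yields $\pi_*\Phi_X(x'')=\nu'$ for lifts $x''\in\operatorname{cl}_X(Sx)$ of $y'$, and therefore $\nu'=\pi_*\Phi_X(x)$. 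Hence every ergodic, and by Krein--Milman every, invariant probability on $\operatorname{cl}_Y(Sy)$ equals $\pi_*\Phi_X(x)$, and none exists when $x$ escapes. This gives both pointwise unique ergodicity of $Y$ and the identity $\Phi_Y\circ\pi=\pi_*\circ\Phi_X$.

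\emph{Continuity, vanishing at infinity, and uniformity.} Continuity follows because $\pi$ is a quotient map: it is closed, continuous and surjective. The map $\pi_*\circ\Phi_X$ is weak* continuous by Lemma~\ref{lem:continuity-proper-characterizations} and is constant on fibers, so $\Phi_Y$ is continuous. Vanishing at infinity passes in the same way, since $f\Phi_Y\circ\pi=(f\circ\pi)\Phi_X\in C_0(X)$; by properness, $y_n\to\infty$ lifts to $x_n\to\infty$, so we can apply Lemma~\ref{lem:vanishing-sequences}. Finally, uniformity follows from Theorem~\ref{thm:main2}.
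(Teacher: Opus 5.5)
There is a genuine gap in the central step: the pointwise unique ergodicity of $Y$. Your first lift places $\eta$ on $\pi^{-1}(\operatorname{cl}_Y(Sy))$. As you observe yourself, the resulting invariant measure and its ergodic components then live on orbit closures of points $x'$ that are unrelated to $x$. So neither non-existence in the escaping case nor uniqueness follows.

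The repair you propose does not close this gap. Proposition~\ref{prop:transitive-point} only provides a point $y'$ with $\operatorname{cl}_Y(Sy')=\operatorname{supp}(\nu')$. That is a topological transitivity statement and says nothing about the averages $A^*_{F_n}\delta_{y'}$, so you cannot identify $\nu'$ with their limit $\pi_*\Phi_X(x'')$. Without genericity, the claim $\pi_*\Phi_X(x'')=\nu'$ says that two invariant probability measures on $\operatorname{cl}_Y(Sy')$ coincide. That is uniqueness on $\operatorname{cl}_Y(Sy')$, which is the statement you are trying to prove, now for $y'$ instead of $y$. The argument is therefore circular at exactly the point you flagged as delicate.

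The missing idea uses a fact you already established, $\pi(\operatorname{cl}_X(Sx))=\operatorname{cl}_Y(Sy)$: lift $\nu$ to the orbit closure of the fixed point $x$, not to the full preimage.
\begin{itemize}
\item On the compactifications, $\widehat\pi(\operatorname{cl}_{\widehat X}(Sx))=\operatorname{cl}_{\widehat Y}(Sy)$. A continuous surjection of compact metric spaces is surjective on probability measures, so there is $\eta\in M_1^+(\operatorname{cl}_{\widehat X}(Sx))$ with $\widehat\pi_*\eta=\nu$.
\item Since $\operatorname{cl}_{\widehat X}(Sx)$ is closed and $S$-invariant, every $A^*_{F_n}\eta$ is still supported there. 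Hence any weak* accumulation point $\mu$ is an $S$-invariant probability on $\operatorname{cl}_{\widehat X}(Sx)$, as in Lemma~\ref{lem:krylov}.
\item Moreover $\widehat\pi_*\mu=\nu$, because $\widehat\pi_*A^*_{F_n}\eta=A^*_{F_n}\nu=\nu$ and $\widehat\pi_*$ is weak* continuous.
\item Pointwise unique ergodicity of $S\acts\widehat X$ forces $\mu=\Phi_{\widehat X}(x)$, hence $\nu=\widehat\pi_*\Phi_{\widehat X}(x)$.
\end{itemize}
This treats escaping and non-escaping points at once. If $x$ escapes, then $\Phi_{\widehat X}(x)=\delta_\infty$, which rules out any invariant probability on $\operatorname{cl}_Y(Sy)$. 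It also yields $\Phi_Y\circ\pi=\pi_*\circ\Phi_X$ directly, and makes the ergodic decomposition, generic points, and Krein--Milman unnecessary. This is the paper's argument.

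Once the identity is in place, the rest of your proposal is fine: continuity via the quotient map, vanishing at infinity via $(f\Phi_Y)\circ\pi=(f\circ\pi)\Phi_X$ and lifting sequences $y_n\to\infty$, and uniformity via Theorem~\ref{thm:main2}.
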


\begin{proof}
By Lemma~\ref{lem:continuity-proper-characterizations}, $\pi$ admits a continuous extension $\widehat{\pi}\colon\widehat{X} \to \widehat{Y}$ such that $\widehat{\pi}|_X=\pi$ and $\widehat{\pi}(\infty_X)=\infty_Y$. This extension is surjective and $S$-equivariant.

Assume first that $S\acts X$ is pointwise uniquely ergodic. Then the compactified action $S\acts\widehat X$ is pointwise uniquely ergodic. Let $\Phi_{\widehat{X}}$ denote its ergodic map. Fix $y\in\widehat Y$ and choose $x\in\widehat X$ such that $\widehat{\pi}(x)=y$. Set $K_x:=\operatorname{cl}_{\widehat X}(Sx)$, $L_y:=\operatorname{cl}_{\widehat Y}(Sy)$. By equivariance of $\widehat{\pi}$, $\widehat{\pi}(Sx)=S\widehat{\pi}(x)=Sy$. Continuity of $\widehat{\pi}$ and compactness of $K_x$ give $\widehat{\pi}(K_x)=L_y$. Hence, $\widehat{\pi}_*\colon M_1^+(K_x)\to M_1^+(L_y)$ is surjective \cite[Theorem~15.14]{aliprantis2006}. Thus, every probability measure on $L_y$ admits a probability lift to $K_x$.

The measure $\widehat{\pi}_*\Phi_{\widehat X}(x)$ belongs to $M_1^+(L_y,S)$, so this set is non-empty. We show that it is its only element. Let $\mu_y \in M_1^+(L_y,S)$ and choose $\nu_x \in M_1^+(K_x)$ such that $\widehat{\pi}_*\nu_x = \mu_y$. Fix a left F{\o}lner sequence $(F_n)_n$. Any weak* accumulation point $\mu_x$ of $(A_{F_n}^*\nu_x)_n$ is $S$-invariant as in Lemma~\ref{lem:krylov}. Moreover, equivariance and the invariance of $\mu_y$ give
\[
\widehat{\pi}_*A_{F_n}^{*}\nu_x = A_{F_n}^*\mu_y = \mu_y,
\]
and hence, by continuity of $\widehat{\pi}_*$ (see Lemma~\ref{lem:compact-continuity-characterizations}), $\widehat{\pi}_*\mu_x=\mu_y$. Pointwise unique ergodicity of $S\acts\widehat{X}$ implies $\mu_x = \Phi_{\widehat{X}}(x)$. In consequence, $\mu_y=\widehat{\pi}_*\Phi_{\widehat{X}}(x)$. Thus $L_y$ supports a unique invariant probability measure. We conclude that $S\acts\widehat{Y}$ is pointwise uniquely ergodic and that its ergodic map satisfies
\[
\Phi_{\widehat{Y}}\circ\widehat\pi = \widehat\pi_*\circ\Phi_{\widehat{X}}.
\]
By the equivalence between pointwise unique ergodicity of an action and of its compactification (see \S~\ref{subsection:pointwise-ergodicity}), $S\acts Y$ is pointwise uniquely ergodic. Under the canonical identifications of subprobabilities with probabilities on the compactifications, the preceding identity restricts to
\[
\Phi_Y\circ\pi=\pi_*\circ\Phi_X.
\]

Assume that $\Phi_X$ is weak* continuous. Since $\pi$ is continuous and proper, by Lemma~\ref{lem:continuity-proper-characterizations}, $\pi_*\colon M_{\leq 1}^+(X)\to M_{\leq 1}^+(Y)$ is weak*-to-weak* continuous. Hence $\pi_*\circ\Phi_X$ is weak* continuous. Moreover, by \cite[Theorem~A.38 and Theorem~A.57]{lee2013}, a continuous proper surjection between locally compact Hausdorff spaces is a quotient map. The identity $\Phi_Y\circ\pi=\pi_*\circ\Phi_X$ therefore implies that $\Phi_Y$ is weak* continuous (see \cite[Theorem~22.2]{munkres2000}). Next, suppose that $S\acts X$ also vanishes at infinity. Then $\Phi_{\widehat{X}}$ is weak* continuous by the compactification criterion. The identity $\Phi_{\widehat{Y}}\circ\widehat\pi = \widehat\pi_*\circ\Phi_{\widehat{X}}$ has a weak* continuous right-hand side. Since $\widehat\pi$ is a continuous surjection between compact Hausdorff spaces, it is a quotient map, and hence $\Phi_{\widehat{Y}}$ is weak* continuous. Applying the compactification criterion once more shows that $S\acts Y$ vanishes at infinity.

Finally, assume that $S\acts X$ is uniform. By Theorem~\ref{thm:main2}, uniformity of $S\acts X$ is equivalent to continuous pointwise ergodicity together with vanishing at infinity. By the previous results, these properties descend to $S\acts Y$. Another application of Theorem~\ref{thm:main2} shows that $S\acts Y$ is uniform.
\end{proof}

\begin{remark}
\label{remark:proper_factors}
Neither properness nor left amenability can be omitted from
Proposition~\ref{prop:pue-descends-proper-factors} in general.

To see that properness is essential, let $S$ be an infinite countable bicancellative semigroup, and let $S\acts Y$ be any action by proper maps. Endow $S$ with the discrete topology, set $X:=Y\times S$, and consider the diagonal action $s\cdot(y,t):=(sy,st)$. This is again an action by proper maps. By bicancellativity, every orbit in $X$ is a closed infinite discrete subset and carries no $S$-invariant probability measure. Indeed, any probability measure on such a countable orbit has an atom of positive mass, while invariance and right cancellativity would produce infinitely many atoms of the same positive mass. Hence, $S\acts X$ is continuously pointwise ergodic, with ergodic map $\Phi_X\equiv0$, and vanishes at infinity. The projection $\pi\colon X\to Y$, $\pi(y,t):=y$, is a factor map, but it is not proper, since $\pi^{-1}(\{y\})=\{y\}\times S$ is non-compact. Thus any action of $S$ by proper maps can occur as a non-proper factor of a continuously pointwise ergodic action that vanishes at infinity. In particular, taking $S=\Z$ and $Y=\{0,1\}^{\Z}$ with the shift shows that properness cannot be omitted even when $S$ is left amenable.

Left amenability is also essential, even in the compact setting. Indeed, Glasner and Weiss construct, for
$G=\operatorname{SL}(2,\Z)$, compact metric minimal actions $G\acts X$ and $G\acts Y$, together with a factor map $\pi\colon X\to Y$, such that $G\acts X$ is strictly ergodic, while $G\acts Y$ is not uniquely ergodic (see \cite[Theorem~3.3]{glasner2017}). Since both systems are minimal, $G\acts X$ is continuously pointwise ergodic, while $G\acts Y$ is not pointwise uniquely ergodic. Here $\pi$ is automatically proper, since $X$ is compact, but the acting group is
not amenable.
\end{remark}

\section{Examples and applications}
\label{sec:examples-applications}

We conclude with examples and applications of the preceding theory. An interval map first separates pointwise unique ergodicity from continuous pointwise ergodicity and motivates a three-step surgery by invariant deletions. Orientation-preserving circle homeomorphisms then illustrate the dependence on the acting semigroup, while rigid rotations allow explicit computation of the ergodic quotient, the mean ergodic projection, and the invariant and coboundary spaces. Translations model escape to infinity, and a finite action shows that the pointwise invariant-measure admitting hypothesis cannot be omitted. We finish with applications to entropy and topological emergence.

\subsection{A well-defined but discontinuous ergodic map}
\label{subsec:interval-example}

Let $T\colon[0,1]\to[0,1]$ be continuous and assume that
\[
T(0)=0, \qquad T(1)=1, \qquad 0<T(x)<x \quad \text{for every }x\in(0,1).
\]
We consider the action of $\N$ generated by $T$. For every $x\in[0,1)$, the sequence $(T^nx)_n$ decreases to $0$. Hence
\[
\Phi(x)
=
\begin{cases}
\delta_0,&x\in[0,1),\\
\delta_1,&x=1.
\end{cases}
\]
The action is pointwise uniquely ergodic, but it is not continuously pointwise ergodic. The invariant probability measures on $[0,1]$ are precisely
\[
M_1^+([0,1],\N) = \left\{t\delta_0+(1-t)\delta_1:t\in[0,1]\right\}.
\]

The only non-empty fibers of $\Phi^{-1}\colon M_1^+([0,1],\N)\to 2^{[0,1]}$ are $\Phi^{-1}(\delta_0)=[0,1)$ and $\Phi^{-1}(\delta_1)=\{1\}$. The fiber map is upper hemicontinuous but not closed-valued, since $\Phi^{-1}(\delta_0)$ is not closed. Compare Lemma~\ref{lem:compact-continuity-characterizations}.

The quotient $[0,1]/\Phi$ consists of the two classes $[0,1)$ and $\{1\}$, and its open sets are $\varnothing$, $\{[0,1)\}$, $[0,1]/\Phi$. Thus $[0,1]/\Phi$ is the Sierpi\'nski space and, in particular, is not Hausdorff. This illustrates why continuity of the ergodic map is needed to identify the quotient with a subspace of the space of measures. See Theorem~\ref{thm:main-quotient}.

If $T$ is a homeomorphism, one can consider the bilateral action $\Z\acts[0,1]$, $n\cdot x:=T^nx$. For $x\in(0,1)$, the inequality $T(x)<x$ implies $T^nx\to 0$ and $T^{-n}x\to 1$ as $n\to+\infty$. In consequence, $\operatorname{cl}_{[0,1]}(\Z x) = \{T^nx:n\in\Z\}\cup\{0,1\}$. The invariant probability measures on this orbit closure are precisely
\[
M_1^+\left((\operatorname{cl}_{[0,1]}(\Z x),\Z\right))
=
\left\{
t\delta_0+(1-t)\delta_1:
t\in[0,1]
\right\}.
\]
Therefore, unlike the forward $\N$-action, the bilateral $\Z$-action is not pointwise uniquely ergodic. This failure can also be seen from the dependence of orbit averages on the chosen F{\o}lner sequence. For every $x\in(0,1)$, as $n \to \infty$,
\[
\frac{1}{n}\sum_{k=0}^{n-1}\delta_{T^kx}
\longrightarrow\delta_0 \qquad \text{and} \qquad
\frac1n\sum_{k=-n+1}^{0}\delta_{T^kx}
\longrightarrow\delta_1.
\]
The two displayed F{\o}lner sequences yield different limits. Alternating them produces a F{\o}lner sequence along which the orbit averages do not converge. Hence the action is neither uniform nor weak* mean continuous, in agreement with Theorem~\ref{thm:main2}.

\subsection{Surgery by invariant deletions}
\label{subsec:surgeries}

The preceding example suggests a three-step procedure for extracting an increasingly regular ergodic core: first remove the indeterminacy of the ergodic map, then its discontinuities, and finally the invariant mass that persists at infinity. The resulting cores may be empty, and no maximality is claimed.

For an action $S\acts Y$ and $D\subseteq Y$, let $\operatorname{int}_{\mathrm{ci}}^Y(D)$ denote the union of all open sets $V\subseteq D$ such that $s^{-1}V=V$ for every $s\in S$. Thus $\operatorname{int}_{\mathrm{ci}}^Y(D)$ is the largest open completely $S$-invariant subset of $Y$ contained in $D$. Restrictions of actions by proper maps to such sets remain actions by proper maps.

Let now $S\acts X$ be a pointwise invariant-measure admitting action by continuous maps on a compact metric space. Thus every orbit closure supports an invariant probability measure. This assumption is automatic under the amenability hypotheses of Lemma~\ref{lem:krylov}.

\medskip
\noindent
\textbf{Step 1: removing indeterminacy.}
Set
\[
U_1 := \operatorname{int}_{\mathrm{ci}}^X
\left(\left\{x\in X: M_1^+\left((\operatorname{cl}_X(Sx),S\right))
\text{ is a singleton}\right\}\right).
\]
For $x\in U_1$, let $\mu_x$ be the unique invariant probability measure on $\operatorname{cl}_X(Sx)$ and define
\[
\Phi_{U_1}\colon U_1\longrightarrow M_{\leq 1}^+(U_1,S), \qquad \Phi_{U_1}(x):=\left.\mu_x\right|_{U_1}.
\]
Then $S\acts U_1$ is pointwise uniquely ergodic, with ergodic map $\Phi_{U_1}$. Indeed, compact orbit closures contained in $U_1$ retain their unique invariant probability measure, while invariant mass carried by the deleted boundary becomes the zero subprobability after restriction.

\medskip
\noindent
\textbf{Step 2: removing discontinuities.}
Set
\[
U_2
:=
\operatorname{int}_{\mathrm{ci}}^{U_1}
\left(
\left\{
x\in U_1:
\Phi_{U_1}\text{ is weak* continuous at }x
\right\}
\right).
\]
The restricted action $S\acts U_2$ is continuously pointwise ergodic, with ergodic map
\[
\Phi_{U_2}\colon U_2\longrightarrow M_{\leq 1}^+(U_2,S),
\qquad
\Phi_{U_2}(x):=\left.\Phi_{U_1}(x)\right|_{U_2}.
\]
Indeed, pointwise unique ergodicity is preserved under completely invariant deletion, and restriction of measures from $U_1$ to $U_2$ is weak*-to-weak* continuous.

\medskip
\noindent
\textbf{Step 3: removing persistent boundary mass.}
For a weak* continuous map
$\Psi\colon Y\to M_{\leq 1}^+(Y)$, let
\[
\operatorname{Acc}_\infty(\Psi)
:=
\left\{
\nu\in M_{\leq 1}^+(Y):
\begin{array}{c}
\text{there exists }y_n\to\infty\text{ in }Y\\
\text{such that }\Psi(y_n)\to\nu\text{ weak*}
\end{array}
\right\}.
\]
Set
\[
U_3
:=
\operatorname{int}_{\mathrm{ci}}^{U_2}
\left(
U_2\setminus
\bigcup_{\nu\in\operatorname{Acc}_\infty(\Phi_{U_2})}
\operatorname{supp}(\nu)
\right).
\]
Then $S\acts U_3$ is continuously pointwise ergodic, and its ergodic map
\[
\Phi_{U_3}\colon U_3\longrightarrow M_{\leq 1}^+(U_3,S),
\qquad
\Phi_{U_3}(x):=\left.\Phi_{U_2}(x)\right|_{U_3},
\]
vanishes at infinity. Indeed, a sequence escaping to infinity in $U_3$ either escapes to infinity already in $U_2$, in which case its limiting invariant mass was deleted, or accumulates on $U_2\setminus U_3$, in which case continuity and complete invariance force its limiting measure to be supported on the deleted boundary.

We have thus obtained $X\supseteq U_1\supseteq U_2\supseteq U_3$, where $S\acts U_1$ is pointwise uniquely ergodic, $S\acts U_2$ is continuously pointwise ergodic, and $S\acts U_3$ additionally vanishes at infinity.

Let $A:=X\setminus U_3$. Then $A$ is closed and completely $S$-invariant. If $A\neq\emptyset$, collapsing $A$ to one point gives a canonical identification $X/A\cong\widehat{U_3}$, under which the action induced on $X/A$ is the compactification of $S\acts U_3$. By Theorem~\ref{thm:locally-compact-main}, this compactified system is continuously pointwise ergodic.

Assume, in addition, that $S$ is countable, discrete, bicancellative, and left amenable, and that $A\neq\emptyset$. The functions on $X$ that descend to $X/A$ form the closed unital subalgebra
\[
\mathcal A_A
:=
\left\{
f\in C(X):
f|_A\text{ is constant}
\right\}
\cong C(X/A).
\]
For every left F{\o}lner sequence $(F_n)_n$ and every $f\in\mathcal A_A$, the averages
\[
A_{F_n}f(x)
=
\frac{1}{|F_n|}
\sum_{s\in F_n}f(sx)
\]
converge uniformly on $X$. More precisely, if $f|_A\equiv c$, then their uniform limit is
\[
L_Af(x)
=
\begin{cases}
\displaystyle
c+\int_{U_3}(f-c)\,d\Phi_{U_3}(x),
&x\in U_3,\\[2mm]
c,
&x\in A.
\end{cases}
\]

\medskip
\noindent
\textbf{The interval example.}
For the interval map above, the first step is vacuous, since every orbit closure is uniquely ergodic: $U_1=[0,1]$, $\Phi_{U_1}=\Phi$.

The only discontinuity of $\Phi_{U_1}$ occurs at $1$. Hence the second step gives $U_2=[0,1)$, $\Phi_{U_2}\equiv\delta_0$. Thus the restricted system on $[0,1)$ is continuously pointwise ergodic, but it does not vanish at infinity: if $x_n\to1$, then $\Phi_{U_2}(x_n)=\delta_0$. Hence, $\operatorname{Acc}_\infty(\Phi_{U_2}) = \{\delta_0\}$.

The third step therefore removes $\operatorname{supp}(\delta_0)=\{0\}$ and gives $U_3=(0,1)$, $\Phi_{U_3}\equiv 0$. Thus $[0,1]=U_1\supseteq U_2=[0,1)\supseteq U_3=(0,1)$
realizes the successive passage from pointwise unique ergodicity to continuous pointwise ergodicity, and then to continuous pointwise ergodicity with vanishing at infinity.

The total deleted set is $A=\{0,1\}$, and therefore $[0,1]/\{0,1\}\cong\mathbb{S}^1$, $\mathcal A_A
=
\{f\in C([0,1]):f(0)=f(1)\}$. It follows that, whenever $f(0)=f(1)=c$,
\[
\frac1n\sum_{j=0}^{n-1}f(T^jx)
\longrightarrow c
\]
uniformly on $[0,1]$. This condition is also necessary. Indeed, the pointwise limit of the averages is
\[
x\longmapsto
\begin{cases}
f(0),&x\in[0,1),\\
f(1),&x=1.
\end{cases}
\]
Uniform convergence would force this limit to be continuous, since every average is continuous. Hence
\[
\frac{1}{n}\sum_{j=0}^{n-1}f\circ T^j \text{ converges uniformly on }[0,1] \quad\iff\quad
f(0)=f(1).
\]

\subsection{Orientation-preserving circle homeomorphisms}
\label{subsec:circle-homeomorphisms}

Let $T\colon\mathbb{S}^1\to \mathbb{S}^1$, $\mathbb{S}^1:=\R/\Z$, be an orientation-preserving homeomorphism, and consider the forward action $\N\acts\mathbb{S}^1$ given by $n\cdot x:=T^n x$. Denote by $\rho(T)\in\R/\Z$ the rotation number of $T$. The following is a direct consequence of the classification of orientation-preserving circle homeomorphisms. See
\cite[Chapter~11]{katok1995}.

\begin{proposition}
\label{prop:circle-homeomorphisms}
The forward action $\N\acts\mathbb{S}^1$ is pointwise uniquely ergodic. Moreover:
\begin{enumerate}
\item Assume that $\rho(T)$ is irrational. Then $T$ is uniquely ergodic. Hence its ergodic map is constant and the action is continuously pointwise
ergodic.
\item Assume that $\rho(T)=\frac pq$, where $p$ and $q$ are coprime. Then every periodic orbit has period $q$, and the $\omega$-limit set of every $x\in\mathbb{S}^1$ is a periodic
orbit, denoted by $\omega(x)$. The ergodic map is
\[
\Phi(x) = m_{\omega(x)} :=
\frac{1}{q} \sum_{j=0}^{q-1}\delta_{T^jy},
\qquad \text{for}~y\in \omega(x).
\]
Furthermore:
\begin{enumerate}
\item $T$ is uniquely ergodic if and only if it has exactly one periodic orbit.
\item The forward action is continuously pointwise ergodic if and only if either $T$ has exactly one periodic orbit or $T^q=\operatorname{id}_{\mathbb{S}^1}$.
\item If $T$ has at least two periodic orbits and
$T^q\neq\operatorname{id}_{\mathbb{S}^1}$, then $\Phi$ is discontinuous.
\end{enumerate}
\end{enumerate}
\end{proposition}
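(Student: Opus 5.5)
The proof rests on the classical facts from \cite[Chapter~11]{katok1995}, used as black boxes. For irrational $\rho(T)$, $T$ is uniquely ergodic. For $\rho(T)=p/q$, every periodic orbit has period $q$, every invariant measure is supported on $\Fix(T^q)$, and every forward orbit converges to a periodic orbit.

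Since $\mathbb{S}^1$ is compact, Lemma~\ref{lem:krylov} makes the action pointwise invariant-measure admitting. Pointwise unique ergodicity is therefore equivalent to each orbit closure carrying at most one invariant probability measure. In the irrational case this is immediate: any invariant probability on $\operatorname{cl}(\N x)$ is an invariant probability on $\mathbb{S}^1$, so it is the unique one. Thus $\Phi$ is constant, and the action is continuously pointwise ergodic. In the rational case, fix $x$. I will show that $\operatorname{cl}(\N x)=\{T^nx\}_{n\ge0}\cup\omega(x)$ with $\omega(x)$ a single periodic orbit. The point is that $T^q$ restricted to $\mathbb{S}^1$ minus its fixed set acts on each complementary arc as a monotone map without fixed points, so $T^{qn}x$ converges to an endpoint. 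Invariant measures are supported on periodic points, and the only periodic points in $\operatorname{cl}(\N x)$ lie in $\omega(x)$. Note that if $x$ is itself non-periodic, its orbit contains no periodic points. So the unique invariant probability is the uniform measure $m_{\omega(x)}$, which gives the formula for $\Phi$.

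For (a): invariant ergodic measures on $\mathbb{S}^1$ are exactly the measures $m_O$ over periodic orbits $O$. So unique ergodicity holds if and only if there is exactly one periodic orbit. If so, $\Phi$ is constant and hence continuous. If instead $T^q=\operatorname{id}$, every point is periodic. Then $\Phi(x)=\frac1q\sum_{j<q}\delta_{T^jx}$, which is continuous in $x$ because each $T^j$ is continuous. This proves the ``if'' direction of (b).

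The main step is (c), which also gives the ``only if'' direction of (b). Assume there are at least two periodic orbits and $T^q\neq\operatorname{id}$. Then $\Fix(T^q)$ is a proper closed subset of $\mathbb{S}^1$ containing at least $2q$ points. Choose a component $J=(a,b)$ of its complement. Then $a,b\in\Fix(T^q)$, and $T^q$ moves points of $J$ monotonically toward one endpoint, say $b$. Take $x_n\in J$ with $x_n\to a$. Each $x_n$ has $\omega(x_n)=\text{orbit}(b)$, while $\Phi(a)=m_{\text{orbit}(a)}$. Hence $\Phi$ is discontinuous at $a$ unless $\text{orbit}(a)=\text{orbit}(b)$.

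The obstacle is excluding $\text{orbit}(a)=\text{orbit}(b)$ for every choice of $J$. I would argue that the endpoints $a,b$ of a complementary arc are adjacent points of $\Fix(T^q)$. Since $T$ preserves the cyclic order on the orbit of $a$, which is a rotation by $p/q$ combinatorially, adjacent points of $\Fix(T^q)$ lie on the same orbit only when $\Fix(T^q)$ is a single orbit. The reason is that consecutive points of one orbit are separated by arcs each containing points of every other periodic orbit, by order preservation. Having at least two periodic orbits therefore forces $\text{orbit}(a)\neq\text{orbit}(b)$ for some, in fact every, gap $J$. This yields the discontinuity and completes (b) and (c).
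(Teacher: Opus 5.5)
Your proposal is correct and follows essentially the same route as the paper. Both take Katok's classification as a black box, use the monotone convergence of $T^q$ on each gap of $\{T^q y = y\}$ to get $\Phi(x)=m_{\omega(x)}$, and locate the discontinuity at an endpoint of a gap. Your interlacing argument also makes explicit a step the paper only asserts: the two endpoints of a gap lie on distinct periodic orbits, because $T$ permutes the complementary arcs of a periodic orbit transitively, so each arc contains exactly one point of any other periodic orbit.
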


\begin{proof}
The irrational case follows from \cite[Theorem~11.2.9]{katok1995}. Suppose that $\rho(T)=p/q$, with $p$ and $q$ coprime. Then every periodic orbit has period $q$ \cite[Proposition~11.1.5]{katok1995}. On each connected component $I$ of $\mathbb{S}^1\setminus\{T^q y = y\}$, the homeomorphism $T^q$ has no fixed point and moves every point in the
same direction. Thus, the forward $T^q$-orbit of each $x\in I$ converges to one endpoint of $I$, while the backward orbit converges to the other. See \cite[Proposition~11.2.2]{katok1995}. It follows that the forward orbit closure of $x$ supports exactly the uniform measure on its $\omega$-limit periodic orbit. This proves pointwise unique ergodicity and the formula
for $\Phi$.

Every invariant probability measure is supported on the periodic set. Thus $T$ is uniquely ergodic precisely when there is only one periodic orbit, in which case $\Phi$ is constant. If every point is periodic, then $T^q=\operatorname{id}_{\mathbb{S}^1}$ and
\[
\Phi(x)=\frac1q\sum_{j=0}^{q-1}\delta_{T^jx},
\]
which depends weak* continuously on $x$. In the remaining case, the periodic set contains at least two orbits and the complementary intervals contain nonperiodic points. The assignment of a point to its limiting periodic orbit jumps at a boundary between two periodic basins, so $\Phi$ is discontinuous.
\end{proof}

\begin{remark}
The choice of the acting semigroup is essential. Consider instead the bilateral action $\Z\acts\mathbb{S}^1$, $n\cdot x:=T^n x$. If $\rho(T)=p/q$ and a nonperiodic point lies in a component $I=(a,b)$ of $\mathbb{S}^1\setminus\{T^q y = y\}$ whose endpoints belong to distinct periodic orbits, then its bilateral orbit closure supports both $m_{\omega(a)}$ and $m_{\omega(b)}$. It is therefore not uniquely ergodic. Hence, the $\Z$-action generated by $T$ is pointwise uniquely ergodic if
and only if it is continuously pointwise ergodic, and this occurs precisely in the following cases:
\begin{enumerate}
\item $\rho(T)$ is irrational.
\item $\rho(T)=p/q$ and $T$ has exactly one periodic orbit.
\item $\rho(T)=p/q$ and $T^q=\operatorname{id}_{\mathbb{S}^1}$.
\end{enumerate}
\end{remark}

For $\alpha\in\R/\Z$, let $R_\alpha\colon\mathbb{S}^1\to\mathbb{S}^1$, $R_\alpha(x):=x+\alpha$, and denote by $\kappa_\alpha$ the Koopman representation of the $\Z$-action generated by $R_\alpha$. For rigid rotations, the forward and bilateral orbit closures coincide.

\subsubsection{Irrational rotations}
\label{ex:irrational-rotation}
Assume that $\alpha\notin\mathbb Q/\Z$. Then every orbit is dense and the unique invariant probability measure is Haar measure $m_{\mathbb{S}^1}$. Hence $\Phi(x)=m_{\mathbb{S}^1}$ for every $x\in\mathbb{S}^1$, and the ergodic quotient $\mathbb{S}^1/\Phi$ consists of a single point. The mean ergodic projection is $P_\alpha f =
\left(\int_{\mathbb{S}^1}f\,dm_{\mathbb{S}^1}\right)\mathbbm{1}$, and Theorem~\ref{thm:main1} gives
\[
\Fix(\kappa_\alpha)=\R\mathbbm{1} \qquad \text{and} \qquad \wCob(\kappa_\alpha)
=
\left\{
f\in C(\mathbb{S}^1):
\int_{\mathbb{S}^1}f\,dm_{\mathbb{S}^1}=0
\right\}.
\]

\subsubsection{Rational rotations}
\label{ex:rational-rotation}
Assume that $\alpha=\frac{p}{q}$, where $p$ and $q$ are coprime. Every orbit has exactly $q$ points and
\[
\Phi(x) = \frac{1}{q}\sum_{j=0}^{q-1}\delta_{x+j/q}.
\]
As $R_{p/q}^q = \operatorname{id}_{\mathbb{S}^1}$, Proposition~\ref{prop:circle-homeomorphisms} guarantees that the $\N$-action is continuously pointwise ergodic, and therefore the $\Z$-action as well. The map $m_q\colon\mathbb{S}^1\to\mathbb{S}^1$, $m_q(x):=qx$, is constant precisely on the fibers of $\Phi$ and induces a continuous homeomorphism $\overline{m}_q\colon
\mathbb{S}^1/\Phi \to \mathbb{S}^1$. The compact version of Theorem~\ref{thm:main-quotient} gives a homeomorphism $\widetilde\Phi\colon \mathbb{S}^1/\Phi \to E_1^+(\mathbb{S}^1,\Z)$. Therefore,
\[
E_1^+(\mathbb{S}^1,\Z)
\cong
\mathbb{S}^1/\Phi \cong \mathbb{S}^1.
\]

By Theorem~\ref{thm:main1}, the mean ergodic projection is
\[
P_\alpha f(x) = \int f \,d\Phi(x) = \frac{1}{q}\sum_{j=0}^{q-1}f\left(x+\frac jq\right),
\]
and
\[
\Fix(\kappa_\alpha) = \Fix(\kappa_{1/q})
=
\left\{
f\in C(\mathbb{S}^1):
f\left(x+\frac1q\right)=f(x)
\text{ for every }x\in\mathbb{S}^1
\right\}.
\]
Since the action is continuously pointwise ergodic, Theorem~\ref{thm:main1} gives $\wCob(\kappa_\alpha)=\ker P_\alpha$. In this case, this equality can be strengthened by removing the closure. Indeed, by  \cite[Lemma~3.1]{kocsard2013},
\[
\wCob(\kappa_\alpha)=\ker P_\alpha = \left\{
f\in C(\mathbb{S}^1):
\sum_{j=0}^{q-1}f\left(x+\frac jq\right)=0
\text{ for every }x\in\mathbb{S}^1
\right\} = \Cob(\kappa_\alpha).
\]

\begin{remark}
The same descriptions apply, through conjugacy, to every orientation-pre\-ser\-ving circle homeomorphism satisfying $T^q=\operatorname{id}_{\mathbb{S}^1}$.
\end{remark}

\subsection{Translations and escape to infinity}
\label{subsec:translations}

Let $S$ be a countable semigroup acting by proper maps on a separable locally compact metric space $X$. Assume that every orbit $Sx$ is closed and non-compact, and that each map $Sx\to Sx$, $y\mapsto sy$, is injective for every $s\in S$ and $x\in X$.

No orbit supports an invariant probability measure. Indeed, injectivity gives $\mu(\{sy\})=\mu(\{y\})$ for $s\in S$, $y\in Sx$. Since $Sy$ is infinite, every singleton must have zero mass, which contradicts countability of $Sx$ and $\mu(Sx)=1$. Thus every point escapes to infinity and $\Phi\equiv0$. Hence the action is continuously pointwise ergodic and vanishes at infinity. Its compactified ergodic map is constant, $\Phi_\infty\equiv\delta_\infty$, so the compactified ergodic quotient consists of a single point. Moreover, if $f\in\Fix(\kappa)$, then $f$ is constant on every orbit. Since each orbit is closed and non-compact, the condition $f\in C_0(X)$ forces this constant to be zero. Hence $\Fix(\kappa)=\{0\}$. By Theorem~\ref{thm:locally-compact-main}, $C_0(X)=\wCob(\kappa)$ and $M(X,S)=\{0\}$. If, in addition, $S$ is countable, discrete, bicancellative, and left
amenable, then Theorem~\ref{thm:main2} gives $A_{F_n}f\to 0$ uniformly on $X$ for every left F{\o}lner sequence $(F_n)_n$ and every $f\in C_0(X)$.

The basic examples are the translation actions $\Z\acts\Z$ and $\Z\acts\R$ defined by $n\cdot x:=x+n$, whose orbits are translates of $\Z$. More generally, an infinite countable bicancellative semigroup $S$ acts in this way on itself,
endowed with the discrete topology, by left translations. More generally, let $S$ be infinite, countable, discrete, and bicancellative, and let $S\acts Y$ be any action by proper maps. The diagonal action $s\cdot(y,t):=(sy,st)$ on $Y\times S$ has closed, discrete, non-compact orbits and therefore satisfies all the conclusions above. The projection $Y\times S\to Y$, $(y,t)\mapsto y$, is an equivariant factor map, but it is not proper. See Remark~\ref{remark:proper_factors}.

\subsection{A finite action without pointwise invariant measures}
\label{subsec:finite-example}

Let $X:=\{1,2,3,4\}$, and define $a,b\colon X\to X$ by $a(1)=a(2)=a(3)=1$, $a(4)=4$, and $b(1)=b(2)=2$, $b(3)=b(4)=4$. Let $S$ be the monoid generated by $a$ and $b$. Write $e_i:=\mathbbm{1}_{\{i\}}$. Since every element of $S$ fixes $4$, every coboundary vanishes at $4$, and hence $\Cob(\kappa) \subseteq \{f\in C(X):f(4)=0\}$. Conversely, $\kappa(b)e_1=0$, $\kappa(a)e_2=0$, $\kappa(a)e_3=0$, and therefore $e_1=e_1-\kappa(b)e_1$, $e_2=e_2-\kappa(a)e_2$, $e_3=e_3-\kappa(a)e_3$. Thus
\[
\Cob(\kappa) = \wCob(\kappa) = \{f\in C(X):f(4)=0\}.
\]
Moreover, $S3=X$, since $3$, $a3=1$, $b3=4$, and $b(a3)=2$ exhaust $X$. Hence every invariant function is constant, so $\Fix(\kappa)=\R\mathbbm{1}$. In consequence, as any $f \in C(X)$ can be expressed as $f = f(4)\mathbbm{1} + (f - f(4)\mathbbm{1})$,
\[
C(X) = \Fix(\kappa)\oplus\wCob(\kappa).
\]

On the other hand, $S1=S2=\{1,2\}$. If $\mu$ were an invariant probability measure on $\{1,2\}$, then $a_*\mu=\delta_1$ and $b_*\mu=\delta_2$, which is impossible. Thus the action is not pointwise invariant-measure admitting.

Finally, the full action has a unique invariant probability measure. Indeed, if $\mu\in M_1^+(X,S)$, then $\mu=a_*\mu$ implies $\operatorname{supp}(\mu)\subseteq\{1,4\}$, and $\mu=b_*\mu$ implies $\operatorname{supp}(\mu)\subseteq\{2,4\}$. Therefore, $\operatorname{supp}(\mu)\subseteq\{4\}$, and hence $M_1^+(X,S)=\{\delta_4\}$.

Thus the full action satisfies $C(X)=\Fix(\kappa)\oplus\wCob(\kappa)$ and has a unique invariant probability measure, but it is not pointwise invariant-measure admitting. This shows that the corresponding hypothesis in Theorem~\ref{thm:main1} cannot be omitted.

\subsection{A continuously pointwise ergodic system with discontinuous entropy}
\label{subsec:discontinuous-entropy}

Continuous pointwise ergodicity means that the unique invariant probability measure supported on the orbit closure of a point varies weak* continuously with the point. It is therefore natural to ask whether its measure-theoretic entropy must also vary continuously. The following example shows that this need not be the case, even for subshifts over a finite alphabet. Indeed, weak* continuity controls the integrals of continuous observables, while the entropy map for a finite alphabet subshift, although upper semicontinuous, need not be continuous \cite[Theorem~8.2]{Waltersbook}.

For a subshift $Y\subseteq A^{\Z}$, let $h_{\mathrm{top}}(Y)$ denote the topological entropy of the restricted shift $\sigma|_Y$, and, for $\nu\in M_1^+(Y,\Z)$, let $h_\nu(\sigma)$ denote the measure-theoretic entropy of $\sigma$ with respect to $\nu$.

Fix a strictly ergodic subshift $\Sigma\subseteq A^{\Z}$ with positive topological entropy. Such subshifts are known to exist (see, for example, \cite[Theorem~3]{bulatek1992}). Let $\mu$ be its unique invariant probability measure. Observe that, by the variational principle, $h_\mu(\sigma) > 0$.

We first construct periodic orbit subshifts converging to $\Sigma$ in the Hausdorff topology (see \cite{pavlov2023} for related constructions). For each $n \geq 1$, let $G_n$ denote the order-$n$ Rauzy graph associated with $\Sigma$. Its vertices are
the words in $\mathcal L_n(\Sigma)$, its edges are the words in $\mathcal L_{n+1}(\Sigma)$, and the incidence maps are given by the usual prefix and suffix maps (see \cite{rauzy1983}). Minimality of $\Sigma$ implies that this graph is strongly connected. Choose a closed directed walk traversing every edge, and let $x_n$ be the periodic point obtained by repeating the corresponding cyclic word. If $\Sigma_n:=\Z x_n$, then $\mathcal{L}_{n+1}(\Sigma_n)=\mathcal{L}_{n+1}(\Sigma)$. In consequence, $\Sigma_n\longrightarrow\Sigma$ in the Hausdorff topology. Since $\Sigma$ is not a periodic orbit, infinitely many distinct periodic orbits occur among the sets $\Sigma_n$. Passing to a subsequence and relabeling, we may therefore assume that the $\Sigma_n$ are pairwise disjoint. Moreover, none of the $\Sigma_n$ intersects $\Sigma$: otherwise $\Sigma$ would contain a periodic point, and minimality would force $\Sigma$ itself to be a finite periodic orbit, contradicting its positive topological entropy.

Set $X := \Sigma\cup\bigcup_{n\geq 1}\Sigma_n$. Since $\Sigma_n\to\Sigma$ in the Hausdorff topology, the set $X$ is compact. It is also shift-invariant, and hence is itself a subshift. Let $\mu_n$ be the unique invariant probability measure on the
periodic orbit $\Sigma_n$. We claim that $\mu_n \rightarrow\mu$ weak*. Indeed, every weak* cluster point $\nu$ of $(\mu_n)_n$ is
shift-invariant. Moreover, if $f\in C(A^{\Z})$ vanishes on $\Sigma$, by \cite[Maximum theorem, p.116]{berge1997}, then Hausdorff convergence gives
\[
\sup_{x\in\Sigma_n}|f(x)|\longrightarrow \sup_{x\in\Sigma}|f(x)| = 0,
\]
and hence $\int f\,d\nu=0$. Thus $\operatorname{supp}(\nu)\subseteq \Sigma$. Unique ergodicity of $\Sigma$ implies $\nu=\mu$.

The action $\Z\acts X$ is pointwise uniquely ergodic, with ergodic map
\[
\Phi(x)
=
\begin{cases}
\mu_n,&x\in\Sigma_n,\\
\mu,&x\in\Sigma.
\end{cases}
\]
The convergence $\mu_n\to\mu$, together with the fact that each $\Sigma_n$ is clopen in $X$, shows that $\Phi$ is weak* continuous. Thus the system is continuously pointwise ergodic. Let
\[
h\colon M_1^+(X,\Z)\longrightarrow[0,\infty),
\qquad
h(\nu):=h_\nu(\sigma),
\]
be the entropy map. Define the entropy function associated with the
ergodic map $\Phi$ by
\[
h_\Phi\colon X\longrightarrow[0,\infty),
\qquad
h_\Phi(x)=h_{\Phi(x)}(\sigma).
\]
Since $X$ is a subshift over a finite alphabet, the entropy map $h$ is upper semicontinuous in the weak* topology \cite[Theorem~8.2]{Waltersbook}. Since $\Phi$ is weak* continuous, it follows that $h_\Phi=h\circ\Phi$ is upper semicontinuous.

However, $h_\Phi$ is not continuous. Indeed, each $\mu_n$ is supported on a periodic orbit, and hence $h_{\mu_n}(\sigma)=0$. On the other hand, $h_\mu(\sigma) > 0$. For every $x\in\Sigma$, Hausdorff convergence allows us to choose $y_n\in\Sigma_n$ such that $y_n\to x$. For any such sequence,
\[
h_\Phi(y_n)=h_{\mu_n}(\sigma)=0,
\qquad\text{while}\qquad
h_\Phi(x)=h_\mu(\sigma)>0.
\]
Therefore, $h_\Phi$ is upper semicontinuous but not continuous.

\subsection{Topological emergence and the ergodic quotient}
\label{subsec:emergence}

Let $X$ be a compact metric space, let $S\acts X$ be continuously pointwise ergodic, and fix a metric $d_*$ on $M_1^+(X)$ compatible with the weak* topology. Since $E_1^+(X,S)=\Phi(X)$ and $\Phi$ is weak* continuous, the set $E_1^+(X,S)$ is compact.

Extending the definition of Berger and Bochi \cite[Definition~0.1]{berger2021} from a single continuous transformation to semigroup actions, define the \textbf{topological emergence} of $S\acts X$ at scale $\varepsilon>0$ by
\[
\mathcal E_{\mathrm{top}}(\varepsilon)
:=
N_{M_1^+(X)}
\left((E_1^+(X,S),d_*,\varepsilon\right)),
\]
where, for a subset $K$ of a metric space $(Z,d)$, $N_Z(K,d,\varepsilon)$ denotes the smallest number of $d$-balls of radius $\varepsilon$ in $Z$ whose union covers $K$.

\begin{corollary}
\label{cor:emergence-quotient}
Under the assumptions above,
\[
\dim_{\mathrm{top}}(X/\Phi)
\leq
\liminf_{\varepsilon\downarrow0}
\frac{\log\mathcal E_{\mathrm{top}}(\varepsilon)}
     {-\log\varepsilon},
\]
where $\dim_{\mathrm{top}}$ denotes the Lebesgue covering dimension.
\end{corollary}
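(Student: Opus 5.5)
The plan is to turn the topological statement about $X/\Phi$ into a metric statement about the compact set $E_1^+(X,S)\subseteq M_1^+(X)$, and then use the classical inequality between covering dimension and lower box-counting dimension.

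\textbf{Step 1: identify the quotient.} By the compact version of Theorem~\ref{thm:main-quotient} (see the remark following it), the induced map $\widetilde\Phi\colon X/\Phi\to E_1^+(X,S)$ is a homeomorphism. Since Lebesgue covering dimension is a topological invariant, this gives $\dim_{\mathrm{top}}(X/\Phi)=\dim_{\mathrm{top}}(E_1^+(X,S))$. Here $E_1^+(X,S)$ carries the weak* topology, which is induced by the metric $d_*$. It is a compact metric space because it equals $\Phi(X)$ with $\Phi$ weak* continuous.

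\textbf{Step 2: rewrite the emergence as a box count.} Set $K:=E_1^+(X,S)$. The quantity $\mathcal E_{\mathrm{top}}(\varepsilon)$ is the number of $\varepsilon$-balls centered in the ambient space $M_1^+(X)$ needed to cover $K$. This is comparable to the intrinsic covering number $N_K(K,d_*,\varepsilon)$. One direction is immediate: $N_{M_1^+(X)}(K,\varepsilon)\le N_K(K,\varepsilon)$. For the other, every ambient ball of radius $\varepsilon$ that meets $K$ lies inside a ball of radius $2\varepsilon$ centered at a point of $K$. Hence $N_K(K,2\varepsilon)\le \mathcal E_{\mathrm{top}}(\varepsilon)$. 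Since $\log(2\varepsilon)/\log\varepsilon\to1$, the right-hand side of the corollary equals the lower box-counting dimension $\underline{\dim}_B(K,d_*)$.

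\textbf{Step 3: compare dimensions.} It remains to show $\dim_{\mathrm{top}}K\le\underline{\dim}_B(K,d_*)$ for a compact metric space $K$. The standard chain is
\[
\dim_{\mathrm{top}}K\le\dim_H K\le\underline{\dim}_B K.
\]
The first inequality is Szpilrajn's theorem. The second holds because covers by $N(\varepsilon)$ sets of diameter $2\varepsilon$ make $\mathcal H^s$ finite (indeed zero) whenever $s>\underline{\dim}_B$. I would cite these as classical results, for instance from Hurewicz--Wallman or Falconer. If the lower box dimension is infinite, the statement is trivial.

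The main obstacle is this last step, specifically the inequality $\dim_{\mathrm{top}}\le\dim_H$. It is the only non-formal ingredient, so I would rely on the literature rather than reprove it. Everything else is bookkeeping around the homeomorphism $X/\Phi\cong E_1^+(X,S)$ and the ambient-versus-intrinsic covering numbers.
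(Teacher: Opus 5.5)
Your proposal is correct and follows the paper's proof. You identify $X/\Phi$ with $E_1^+(X,S)$ via the compact version of Theorem~\ref{thm:main-quotient}, then combine $\dim_{\mathrm{top}}\le\dim_{\mathrm H}\le\underline{\dim}_{\mathrm B}$ with the fact that ambient and intrinsic covering numbers give the same lower box dimension. The only difference is that you prove that last fact directly by the $2\varepsilon$ comparison, where the paper cites Berger--Bochi's Remark~1.2.
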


\begin{proof}
By the compact version of Theorem~\ref{thm:main-quotient}, $X/\Phi\cong E_1^+(X,S)$. Therefore,
\[
\dim_{\mathrm{top}}(X/\Phi)
=
\dim_{\mathrm{top}}\left((E_1^+(X,S)\right)).
\]
For every compact metric space $K$, the standard inequalities
\[
\dim_{\mathrm{top}}K \leq \dim_{\mathrm H}K \leq \underline{\dim}_{\mathrm B}K
\]
hold; see
\cite[Theorem~2.11 and Lemma~3.3(v)]{robinson2011}. Moreover, relative and intrinsic covering numbers determine the same lower box-counting dimension \cite[Remark~1.2]{berger2021}. Thus,
\[
\underline{\dim}_{\mathrm B}
\left((E_1^+(X,S),d_*\right))
=
\liminf_{\varepsilon\downarrow0}
\frac{\log\mathcal E_{\mathrm{top}}(\varepsilon)}
     {-\log\varepsilon},
\]
and the conclusion follows.
\end{proof}

\bibliographystyle{abbrv}
\bibliography{references}

\end{document}